\documentclass{amsart}
\usepackage{geometry}
\usepackage{amscd,amssymb,verbatim,xcolor}
\usepackage{longtable, multirow}
\usepackage{soul,cancel,tikz}
\setcounter{MaxMatrixCols}{20}

\date{\today}

\newcommand\C[1]{{\mathcal #1 }}

\newcommand\la{{\lambda}}

\newtheorem{theorem}{Theorem}[section]

\newtheorem{assumption}[theorem]{Assumption}
\newtheorem{corollary}[theorem]{Corollary}

\newtheorem{definition}[theorem]{Definition}
\newtheorem{example}[theorem]{Example}
\newtheorem{lemma}[theorem]{Lemma}
\newtheorem{proposition}[theorem]{Proposition}
\newtheorem{remark}[theorem]{Remark}

\begin{document}
\title{The genuine unitary dual of complex spin groups}
\author{Kayue Daniel Wong}

\address{School of Science and Engineering, The Chinese University of Hong Kong, Shenzhen, Guangdong 518172, China }
\email{kayue.wong@gmail.com}

\author{Hongfeng Zhang}

\address{Department of Mathematics, University of Hong Kong, China }
\email{zhanghongf@pku.edu.cn}

\begin{abstract}
In this paper, we give a classification of all irreducible, unitary representations of complex spin groups.
\end{abstract}

\maketitle
\setcounter{tocdepth}{1}


\section{Introduction}\label{sec:intro}
\subsection{The unitary dual} A major unsolved problem in representation theory of real reductive Lie groups $G$ is to classify all irreducible, unitarizable $(\mathfrak{g},K)$-modules, i.e. the unitary dual $\widehat{G}$. A good understanding $\widehat{G}$ would be useful for various problems in automorphic forms, harmonic analysis, noncommutative geometry and so on. 


\smallskip
It is conjectured that for real reductive groups groups, the representations in $\widehat{G}$ can be obtained from one of the following:
\begin{enumerate}
    \item[(i)] a finite set of unitary representations called \emph{unipotent representations}; or
    \item[(ii)] parabolically inducing a module consisting of unitary characters and a unipotent representation; or
    \item[(iii)] some continuous deformations of the parabolically induced modules obtained in (ii), i.e. the \emph{complementary series}.
\end{enumerate}

Following the above conjecture, we classify all irreducible unitary, genuine representations of $Spin(m,\mathbb{C})$ treated as a real Lie group, i.e. representations that do not factor through $SO(m,\mathbb{C})$. Combined with the results of \cite{B89}, we obtain a full description of the unitary dual of $Spin(m,\mathbb{C})$.

\subsection{Unipotent representations} \label{subsec-unip}
As seen above, unipotent representations play an important role in the classification of $\widehat{G}$. 
In this section, we recall some aspects of unipotent representations for real reductive Lie groups for archimedean local field $\mathbb{F}$.

\smallskip

Motivated by the study of automorphic forms, Arthur proposed that there should be a \emph{packet} of representations associated to the $^\vee G$-equivalence classes of homomorphisms
$$
\Phi:\C W_{\mathbb{F}}\times SL(2,\mathbb{C})\longrightarrow\ ^\vee G
$$
where $\C W_{\mathbb{F}}$ is the Weil group of $\mathbb{F}$, and $^\vee G$ is the $L$-group of $G$. These packets (called \emph{Arthur packets}) are conjectured to possess certain properties such as unitarity and twisted endoscopy.

\smallskip
Special attention is given to cases when $\Phi|_{\C W_{\mathbb{F}}} = triv$. Under such hypothesis, the corresponding Arthur packets are parametrized by the image of $SL(2,\mathbb{C})$, which are parametrized by the nilpotent orbits $\mathcal{O}^{\vee}$ of $^\vee \mathfrak{g}$. The irreducible representations appearing in these packets are called (weak) unipotent packets attached to $\mathcal{O}^\vee$. Furthermore, if the orbit $\mathcal{O}^\vee$ is special in the sense of Lusztig, the representations in these packets are called \emph{special unipotent representations} attached to $\mathcal{O}^\vee$.

\smallskip
For $\mathbb{F} = \mathbb{C}$, Barbasch-Vogan \cite{BV85} gave a detailed study of special unipotent representations. For instance, they proved that these representations satisfy the endoscopic properties as conjectured by Arthur and, in the case of classical groups, Barbasch \cite{B89} proved these representations are all unitary. As for $\mathbb{F} = \mathbb{R}$, Barbasch-Ma-Sun-Zhu \cite{BMSZ21}, \cite{BMSZ22} constructed all special unipotent representations for classical groups, and proved that all these representations are unitary. 

\smallskip
However, if one is interested in the classification of $\widehat{G}$, it is well-known that special unipotent representations are not enough to constitute the whole unitary dual. For instance, the Segal-Shale-Weil representation in $Sp(2n,\mathbb{C})$ is not in any weak unipotent packet, and is not parabolically induced from any other representations (hence it is non-Arthur). 
Therefore one needs to generalize to \emph{unipotent representations} in the study of $\widehat{G}$. Apart from their importance in the study of the unitary dual, it is believed that such generalizations may have implications on the study of automorphic forms for certain covering groups of $G$ over other local fields. For instance, at $\mathbb{F} = \mathbb{R}$, the Segal-Shale-Weil representations are genuine representations on the nonlinear metaplectic group $Mp(2n,\mathbb{R})$. They play an important role in the study of endoscopic transfer between virtual representations of $Mp(2n,\mathbb{R})$ and $SO(n,n+1)$ (c.f. \cite{A98}, \cite{R98}).  

\smallskip
Here are some progress in the study of unipotent representations for complex Lie groups: For classical groups, Barbasch \cite{B89} defined and constructed all unipotent representations, and proved that they are all unitary. As for spin groups, Brega \cite{Br99} constructed an interesting collection of genuine unitary representations
for $G = Spin(2n,\mathbb{C})$ (Remark \ref{rmk-brega}). Later on, Barbasch \cite[Section 7]{B17} generalized Brega's construction to $Spin(2n+1,\mathbb{C})$ (Remark \ref{rmk-inclusion}). In Appendix \ref{sec-unipotent}, we will show in full detail that the genuine representations of $Spin(2n,\mathbb{C})$ studied by Brega
are all unipotent representations in the sense of \cite[Section 2.3]{B17}. Its proof can be easily translated to the case of $Spin(2n+1,\mathbb{C})$.

\subsection{Main results}
Here is the main theorem of this paper:
\begin{theorem}[Theorem \ref{thm-bv} and Theorem \ref{Main_odd}] \label{thm-11}
All irreducible unitary, genuine representations
of $G = Spin(m,\mathbb{C})$ are the lowest $K$-type subquotients of parabolically induced modules, whose inducing modules consist of genuine unitary characters of $\widetilde{GL}(a,\mathbb{C})$, or Stein complementary series tensored with a genuine unitary character in $\widetilde{GL}(2b,\mathbb{C})$, or a genuine unipotent representation of the same type as $G$ with lower rank given by \cite{Br99} (for $m$ even) or \cite{B17} (for $m$ odd), whose Langlands parameters are given explicitly in Remark \ref{rmk-brega} (for $m$ even) and Remark \ref{rmk-unipb} (for $m$ odd).
\end{theorem}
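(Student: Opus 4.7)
The plan is to follow the strategy developed by Barbasch in \cite{B89} for the classical complex groups and extended in \cite{B17} to the spin setting, adapting the signature computation to the genuine case and handling the parity of $m$ separately, since the resulting Levi structures differ.

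First I would place an arbitrary irreducible genuine $(\mathfrak{g},K)$-module into the Langlands classification, writing it as the lowest $K$-type subquotient of a standard module $I(MAN,\sigma\otimes\nu)$ with $\sigma$ tempered on $M$ and $\nu\in\mathfrak{a}^{*}$ real. Because $G$ is complex, $\sigma$ is essentially a unitary character on each $GL$-type factor of $M$, so the inducing data is captured by a real parameter together with a lowest $K$-type. Using the well-known fact that unitary induction from a genuine unitary character of $\widetilde{GL}(a,\mathbb{C})$ preserves unitarity, and the analogous statement for Stein complementary series tensored with a genuine unitary character of $\widetilde{GL}(2b,\mathbb{C})$, I would strip off all such factors. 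This reduces the problem to proving that the remaining ``pure spin'' piece must be a genuine unipotent representation on a lower-rank group of the same type.

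The technical heart of the argument is the signature computation. I would form the Hermitian intertwining operator on the standard module, decompose it via the Knapp--Stein factorization into a product of simple-root operators, and track its signature on each relevant genuine $K$-type as $\nu$ deforms from the tempered endpoint. The unitary locus is cut out by a system of inequalities whose interior gives the complementary series and whose boundary is precisely where the form acquires new kernel. At every such boundary the Langlands quotient is forced to have small associated variety and Gelfand--Kirillov dimension, and I would identify it with one of the unipotent representations constructed by Brega \cite{Br99} for $m$ even and by Barbasch \cite{B17} for $m$ odd, matching the explicit Langlands parameters recorded in Remark \ref{rmk-brega} and Remark \ref{rmk-unipb}.

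The main obstacle I expect is the explicit signature bookkeeping on genuine $K$-types, which behaves differently from the non-genuine case because the relevant petite $K$-types of $Spin(m,\mathbb{C})$ are half-integral spin representations rather than tensors of the standard module. A secondary but nontrivial step is matching the resulting boundary representations with the Brega/Barbasch list, which requires the identification carried out in Appendix \ref{sec-unipotent} of Brega's representations with unipotent representations in the sense of \cite[Section 2.3]{B17}, and the analogous identification in the odd case. Because the even and odd cases diverge in the combinatorics of the Levi subgroups and the associated nilpotent orbit data, I would complete them separately, yielding Theorem \ref{thm-bv} and Theorem \ref{Main_odd}, and then combine them to obtain the unified statement.
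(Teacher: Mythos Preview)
Your outline captures the broad architecture correctly: reduce to real parameters, peel off the Type A factors via bottom-layer arguments, and then analyze the residual spin piece. But the heart of the paper's argument---the exhaustion/non-unitarity step---is not the continuous deformation you describe, and your proposed mechanism has a real gap.

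The paper does not track signatures along a path from the tempered point, nor does it identify boundary representations via associated variety or Gelfand--Kirillov dimension. Instead, after reducing to parameters $N_{1/2}\longleftrightarrow\begin{pmatrix}x_1&\dots&x_k\\y_1&\dots&y_k\end{pmatrix}$, the paper proves non-unitarity for each bad parameter by a \emph{multiplicity-matching} argument: it shows that $J(M_{1/2}\cup M_{1/2}^h,N_{1/2}\cup N_{1/2}^h)$ and a specific induced module $\mathrm{Ind}_{A\times D}^{D}(J_A\otimes J_D)$ (or $A\times B$ in the odd case) have the \emph{same} multiplicities on the spin-relevant $K$-types $\mathcal{V}_{\eta(i)}$. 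This is the technical core (Propositions \ref{single_mult}, \ref{two_string_mult}, \ref{prop-nonunit2}), and it is accomplished by a delicate factorization of the long intertwining operator into pieces $\omega\circ\zeta\circ\kappa$ and proving that $\omega$ is \emph{injective} on $\mathrm{im}(\zeta)$ restricted to the $\mathcal{V}_{\eta(i)}$-isotypics, using the explicit rank-one formulas of Section \ref{sec-inter} together with Propositions \ref{inj_v_x} and \ref{type_A_inj}. Once multiplicities match, the known indefiniteness of the Type A complementary series at a specific $\widetilde{U}$-type (Theorem \ref{thm-comp}) transfers directly to indefiniteness of $\pi$ at $\mathcal{V}_{\eta(i)}$. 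The general parameter is then handled by inducing up by Stein complementary series (Lemma \ref{lem-nonunit3}) until one lands in one of two base configurations, which in turn are reduced via further injectivity arguments to low-rank cases checked by \texttt{atlas}.

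Your proposal to detect non-unitarity by watching where the form ``acquires new kernel'' along a deformation does not supply this: without the multiplicity-matching, you cannot conclude that indefiniteness on the induced module survives to the irreducible subquotient, and a naive deformation argument will not control which $K$-types carry the sign change. Likewise, the claim that at the boundary the quotient has small associated variety and can be matched to the Brega/Barbasch list is backwards relative to the paper: the associated-variety identification (Appendix \ref{sec-unipotent}) is used only \emph{after} the unitary set is determined, to confirm that the Brega representations are unipotent in the sense of \cite{B17}; it plays no role in the exhaustion.
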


We end this section by offering some possible directions for further research. Firstly, if the inequalities of Remark \ref{rmk-brega} and Remark \ref{rmk-unipb} are strict, then the corresponding unipotent representations are {\it isolated} in the unitary dual $\widehat{G}$. As conjectured in \cite{T22}, we believe that these representations play an important role in the study of Arthur packets for (some covers of) classical groups over any local field.
Moreover, by some character formula calculations, one observes that certain linear combinations of the unitary representations in Theorem \ref{thm-11} 
satisfy some endoscopic identities 
similar to that of \cite[Corollary 12.4]{BV85}. One therefore hopes that this will shed some light on the study of automorphic forms and Arthur representations for nonlinear covering groups over other local fields. 
Finally, one observes that there is a {\it Shimura correspondence} on the unitary duals between $SO(2n,\mathbb{C})$ and $Spin(2n,\mathbb{C})$, along with the unitary duals between $Sp(2n,\mathbb{C})$ and $Spin(2n+1,\mathbb{C})$. Similar works of such correspondence can be found \cite{T96}, \cite{AH97} and, more recently, \cite{ABPTV07}. We will pursue along this direction in a forthcoming work.

\subsection{Structure of the manuscript}
In Section \ref{sec-prelim}, we recall some basic notions of irreducible, admissible $(\mathfrak{g},K)$-modules for complex group $G$ treated as a real group. A full proof of the theorem for $G=Spin(2n,\mathbb{C})$ is given in Section \ref{sec-mainsec} -- Section \ref{sec-nonunit}. Since it is clear from the results of \cite{Br99} and \cite{B17} that all representations appearing in the theorem are unitary (Section \ref{sec-unitary}), a substantial part of this paper is to prove that these representations exhaust $\widehat{G}$. To do so, we use \emph{bottom layer $K$-type} arguments introduced in Section \ref{sec-bottom} to reduce our study to representations whose lowest $K$-type is the spinor module. In Section \ref{sec-in} and \ref{sec-nonunit}, we
make heavy use of the intertwining operators studied
in \cite{D75} and more recently \cite{B10} to conclude that all representations with spinor lowest $K$-types that are not covered in
Theorem \ref{thm-11} must have an indefinite Hermitian form on some
\emph{spin-relevant $K$-types} (Definition \ref{def-relevant}). Since these $K$-types are always bottom layer,
this finishes the exhaustion proof. Finally, we will explain in Section \ref{sec-odd} how the proofs for $Spin(2n,\mathbb{C})$ can be modified to give the same result for $Spin(2n+1,\mathbb{C})$.

\subsection{Acknowledgements}
This work is supported by the National Natural Science Foundation of China (grant
no. 12371033) and Shenzhen Science and Technology Innovation Committee (grant no.
20220818094918001).

\section{Preliminaries} \label{sec-prelim}
\subsection{Representations for complex groups}
Let $G$ be a connected reductive complex Lie group viewed as a real Lie
group. Fix a maximal compact subgroup $K$, and a pair $(B,H=TA)$
where $B$ is a real Borel subgroup and $H$ is a $\theta-$stable Cartan
subgroup such that $T=B\cap H$, and $A$ the complement stabilized by $\theta$.
We write $\mathfrak{g}_0$, $\mathfrak{k}_0$, $\mathfrak{b}_0$, $\mathfrak{h}_0$,
$\mathfrak{t}_0$, $\mathfrak{a}_0$ as their respective Lie algebras, and their
complexifications are denoted by removing the subscript $\bullet_{0}$. Let $W$ denote the Weyl group $W(\mathfrak{g}_0, \mathfrak{h}_0)$.

The {\bf Langlands-Zhelobenko parameter} of any irreducible module is a pair $(\la_L;\la_R) \in \mathfrak{h}^* \cong \mathfrak{h}_0^* \times \mathfrak{h}_0^*$ such that $\mu:=\la_L-\la_R$ is the parameter of a character of $T$ in the
  decomposition of the $\theta-$stable
  Cartan subalgebra $H=T\cdot A,$ and $\nu:=\la_L+\la_R$ the
  $A-$character. Then the \textit{principal series representation}
  associated to $(\la_L;\la_R)$ is the $(\mathfrak{g}, K)-$module
$$
X(\lambda_L;\lambda_R) = X(\mu,\nu) = \mathrm{Ind}_B^G(e^{\mu}\otimes e^\nu \otimes
1)_{K-finite}
$$
with infinitesimal character $(\la_L;\la_R)$
(here the symbol $\mathrm{Ind}$ refers to normalized Harish-Chandra induction).

Let $J(\mu,\nu) = J(\la_L;\la_R)$ be the unique irreducible subquotient of
$X(\mu,\nu) = X(\la_L;\la_R)$ containing the lowest $K$-type $\mathcal{V}_{\{\mu\}}$ with extremal weight
$\mu=\la_L-\la_R$ (here $\{\omega\}$ is defined to the unique Weyl conjugate of $\omega$ such that
$\{\omega\}$ is dominant). This is called the {\bf Langlands subquotient}.

\begin{proposition}[Parthasarathy-Rao-Varadarajan,  Zhelobenko] \label{prop:BVprop}
Assume $G$ is a complex connected reductive group viewed as a real group, and
let $(\la_L;\la_R)$ and $(\la_L';\la_R') $ be parameters described above. Then the
following are equivalent:
\begin{itemize}
\item $(\la_L;\la_R)=(w\la_L';w\la_R')$ for some $w\in W.$
\item  $X(\la_L;\la_R)$ and $X(\lambda_L'; \lambda_R')$ have
  the same composition factors with the same multiplicities.
\item The Langlands subquotient of $X(\la_L;\la_R)$, written as
$J(\la_L;\la_R)$, is the same as that of $X (\lambda_L'; \lambda_R')$.
\end{itemize}
Furthermore, every irreducible $(\mathfrak{g}, K)$-module is equivalent to some $J(\la_L;\la_R)$.
\end{proposition}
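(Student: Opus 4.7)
The plan is to establish the cycle (a) $\Rightarrow$ (b) $\Rightarrow$ (c) $\Rightarrow$ (a), and then deduce the surjectivity clause separately. The easy direction (b) $\Rightarrow$ (c) is tautological: $J(\la_L;\la_R)$ is characterized as the unique composition factor of $X(\la_L;\la_R)$ whose $K$-spectrum contains $\mathcal{V}_{\{\mu\}}$ with extremal weight $\mu = \la_L - \la_R$, so agreement of composition factors with multiplicities forces agreement of the Langlands subquotients.

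For (a) $\Rightarrow$ (b), I would use the standard intertwining operators between principal series. For each simple reflection $s_\alpha$, the Knapp-Stein integral produces a nonzero $(\fg,K)$-module map $X(\la_L;\la_R) \to X(s_\alpha\la_L;s_\alpha\la_R)$, defined meromorphically in the parameters; concatenating along a reduced expression for $w \in W$ yields maps $X(\la_L;\la_R) \to X(w\la_L; w\la_R)$ in both directions whose composition is a scalar on each $K$-isotypic component. By Frobenius reciprocity the $K$-structure of $X(\la_L;\la_R)$ is $\mathrm{Ind}_T^K(\mu)$, which depends only on the $W$-orbit of $\mu$; together with the coincidence of central characters, this pins down the Jordan-H\"older content in the Grothendieck group.

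For (c) $\Rightarrow$ (a), identifying the two Langlands subquotients imposes two constraints. First, equality of infinitesimal characters, combined with the Harish-Chandra isomorphism $Z(\fg) \cong S(\fh)^{W \times W}$ valid for the complex group $G$, gives $(\la_L';\la_R') = (w_1 \la_L; w_2 \la_R)$ for some $(w_1,w_2) \in W \times W$. Second, equality of the extremal weights of the lowest $K$-types gives $\{\la_L - \la_R\} = \{w_1\la_L - w_2\la_R\}$. Promoting $(w_1,w_2)$ to a single diagonal $w \in W$ is equivalent to $w_2^{-1}w_1 \in W_{\la_R} W_{\la_L}$, where $W_\bullet$ denotes the Weyl stabilizer. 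I would first reduce to a dominant chamber configuration by a diagonal $W$-translation, and then extract the required stabilizer decomposition from the Parthasarathy-Rao-Varadarajan multiplicity-one statement for the extremal lowest $K$-type.

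Finally, the surjectivity clause follows from Harish-Chandra's subquotient theorem: every irreducible $(\fg,K)$-module occurs as a subquotient of some principal series, and among its composition factors the one containing the lowest $K$-type is of the form $J(\la_L;\la_R)$. The main obstacle, I expect, is the passage from $W \times W$-conjugacy to diagonal $W$-conjugacy in (c) $\Rightarrow$ (a); the intertwining-operator bookkeeping in (a) $\Rightarrow$ (b) is conceptually standard though technically lengthy, whereas the stabilizer analysis in (c) $\Rightarrow$ (a) is the step where the central-character data and the lowest-$K$-type data must be genuinely combined rather than used separately.
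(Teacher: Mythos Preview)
The paper does not prove this proposition; it is stated as a classical result attributed to Parthasarathy--Rao--Varadarajan and Zhelobenko, with no argument supplied. So there is no ``paper's own proof'' to compare against, and your proposal should be assessed on its own terms.

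Your cyclic strategy and the use of the subquotient theorem for surjectivity are the standard shape of the argument, and (b) $\Rightarrow$ (c) is indeed immediate. The two substantive steps, however, each have a gap.

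For (a) $\Rightarrow$ (b): the intertwining operators do give maps in both directions whose composites are scalars on each $K$-isotypic piece, but those scalars can vanish---this is precisely what happens at reducible parameters. Your fallback, that equal $K$-structure $\mathrm{Ind}_T^K(\mu)$ plus equal central character ``pins down the Jordan--H\"older content,'' is false as stated: $K$-spectrum and infinitesimal character do not jointly determine composition factors of a principal series. What actually finishes this implication is either the explicit Harish-Chandra character formula for $X(\lambda_L;\lambda_R)$ (which is visibly invariant under the diagonal $W$-action), or a deformation argument (generic isomorphism plus continuity of distribution characters). You need one of these.

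For (c) $\Rightarrow$ (a): you have correctly isolated the crux as showing $w_2^{-1}w_1 \in W_{\lambda_R}W_{\lambda_L}$, and you are right that infinitesimal character alone only gives the $W\times W$-orbit. But the phrase ``extract the required stabilizer decomposition from the PRV multiplicity-one statement'' does not constitute an argument. Multiplicity one of $\mathcal{V}_{\{\mu\}}$ in $X(\mu,\nu)$ gives you $\{\mu\}=\{\mu'\}$, which after the dominance reduction gives $\mu=\mu'$, i.e.\ $w_1\lambda_L - w_2\lambda_R = \lambda_L-\lambda_R$; it does \emph{not} by itself force the double-coset condition on $w_2^{-1}w_1$. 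The actual PRV/Zhelobenko argument uses more: either a careful analysis of leading exponents (asymptotics of matrix coefficients) to recover $\nu$ up to $W_\mu$, or the $\mathfrak{n}$-homology/Casselman--Osborne description of which $(\lambda_L;\lambda_R)$ can occur. You have correctly located the obstacle but not supplied the tool that overcomes it.
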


\subsection{Intertwining operators} \label{sec-intertwine1}
The irreducible subquotient $J(\la_L;\la_R) = J(\mu,\nu)$ of the principal series representations
$X(\la_L;\la_R) = X(\mu,\nu)$ in Proposition \ref{prop:BVprop}
can be constructed using intertwining operators as follows: Let $w \in W$ be a Weyl group element. Then there is a standard intertwining operator
\begin{equation} \label{eq-intertwine}
i_w(\mu,\nu): X (\mu, \nu) \to X (w \mu, w\nu),
\end{equation}
which can be normalized to be the identity on
the lowest $K$-type. It is a meromorphic function of $\nu$ defined almost everywhere.

Suppose $X(\mu,\nu)$ is such that
$Re(\nu)$ is dominant with respect to the roots in $B$. Then
then the image of $i_{w_0}(\mu,\nu)$ in \eqref{eq-intertwine}, where $w_0$ is the longest element in the
Weyl group, is equal to $J(w_0\mu,w_0\nu) \cong J(\mu,\nu)$.

In Section \ref{sec-in}, we will have a detailed study of $i_w(\mu,\nu)$ on some
isotypic subspaces of $X(\mu,\nu)$  in $Spin(2n,\mathbb{C})$ when the lowest $K$-type $\mathcal{V}_{\{\mu\}}$ is the spinor module.
This constitutes the main tool in determining the (non-)unitarity of $J(\mu,\nu)$'s.

\subsection{Hermitian dual} \label{sec-herm}
Let $\pi$ be an admissible $(\mathfrak{g},K)$-module. Its Hermitian dual $\pi^h$ (cf. \cite[Definition 2.10]{V84}) is the $(\mathfrak{g},K)$-module
such that there is a 
nondegenerate hermitian pairing $\langle , \rangle: \pi \times \pi^h \to \mathbb{C}$ satisfying:
\[\langle (U_1,U_2)\cdot x, f \rangle = \langle x, -(\overline{U_2},\overline{U_1})\cdot f \rangle, \quad \quad \langle k\cdot x, f \rangle = \langle x, k^{-1}\cdot f\rangle\]
for $x \in \pi$, $f \in \pi^h$, $(U_1,U_2) \in \mathfrak{g}_0 \oplus \mathfrak{g}_0 \cong \mathfrak{g}$ (here $\overline{U}$ is the complex conjugate with the complex structure of $\mathfrak{g}_0$), and $k \in K$. 
Most notably, $\pi$ has a non-degenerate invariant Hermitian form
if and only if $\pi \cong \pi^h$ (cf. \cite[Corollary 2.15]{V84}).

By \cite[Section 2.4]{B89}, one has $J(\mu,\nu)^h = J(\mu,-\overline{\nu})$
(the same holds also for principal series representations). From now on, we will write 
$$(\mu^h,\nu^h) := (\mu,-\overline{\nu})$$
for the Langlands parameters of the Hermitian dual.
Therefore, $J(\mu,\nu)$ has a non-degenerate invariant Hermitian form if and only if
there exists $w_h \in W$ such that
\begin{equation} \label{eq-herm}
w_h \mu = \mu^h, \quad \quad w_h \nu = \nu^h.
\end{equation}

Thanks to the work of Harish-Chandra, the problem of classifying the unitary, irreducible
$G$-modules is equivalent to the problem of classifying the irreducible $(\mathfrak{g}, K)$-modules having a positive definite invariant Hermitian form.
From now on, we call such $(\mathfrak{g}, K)$-modules {\bf unitarizable}, and the collection of all such modules is called the {\bf unitary dual} $\widehat{G}$ of $G$.
By our above discussions, one can only focus on the Langlands parameters satisfying \eqref{eq-herm} in the classification of $\widehat{G}$.


%
%
%
%


\subsection{Reduction to real parameters} \label{sec-real}
To further narrow down the parameters to be considered for the classification of $\widehat{G}$, one has the following `reduce to real parameters' theorem:
\begin{theorem}[\cite{B89}, Proposition 2.5]
Let $\pi \in \widehat{G}$ be a unitarizable $(\mathfrak{g},K)$-module. Then there exists a parabolic subgroup $P = LN \leq G$, a unitary representation $\pi_{re} \in \widehat{L}$ which has real infinitesimal character, and a unitary character $\chi \in \widehat{L}$ such that
$$\pi = \mathrm{Ind}_P^G\left((\pi_{re} \otimes \chi) \otimes {\bf 1}\right).$$
\end{theorem}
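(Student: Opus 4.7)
The plan is to decompose the $A$-parameter $\nu$ of $\pi = J(\mu,\nu)$ into its real and imaginary parts and use the imaginary part to construct the Levi $L$. By Section \ref{sec-herm}, unitarity of $\pi$ supplies $w_h \in W$ with $w_h\mu = \mu$ and $w_h\nu = -\overline{\nu}$. Writing $\nu = \nu_r + i\nu_i$ with $\nu_r,\nu_i$ real and comparing real and imaginary parts of $w_h\nu = -\overline{\nu}$, one obtains
\[
w_h\nu_r = -\nu_r, \qquad w_h\nu_i = \nu_i.
\]

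Let $L$ be the Levi subgroup of $G$ whose roots are exactly the roots of $\mathfrak{g}$ orthogonal to $\nu_i$, and let $P = LN$ be a corresponding parabolic (choosing the nilpotent direction so that $\nu_i$ is dominant). Then $i\nu_i$ is central in $\mathfrak{l}$, so $\chi := e^{i\nu_i}$ is a unitary character of $L$; and since $w_h$ fixes $\nu_i$, it lies in the Weyl group $W_L := W(\mathfrak{l}_0,\mathfrak{h}_0)$. Set $\pi_{re} := J_L(\mu,\nu_r)$, the Langlands subquotient of the $L$-principal series with parameter $(\mu,\nu_r)$. Because $\nu_r$ is real, $\pi_{re}$ has real infinitesimal character, and because $w_h\mu = \mu$ and $w_h\nu_r = -\nu_r = -\overline{\nu_r}$ with $w_h \in W_L$, the $L$-module $\pi_{re}$ admits an invariant Hermitian form.

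Next I would verify $\pi = \mathrm{Ind}_P^G(\pi_{re} \otimes \chi)$ by induction in stages for principal series, $X_G(\mu,\nu) = \mathrm{Ind}_P^G(X_L(\mu,\nu_r) \otimes \chi)$; because $L$ is exactly the centralizer of $\nu_i$, the $\chi$-twist keeps the induction irreducible on the Langlands subquotient, and the two sides match. To descend positive-definiteness from $\pi$ to $\pi_{re}$, I would exploit that the Hermitian form on $\pi$ is realized by the long intertwining operator $i_{w_h}(\mu,\nu)$ of Section \ref{sec-intertwine1}, and that since $w_h \in W_L$ this operator factors as $\mathrm{Ind}_P^G$ of the corresponding $L$-intertwining operator $i_{w_h}^L(\mu,\nu_r)$ (which defines the form on $\pi_{re}$), tensored with the identity on the unitary character $\chi$.

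The main obstacle is precisely this descent step. Unitary parabolic induction automatically preserves unitarity, so the easy direction is immediate; the content of the theorem is the converse, which succeeds here only because $w_h$ sits entirely inside $W_L$, forcing the long intertwining operator to split cleanly between the Levi and nilpotent directions rather than mixing them. Making the signature comparison precise requires pairing, via Frobenius reciprocity, the lowest $K$-types of $\mathrm{Ind}_P^G(\pi_{re}\otimes \chi)$ with the lowest $(L\cap K)$-types of $\pi_{re}$ and tracking that the two invariant forms agree up to a positive scalar on each such pair. Once this matching is in place, positive-definiteness of the form on $\pi$ forces positive-definiteness of the form on $\pi_{re}$, completing the reduction.
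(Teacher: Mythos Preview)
The paper does not give its own proof of this statement; it simply cites \cite[Proposition 2.5]{B89} and uses the result. So there is no paper proof to compare against directly. Your outline is essentially the standard argument that appears in \cite{B89}, and the overall architecture---split $\nu = \nu_r + i\nu_i$, take $L$ to be the centralizer of $\nu_i$, observe $w_h \in W_L$ since $w_h\nu_i = \nu_i$ and stabilizers in Weyl groups are parabolic---is correct.

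Two places deserve tightening. First, the irreducibility of $\mathrm{Ind}_P^G(J_L(\mu,\nu))$ does not follow merely from ``$L$ is the centralizer of $\nu_i$''; what you need is that for every $\alpha \in \Delta(\mathfrak{n})$ the pairing $\langle \mu+\nu, \check\alpha\rangle$ has nonzero imaginary part (since $\langle \nu_i,\check\alpha\rangle \neq 0$), hence lies outside $2\mathbb{Z}$. This is exactly the hypothesis of Theorem~\ref{KL-thm} in the paper, which then gives $J_G(\mu,\nu) = \mathrm{Ind}_P^G(J_L(\mu,\nu)) = \mathrm{Ind}_P^G(\pi_{re}\otimes\chi)$ on the nose.

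Second, your descent of positive-definiteness is more circuitous than necessary, and the final sentence about matching only \emph{lowest} $K$-types would not suffice---an indefinite form on $\pi_{re}$ could show up at a non-lowest $(L\cap K)$-type. The clean argument bypasses intertwining operators entirely: once the induced module is irreducible, its invariant Hermitian form is (up to positive scalar) the integrated form $\langle f,g\rangle = \int_K \langle f(k),g(k)\rangle_{\pi_{re}\otimes\chi}\,dk$ in the compact picture. This integral is positive definite if and only if the form on $\pi_{re}\otimes\chi$ is, since any $(L\cap K)$-type of negative signature in $\pi_{re}$ induces a $K$-type of negative signature in $\pi$ by Frobenius reciprocity. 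Your intertwining-operator factorization $i_{w_h}^G = \mathrm{Ind}_P^G(i_{w_h}^L)$ is also valid (because $w_h \in W_L$ has a reduced expression in $L$-simple reflections once $L$ is conjugated to be standard), but it is the long way around.
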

In view of the above theorem and \eqref{eq-herm}, one only considers irreducible modules
$J(\mu,\nu)$ such that
$\nu = \overline{\nu}$, and there exists an involution $s \in W$ such that $s\mu = \mu$ and $s\nu = -\nu$.

\section{The main theorem} \label{sec-mainsec}
From now on, we focus only on $G = Spin(2n,\mathbb{C})$. Fix a choice of positive simple roots given
by the Dynkin diagram:
\begin{center}
\begin{tikzpicture}
\draw
    (-01,0) node[circle,draw=black, fill=white,inner sep=0pt,minimum size=5pt,label=below:{\tiny $e_1 - e_2$}] {}
 -- (0,0) node[circle,draw=black, fill=white,inner sep=0pt,minimum size=5pt,label=above:{\tiny $e_2 - e_3$}] {};

\draw (1,0) node {$\dots$};


\draw
 (2,1) node[circle,draw=black, fill=white,inner sep=0pt,minimum size=5pt,label=above:{\tiny $e_{n-1} + e_n$}] {}
--  (2,0) node[circle,draw=black, fill=white,inner sep=0pt,minimum size=5pt,label=below:{\tiny $e_{n-2} - e_{n-1}$}] {}
 -- (3,0) node[circle,draw=black, fill=white,inner sep=0pt,minimum size=5pt,label=above:{\tiny $e_{n-1} - e_n$}] {};
 \draw[thin](0.1,0)--(0.4,0);
 \draw[thin](1.6,0)--(1.9,0);
\end{tikzpicture}\end{center} 

Let $J(\mu,\nu) = J(\lambda_L;\lambda_R)$ be an
irreducible, Hermitian $(\mathfrak{g},K)$-module with real infinitesimal character. By Proposition \ref{prop:BVprop},
one can conjugate $\mu$ and $\nu$ (resp. $\lambda_L$ and $\lambda_R$)
with some $w \in W$ such that $w\mu$ is dominant. By replacing $J(\mu,\nu)$ with $J(w\mu,w\nu)$, one may assume $\mu$ is dominant
with respect to the above choice of simple roots.

Suppose $\mu$ consists only of integer coefficients, then $J(\mu,\nu)$ can be descended to an
irreducible representation of $SO(2n,\mathbb{C})$.
Therefore, we only study $J(\mu,\nu)$
with $\mu$ equal to
\begin{equation} \label{eq-mugen}
\mu := (\dots; \mu^r; \dots; \mu^2; \mu^1), \quad \quad \text{where}\quad \mu^r = (\underset{m_r}{\underbrace{\frac{2r-1}{2},\dots ,\frac{2r-1}{2}}}).
\end{equation}
It is possible that the last $\frac{1}{2}$-coordinate of $\mu^1$ is equal $\frac{-1}{2}$. However, this
only differs from \eqref{eq-mugen} by an outer automorphism.


\subsection{Bottom-layer arguments} \label{sec-bottom}
The classification of $\widehat{G}$ can be effectively simplified by using bottom layer $K$-types as detailed in
\cite{KV95}. The special case of complex groups  is in Section 2.7 of \cite{B89}.

Suppose $J(\mu,\nu)$ is an irreducible Hermitian $(\mathfrak{g},K)$-module with real infinitesimal character,
and $\mu$ is of the form \eqref{eq-mugen}. Let 
$$L := \prod_{r \geq 1} L_r, \quad \quad L_r := \begin{cases} GL(m_r,\mathbb{C}) &\text{if}\ r > 1 \\ SO(2m_1,\mathbb{C}) &\text{if}\ r =1 \end{cases}$$
be a Levi subgroup of $SO(2n,\mathbb{C})$, and $\widetilde{L} := \mathrm{pr}^{-1}(L)$ be the Levi subgroup of $G$, where 
$\mathrm{pr}: G \to SO(2n,\mathbb{C})$ is the double covering map.
Let $\widetilde{L_r} := \mathrm{pr}^{-1}(L_r)$, then the $\widetilde{L_r}$'s commute with each other, and the irreducible genuine representations of $\widetilde{L}$
are given by tensor products of irreducible genuine representations of $\widetilde{L_r}$. 

Consider the induced module
\begin{equation} \label{eq:inclusion}
I := \mathrm{Ind}_{\prod_{r \geq 2} A_{m_r-1} \times D_{m_1}}^{D_n}\left(\bigotimes_{r\geq 2}
J_{A_{m_r-1}}(\mu^r,\nu^r) \otimes
J_{D_{m_1}}(\mu^1,\nu^1)\right)
\end{equation}
containing $J(\mu,\nu)$ as its lowest $K$-type subquotient (from now on, we only specify the information
on the Lie Type of the Levi subgroup for parabolic induction whenever there is no danger of confusion).
Then a {\bf bottom layer $K$-type} $\mathcal{V}_{\beta_r}$ has highest weight of the form $\beta_r =\mu+ \mu_{L_r}$, where
$\mu_{L_r}$ are some integer combination of the roots in $L_r$, so that $\beta_r$
is $K$-dominant. In such cases, the multiplicities and
signatures of the $K \cap L$-type $\mathcal{V}_{\beta_r|_{L}}$ in the inducing module of $I$ coincide with that of the $K$-type $\mathcal{V}_{\beta_r}$ in $J(\mu,\nu)$.

\smallskip
By Section 2.7 of \cite{B89}, some bottom layer $K$-types for $I$ are
obtained by:
\begin{itemize}
\item For $r \geq 2$, one can add $(1,\dots ,1,0,\dots ,0,-1,\dots , -1)$ (equal
number of $1$'s and $-1$'s)  to $\mu^r$ in $\mu$.
\item For $r = 1$, one can add $(1,\dots ,1,0,\dots ,0)$ (even number of $1$'s)
or $(1,\dots,1,0,\dots,0,-1)$ (odd number of of $1$'s) to $\mu^1$ in $\mu$.
\end{itemize}

\subsection{(Pseudo-)spherical unitary dual of $GL(n,\mathbb{C})$} \label{sec-pseudo}
In view of \eqref{eq:inclusion}, it is essential to determine the unitarity of
$J_{A_{m_r-1}}(\mu^r, \nu^r)$, whose lowest $K = \widetilde{U}(m_r)$-type has highest weight
equal to $(\frac{2r-1}{2}, \dots,\frac{2r-1}{2})$. These representations are {\it pseudo-spherical} in the sense
of \cite{V86}, since they are all equal to the irreducible spherical representation
$J_{A_{m_r-1}}\left(0,\nu^r \right)$ tensored with
a unitary character $(\frac{\det}{|\det|})^{\frac{2r-1}{2}}$. As a consequence, one only needs to understand the spherical unitary dual of $GL(n,\mathbb{C})$, which we recall below.

\medskip
Let $J_{A_{n-1}}(0,\nu)$ to denote the irreducible subquotient of $GL(n,\mathbb{C})$ with Langlands parameter $(\lambda_L;\lambda_R)=(\frac{\nu}{2};\frac{\nu}{2})$. We describe all $\nu$ such that $J_{A_{n-1}}(0,\nu)$ is unitarizable. By the discussions in Sections \ref{sec-herm} and \ref{sec-real}, one may assume that $\nu$ is real, and there exists $w\in W(A_{n-1})$ such that $w\nu=-\nu$.

Let $\nu_1=(\nu_{1,1},\dots,\nu_{1,n_1})$ be the longest subsequence of $\nu$ such that
\[\nu_{1,j}-\nu_{1,j+1}\in 2\mathbb{N}\backslash \{0\} \quad \text{for all} \quad 1\leq j\leq n_1-1.\]
Let $\nu_2=(\nu_{2,1},\dots,\nu_{2,n_2})$ be the longest subsequence of $\nu\setminus \nu_1$ such  that $\nu_{2,j}-\nu_{2,j+1}\in 2\mathbb{N}\backslash \{0\}$ for any $1\leq j\leq n_2-1$. Repeating the process on the remaining terms, we get
$$\nu=(\nu_1; \dots; \nu_m)$$
with $\nu_i=(\nu_{i,1},\dots,\nu_{i,n_i})\in \mathbb{R}^{n_i}$ such that $\nu_{i,j}-\nu_{i,j+1}\in  2\mathbb{N}\backslash \{0\}$ for any $1\leq j\leq n_i-1$.

\medskip
The following proposition is useful to understand the structure of $J_{A_{n-1}}(0,\nu)$.
\begin{proposition}[\cite{V86}, Proposition 12.2]
Let $J_i=J_{A_{n_i-1}}(0,\nu_i)$, then
\begin{equation} \label{eq-asph}
J_{A_{n-1}}(0,\nu)\cong \mathrm{Ind}_{\prod_{i=1}^m {A_{n_i-1}}}^{A_{n-1}}\left(\bigotimes_{i=1}^m J_i\right).
\end{equation}
\end{proposition}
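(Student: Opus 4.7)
The strategy is to realize both sides of \eqref{eq-asph} as images of intertwining operators on the principal series $X_{A_{n-1}}(0,\nu)$, and then use the maximality of the chains $\nu_i$ to show that no cross-block reducibility remains.

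After reordering $\nu$ by a Weyl element (which by Proposition \ref{prop:BVprop} does not affect $J_{A_{n-1}}(0,\nu)$), I may assume that the coordinates of $\nu_1$ come first, then those of $\nu_2$, and so on. Let $L = \prod_i A_{n_i-1}$ be the resulting standard Levi, let $w_0\in W(A_{n-1})$ be the longest element, and let $w_L$ be the long element of $W(L)$. Writing $w_0 = w' w_L$, the composition property from Section \ref{sec-intertwine1} gives
\[
i_{w_0}(0,\nu) \;=\; i_{w'}(0,w_L\nu)\circ i_{w_L}(0,\nu).
\]
Applying induction by stages together with the characterization of $J_i$ as the image of the long intertwining operator on $X_{A_{n_i-1}}(0,\nu_i)$ (cf.\ Section \ref{sec-intertwine1}), one has $\mathrm{Image}(i_{w_L}(0,\nu)) = \mathrm{Ind}_L^{A_{n-1}}(\bigotimes_i J_i)$, while $\mathrm{Image}(i_{w_0}(0,\nu)) = J_{A_{n-1}}(0,\nu)$. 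Hence it suffices to prove that $i_{w'}$ is a bijection on the induced module.

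Next I would decompose $i_{w'}$ into rank-one operators $i_{s_\alpha}$, one for each positive root $\alpha = e_p - e_q$ inverted by $w'$; every such $\alpha$ is \emph{not} a root of $L$, so $e_p$ and $e_q$ sit in different blocks $\nu_i \neq \nu_j$. The rank-one analysis in Section \ref{sec-intertwine1} (cf.\ \cite{D75}, \cite{B10}) shows that $i_{s_\alpha}$ is invertible on the ambient principal series whenever $\langle \nu,\alpha^\vee\rangle = \nu_p - \nu_q$ avoids the reducibility locus, which for complex $GL$ consists of the nonzero even integers.

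The maximality of the chains now supplies precisely this non-integrality. All coordinates within a single chain $\nu_i$ have pairwise differences in $2\mathbb{Z}$; hence if some $b\in \nu_j$ satisfied $b - a \in 2\mathbb{Z}\setminus\{0\}$ for one $a \in \nu_i$, the same would hold for \emph{every} $a \in \nu_i$, and $b$ could be inserted into or adjoined to $\nu_i$ so as to form a strictly longer such chain, contradicting the maximal choice of $\nu_i$. Consequently no cross-block root lies in the reducibility locus, so $i_{w'}$ is bijective and
\[
J_{A_{n-1}}(0,\nu) \;=\; \mathrm{Image}(i_{w_0}) \;\cong\; \mathrm{Ind}_L^{A_{n-1}}\Bigl(\bigotimes_{i=1}^m J_i\Bigr),
\]
which is \eqref{eq-asph}. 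The main subtlety lies in repeated coordinates of $\nu$: two equal entries may be split between different blocks, producing a cross-block root with $\nu_p - \nu_q = 0$. Since $0$ is not a reducibility value, this boundary case is harmless, but must be addressed with an explicit remark rather than swept under the general argument.
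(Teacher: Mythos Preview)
The paper does not give its own proof of this proposition; it simply cites \cite[Proposition 12.2]{V86}. So there is nothing to compare against, and the question is only whether your argument is complete.

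Your approach is reasonable, but the maximality step has a genuine gap in the presence of repeated coordinates. You claim that if $b\in\nu_j$ and $a\in\nu_i$ satisfy $b-a\in 2\mathbb{Z}\setminus\{0\}$, then $b$ can be adjoined to $\nu_i$ to produce a strictly longer chain. This fails when $b$ already \emph{equals} some other entry of $\nu_i$. Take $\nu=(3,1,1)$: the construction gives $\nu_1=(3,1)$ and $\nu_2=(1)$, and the cross-block pair $(3,1)$ between the first entry of $\nu_1$ and the entry of $\nu_2$ has difference $2\in 2\mathbb{Z}\setminus\{0\}$. You cannot extend $\nu_1$ to $(3,1,1)$ because the construction requires consecutive differences in $2\mathbb{N}\setminus\{0\}$, and the last gap would be $0$. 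So your ``$b$ can be inserted'' claim is false, and the rank-one operator $i_{s_\alpha}$ with $\langle\nu,\check\alpha\rangle=2$ genuinely has a kernel on the full principal series $X(0,\nu)$.

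What rescues the result is that you only need $i_{w'}$ to be injective on the \emph{induced submodule} $\mathrm{Ind}_L^{A_{n-1}}(\bigotimes_i J_i)$, not on all of $X(0,\nu)$; this is exactly the distinction exploited in Propositions \ref{inj_v_x} and \ref{type_A_inj} later in the paper. The clean way to handle it is Vogan's original segment argument: the greedy construction forces that for $i<j$ the segment corresponding to $\nu_j$ is either disjoint (in the $2\mathbb{Z}$-coset sense) from or \emph{contained in} the segment for $\nu_i$, hence never linked, so the induced module is irreducible. Your write-up should either invoke that containment property directly, or carry out the restricted-injectivity analysis on the induced module rather than appealing to invertibility on the ambient principal series.
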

Since we assume $J_{A_{n-1}}(0,\nu)$ has an invariant Hermitian structure, there are two possibilities for the $J_i=J_{A_{n_i-1}}(0,\nu_i)$'s appearing above:
\begin{itemize}
\item[(a)] $J_i$ is Hermitian, i.e. $\nu_i = - w_i\nu_i$ for some $w_i \in W(A_{n_i-1})$; or
\item[(b)] $J_i$ is not Hermitian, and there is a corresponding $J_j$ equal to its Hermitian dual $J_i^h$. In other words, there exists
$w_i \in W(A_{n_i-1}) = W(A_{n_j-1})$ such that $\nu_i = -w_i\nu_j$.
\end{itemize}
\begin{lemma}[\cite{V86}, Lemma 12.6] \label{lem-dirac}
Retain the above setting. If there exists $J_k = J(0,\nu_k)$ such that it is not one-dimensional, then
$J_{A_{n-1}}(0,\nu)$ is not unitary. More precisely, the Hermitian form of $J_{A_{n-1}}(0,\nu)$ is indefinite at the trivial $K$-type and the adjoint $K$-type with highest weight $(1,0,\dots,0,-1)$.
\end{lemma}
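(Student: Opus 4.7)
The approach is to combine the induced structure \eqref{eq-asph} with a bottom-layer argument that reduces the problem to a single non-one-dimensional block, followed by an explicit intertwining operator computation on the adjoint $K$-type.

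First, I would verify that both the trivial $K=U(n)$-type and the adjoint $K$-type $\mathcal{V}_{(1,0,\dots,0,-1)}$ are bottom-layer $K$-types for the induction in \eqref{eq-asph}. The trivial one is immediate, being the lowest $K$-type. For the adjoint one, if block $k$ occupies coordinates $\{s_{k,1},\dots,s_{k,n_k}\}$ in $\{1,\dots,n\}$, the weight $e_{s_{k,1}}-e_{s_{k,n_k}}$ lies in the root lattice of $L_k=GL(n_k,\mathbb{C})$ and its $K$-dominant Weyl conjugate is $(1,0,\dots,0,-1)$. The bottom-layer principle (Section 2.7 of \cite{B89}) then identifies the restriction of the invariant Hermitian form of $J_{A_{n-1}}(0,\nu)$ to the trivial and adjoint $K$-isotypic components with the restriction of the form of the inducing module $\bigotimes_i J_i$ to the corresponding trivial and $K\cap L_k$-adjoint components. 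Since this latter form factorizes as a tensor product, it suffices to treat the single-block case; a case (b) block $J_k$ that is not individually Hermitian can be handled by the same argument applied to the concatenation $J_k\otimes J_k^h$ viewed as an induced $GL(2n_k,\mathbb{C})$-module.

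Next, I would note the elementary fact that, for $n_k\ge 2$, $J(0,\nu_k)$ is one-dimensional exactly when all consecutive differences in $\nu_k$ equal $2$ (so that $J(0,\nu_k)$ is the trivial representation of $GL(n_k,\mathbb{C})$). Hence under the non-one-dimensionality hypothesis there is an adjacent pair $\nu_{k,j},\nu_{k,j+1}$ with $\nu_{k,j}-\nu_{k,j+1}\ge 4$. The heart of the argument is to evaluate the long intertwining operator $i_{w_0}(0,\nu_k)$ restricted to the $(n_k-1)$-dimensional adjoint $U(n_k)$-isotypic component of $X(0,\nu_k)$. After normalizing so that $i_{w_0}(0,\nu_k)$ acts as $+1$ on the one-dimensional trivial $K$-isotypic component, the rank-one factorization of $i_{w_0}(0,\nu_k)$ developed in \cite{D75} and refined in \cite{B10} yields eigenvalues on the adjoint isotypic component given by products of rank-one factors of the form
$$\frac{1-\tfrac{1}{2}(\nu_k,\alpha^\vee)}{1+\tfrac{1}{2}(\nu_k,\alpha^\vee)}$$
over positive roots $\alpha$. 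For the simple root $\alpha=e_j-e_{j+1}$ realizing the gap exceeding $2$, one has $(\nu_k,\alpha^\vee)=\nu_{k,j}-\nu_{k,j+1}\ge 4$, so this factor is strictly negative, producing a negative eigenvalue of the Hermitian form on the adjoint isotypic component and hence the required indefiniteness.

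I expect the main obstacle to be the careful bookkeeping of the rank-one factorization on the $(n_k-1)$-dimensional adjoint isotypic space and the identification of the eigenvector on which the sign-changing factor acts. A convenient simplification is to perform a further bottom-layer reduction of the single block down to the pair of coordinates realizing the gap $\ge 4$, localizing the computation inside a $GL(2,\mathbb{C})$ Levi factor, where the adjoint isotypic component is one-dimensional and the negativity of the intertwining eigenvalue is an immediate computation.
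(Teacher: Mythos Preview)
The paper does not supply its own proof of this lemma; it simply cites \cite{V86}. Your overall strategy---reduce via the induced description \eqref{eq-asph} to a single block and then compute the long intertwining operator on the adjoint $K$-type---is reasonable and in the spirit of the paper's methods, but the ``convenient simplification'' of a further bottom-layer reduction to a $GL(2,\mathbb{C})$ Levi has a genuine gap.

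The adjoint $K$-type $(1,0,\dots,0,-1)=e_1-e_{n_k}$ of $GL(n_k)$ is never bottom-layer for a proper Levi: for any standard Levi $L=\prod L_r$ with contiguous blocks, $e_1-e_{n_k}$ has block-sums $+1$ in the first block and $-1$ in the last, hence lies in the root lattice of a single factor $L_r$ only when $L_r=GL(n_k)$. Your first reduction from $GL(n)$ to the block $GL(n_k)$ in fact works not by bottom-layer but because \eqref{eq-asph} is an \emph{equality} of irreducible modules, so the induced Hermitian form on every $K$-type is computed from $\bigotimes_i J_i$ via Frobenius reciprocity. No such irreducibility is available at the second step: when $\nu_k$ has a gap $\ge 4$, $J(0,\nu_k)$ is a proper subquotient of any induction from a Levi isolating the coordinates $(j,j+1)$, so neither the bottom-layer criterion nor the irreducible-induction trick transfers the signature down to $GL(2)$. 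The direct computation you sketch on the $(n_k-1)$-dimensional zero-weight space of $\mathfrak{sl}(n_k)$ is the correct path, but your claim that a single negative rank-one factor ``produces a negative eigenvalue'' is not yet justified: $\phi_{w_0}$ on that space is an honest $(n_k-1)\times(n_k-1)$ matrix built from non-commuting pieces $\phi_{s_\alpha}$, and you must either exhibit an explicit eigenvector carrying the negative sign or run a deformation argument from the tempered point tracking where the first eigenvalue changes sign.
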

Therefore, one must have $\nu_{i,j} - \nu_{i,j+1} = 2$ for all $i$ and $j$. In particular, the only possibility for the Hermitian $J_i$ in Case (a) above is when $J_i$ is the trivial module. As for Case (b), we are left to study the unitarity of the following:
\begin{definition} \label{def-comp}
Given $a\in \mathbb{N}_+$ and $t\in \mathbb{R}$, let $\nu_{a,t} := (a-1+t, a-3+t, \dots, -a+3+t, -a+1+t)$. For $r \in \frac{1}{2}\mathbb{N}$, let 
\begin{equation} \label{eq-compat}
\mathrm{comp}_{r}(a,t) := J((r,\dots,r), \nu_{a,t}) = J(0, \nu_{a,t}) \otimes (\frac{\det}{|\det|})^{r}.
\end{equation}
Then the $t$-complementary series of $GL(2a,\mathbb{C})$ is given by
$$\mathrm{Ind}_{A_{a-1} \times A_{a-1}}^{A_{2a-1}}\left( \mathrm{comp}_0(a,t) \otimes \mathrm{comp}_0(a,t)^h\right).$$
\end{definition}
By the discussions above and induction in stages, the module $J(0,\nu)$ in \eqref{eq-asph} is unitary implies that
it is parabolically induced from a tensor product of trivial modules and $t$-complementary series. The theorem below determines
precisely when the $t$-complementary series is unitary:
\begin{theorem}[\cite{V86}, Lemma 12.12] \label{thm-comp}
Using the setting of Definition \ref{def-comp}, the $t$-complementary series is unitary if and only if $|t| < 1$ (Stein's complementary series). Otherwise,
\begin{itemize}
\item[(a)] If $|t|$ is a positive integer, then the $t$-complementary series is reducible. Indeed, the parameters $\nu_{a,t} \cup (-\nu_{a,t})$ can be rearranged into sequences of unequal lengths; or
\item[(b)] If $q<|t|<q+1$ for some integer $1\leq q\leq a$, then the Hermitian form of the
$t$-complementary series is indefinite at the trivial $K$-type and the $K$-type with highest weight $(\underbrace{1,\dots,1}_{a-q+1},0,\dots,0,\underbrace{-1,\dots,-1}_{a-q+1})$.
\item[(c)] If $|t| > a+1$, then the Hermitian form of the $t$-complementary series is indefinite at the trivial $K$-type and the adjoint $K$-type.
\end{itemize}
\end{theorem}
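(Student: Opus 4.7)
The plan is to study the invariant Hermitian form on
$$\pi_t := \mathrm{Ind}_{A_{a-1}\times A_{a-1}}^{A_{2a-1}}\bigl(\mathrm{comp}_0(a,t)\otimes \mathrm{comp}_0(a,t)^h\bigr)$$
via the long intertwining operator $i_{w_0}(0,\nu_t)$ with $\nu_t = (\nu_{a,t},-\nu_{a,t})$. After normalizing $i_{w_0}$ to be the identity on the trivial (lowest) $K$-type, its scalar eigenvalue on any other $K$-isotypic component computes the signature of the form there up to a positive factor; the theorem thus reduces to a sign analysis of these scalars as functions of $t$.

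For $|t| < 1$ I would invoke Stein's classical complementary series result for $GL(2a,\bC)$, or equivalently deform from the known unitary point $t = 0$, where $\pi_0$ is induced from the trivial representation of the $(a,a)$-parabolic. Since the signature on each $K$-type varies analytically in $t$ and can only degenerate when $\pi_t$ becomes reducible, positive definiteness propagates throughout the open interval provided no reducibility point is crossed; this is what the next step confirms.

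For part (a) I would combine the arithmetic progressions $\nu_{a,t}$ and $-\nu_{a,t}$, both with common difference $2$, into a single multiset and show that when $|t| = k \in \bZ_{>0}$ this multiset rearranges into maximal step-$2$ strings of unequal lengths (sizes $a+k$ and $a-k$ for $k\le a$). The standard Zelevinsky-type reducibility criterion for $GL(n,\bC)$ principal series induced from strings then forces $\pi_t$ to be reducible. For parts (b) and (c) I would compute the ratio of the form on the designated $K$-type to its value on the trivial one. Writing $w_0$ as a product of simple reflections and factoring $i_{w_0}$ accordingly, this ratio reduces to an explicit product of rank-one intertwining factors of the shape $(\nu_i - \nu_j + 1)/(\nu_i - \nu_j - 1)$ arising from embedded $SL(2,\bC)$ subquotients. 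The task is then to verify that the resulting expression is negative precisely for $q < |t| < q+1$ on the $K$-type of highest weight $(1^{a-q+1},0^{2q-2},-1^{a-q+1})$, and for $|t|>a+1$ on the adjoint $K$-type.

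The main obstacle is the bookkeeping in the last step: one must pick a reduced expression of $w_0$ adapted to the target $K$-type, track its multiplicity inside $\pi_t$ via Frobenius reciprocity and the $U(2a)\downarrow U(a)\times U(a)$ branching rule, and then simplify the product of factors into a closed form whose sign changes at each integer $|t| = q$ are manifest. The expected simplification is that most factors cancel in pairs across the two copies $\mathrm{comp}_0(a,t)$ and $\mathrm{comp}_0(a,t)^h$, leaving a short product that transparently realizes the claimed sign pattern.
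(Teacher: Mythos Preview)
The paper does not supply its own proof of this statement: Theorem~\ref{thm-comp} is quoted verbatim from \cite[Lemma~12.12]{V86} and used as a black box throughout. So there is no in-paper argument to compare against; your proposal is in effect a sketch of Vogan's original proof, and the overall strategy you describe---normalize the long intertwining operator on the trivial $K$-type, track reducibility at integer $|t|$, and read off the sign on designated $K$-types via a product of rank-one $SL(2,\bC)$ factors---is exactly the method of \cite{V86}.

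Two small corrections to your sketch. First, for complex groups the rank-one factor on a nontrivial spherical $K$-type has the shape $\dfrac{2-\langle\check\alpha,\nu\rangle}{2+\langle\check\alpha,\nu\rangle}$ (cf.\ \eqref{phi_++} in this paper), so with $\alpha=e_i-e_j$ the factor is $\dfrac{2-(\nu_i-\nu_j)}{2+(\nu_i-\nu_j)}$ rather than $\dfrac{\nu_i-\nu_j+1}{\nu_i-\nu_j-1}$; this affects the bookkeeping but not the method. Second, your deformation argument for $|t|<1$ needs the observation that $\pi_t$ is irreducible on $0\le |t|<1$, which is precisely what your string-rearrangement analysis in part~(a) gives (the first reducibility point is $|t|=1$); you have the ingredients but should state the logical dependence explicitly. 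Beyond these points your plan is sound and matches the cited source.
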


%
%

\subsection{A unitarizability criterion}
With the knowledge of $GL(n,\mathbb{C})$ and the bottom-layer argument, we can give a necessary condition for an irreducible Hermitian $(\mathfrak{g},K)$-module $J(\mu,\nu)$ to be unitarizable. By the discussions in Section \ref{sec-herm}, there exists $w_r\in W(D_{m_r})$ such that $w_r\mu^r=w_r\mu^r$ and $w_r\nu^r=-\nu^r$ for any $r \geq 1$.

\begin{proposition}
Let $J(\mu,\nu)$ be an irreducible Hermitian $(\mathfrak{g},K)$-module, and $I$ be the induced module \eqref{eq:inclusion}
corresponding to $J(\mu,\nu)$. If $J_{A_{m_r-1}}(\mu^r,\nu^r)$ is not unitary for some $r \geq 2$, then so is $J(\mu,\nu)$.
\end{proposition}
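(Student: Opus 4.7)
The plan is to apply the bottom-layer $K$-type argument of Section \ref{sec-bottom} to transport the non-unitarity of the $GL$-factor $J_{A_{m_r-1}}(\mu^r,\nu^r)$ up to $J(\mu,\nu)$. Since by Section \ref{sec-pseudo} the representation $J_{A_{m_r-1}}(\mu^r,\nu^r)$ differs from its spherical counterpart $J_{A_{m_r-1}}(0,\nu^r)$ only by a unitary character twist, the hypothesis is equivalent to $J_{A_{m_r-1}}(0,\nu^r)$ being non-unitary.

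The next step is to read off from Lemma \ref{lem-dirac} and Theorem \ref{thm-comp} an explicit pair of $\widetilde{U}(m_r)$-types on which the Hermitian form of $J_{A_{m_r-1}}(0,\nu^r)$ has opposite signs. Going through the three cases of Theorem \ref{thm-comp}, one sees that in each of them, as well as in the non-trivial situation of Lemma \ref{lem-dirac}, the detecting pair consists of the trivial type and a type whose highest weight has the form $(\underbrace{1,\dots,1}_{p},0,\dots,0,\underbrace{-1,\dots,-1}_{p})$ for some integer $p\geq 1$. Twisting back by the character $(\det/|\det|)^{(2r-1)/2}$, the corresponding $\widetilde{U}(m_r)$-types in $J_{A_{m_r-1}}(\mu^r,\nu^r)$ are $\mathcal{V}_{\mu^r}$ and $\mathcal{V}_{\beta_r}$ with
\[
\beta_r \;=\; \mu^r + (\underbrace{1,\dots,1}_{p},0,\dots,0,\underbrace{-1,\dots,-1}_{p}),
\]
and the form of $J_{A_{m_r-1}}(\mu^r,\nu^r)$ is indefinite between them.

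Now these increments are precisely those permitted by the bottom-layer recipe of Section \ref{sec-bottom} for the blocks with $r\geq 2$. Letting $\beta$ denote the weight agreeing with $\mu$ outside the $r$-th block and with $\beta_r$ on the $r$-th block, both $\mathcal{V}_{\mu}$ (the lowest $K$-type, the degenerate $p=0$ case) and $\mathcal{V}_{\beta}$ are bottom-layer $K$-types of the induced module $I$ in \eqref{eq:inclusion}. The bottom-layer theorem guarantees that these $K$-types occur in $J(\mu,\nu)$ with the same multiplicities and Hermitian signatures as the $K\cap L$-types $\mathcal{V}_{\mu^r}$ and $\mathcal{V}_{\beta_r}$ do in the tensor factor $J_{A_{m_r-1}}(\mu^r,\nu^r)$ of the inducing module. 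Opposite signs upstairs therefore pass to opposite signs in $J(\mu,\nu)$, so $J(\mu,\nu)$ is not unitary.

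The argument is essentially bookkeeping; the one point requiring care is verifying that the $K$-types detecting non-unitarity in Section \ref{sec-pseudo} always lie inside the family of admissible bottom-layer increments listed in Section \ref{sec-bottom}. This is a direct case-by-case combinatorial check against the explicit weights supplied by Lemma \ref{lem-dirac} and Theorem \ref{thm-comp}, and constitutes the only real step of the proof.
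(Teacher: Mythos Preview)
Your argument is correct and follows essentially the same route as the paper's proof: both identify, via Lemma~\ref{lem-dirac} and Theorem~\ref{thm-comp}, a $\widetilde{U}(m_r)$-type of the form $\mu^r+(\underbrace{1,\dots,1}_{p},0,\dots,0,\underbrace{-1,\dots,-1}_{p})$ on which the form of $J_{A_{m_r-1}}(\mu^r,\nu^r)$ is indefinite, and then invoke the bottom-layer mechanism of Section~\ref{sec-bottom} to push this indefiniteness up to $J(\mu,\nu)$. Your write-up is in fact slightly more explicit than the paper's in isolating the verification that the detecting $K$-types lie among the admissible bottom-layer increments for $r\geq 2$.
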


\begin{proof}
By the results in the previous section, if $J_{A_{m_r-1}}(\mu^r,\nu^r)$ is non-unitary, then the Hermitian form on $J_{A_{m_r-1}}(\mu^r,\nu^r)$ will be indefinite at the lowest $K$-type and the $K$-type with highest weight
\[(\underbrace{\frac{2r+3}{2},\dots,\frac{2r+3}{2}}_{q},\frac{2r+1}{2},\dots,\frac{2r+1}{2},\underbrace{\frac{2r-1}{2},\dots,\frac{2r-1}{2}}_q),\] for some $q\in \mathbb{N}_+$.
%
%
By the bottom-layer argument in Section \ref{sec-bottom}, the Hermitian form on $I$
and its irreducible subquotient $J(\mu,\nu)$ is also indefinite at the lowest $K$-type $\mu$ and the $K$-type with highest weight
\[\beta_r = (\dots,\mu_{r+1},\underbrace{\frac{2r+3}{2},\dots,\frac{2r+3}{2}}_{q},\frac{2r+1}{2},\dots,\frac{2r+1}{2},\underbrace{\frac{2r-1}{2},\dots,\frac{2r-1}{2}}_q,\mu_{r-1},\dots,\mu_1),\]
for some $q\in \mathbb{N}_+$. Therefore, $J(\mu,\nu)$ is not unitary.
\end{proof}

\begin{corollary}
Let $J(\mu,\nu)$ be an irreducible Hermitian $(\mathfrak{g},K)$-module with $\mu = (\dots,$ $\mu^r,$ $\dots,$ $\mu^2).$
Then $J(\mu,\nu)$ is unitary if and only if each $J_{A_{m_r-1}}(\mu^r,\nu^r)$ is unitary for all $r \geq 2$.
\end{corollary}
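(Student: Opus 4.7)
The ``only if'' direction is immediate as the contrapositive of the preceding proposition, so my plan concentrates on the ``if'' direction. Suppose each $J_{A_{m_r-1}}(\mu^r, \nu^r)$ is unitary for $r \geq 2$. The key observation is that, since $\mu = (\dots, \mu^r, \dots, \mu^2)$ has no $\mu^1$ component, one has $m_1 = 0$, and therefore the Levi $\widetilde{L}$ appearing in the induced module $I$ of \eqref{eq:inclusion} is purely of $\widetilde{GL}$-type, with no spin factor:
\[
I = \mathrm{Ind}_{\prod_{r \geq 2} A_{m_r-1}}^{D_n}\left(\bigotimes_{r \geq 2} J_{A_{m_r-1}}(\mu^r, \nu^r)\right).
\]

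I would then invoke unitary parabolic induction: normalized Harish-Chandra induction from a real parabolic subgroup with unitary inducing data preserves unitarity. Since the external tensor product of the hypothesized unitary factors is itself a unitary $\widetilde{L}$-representation, this endows $I$ with a positive definite invariant Hermitian form. The Langlands subquotient $J(\mu, \nu)$ then inherits unitarity from $I$: orthogonal decomposition with respect to a positive definite invariant form splits off every invariant subspace, so positive definiteness transports to any irreducible subquotient.

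I do not expect any serious obstacle for this particular corollary; the argument is essentially formal once the $\widetilde{GL}$-type Levi is recognized, and the preceding proposition plus the pseudo-spherical unitary dual classification in Section \ref{sec-pseudo} do all the real work. The genuine difficulty of the problem lies in the complementary case when a $\widetilde{Spin}$ factor is present in $\widetilde{L}$, which requires a classification of the genuine unitary dual of that smaller spin group and is the principal inductive content of the remainder of the paper.
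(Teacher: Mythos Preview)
Your proposal is correct and matches the paper's intended (but unwritten) argument: the paper states the corollary without proof, immediately continuing with ``So we are reduced to studying the Hermitian module $J_{D_{m_1}}(\mu^1,\nu^1)$,'' which presupposes exactly the reasoning you supply. The ``only if'' direction is the contrapositive of the preceding proposition, and the ``if'' direction is unitary parabolic induction from a purely $\widetilde{GL}$-type Levi once one observes $m_1=0$, followed by the standard fact that irreducible subquotients of a unitarizable module are unitarizable.
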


So we are reduced to studying the Hermitian module $J_{D_{m_1}}(\mu^1,\nu^1)$.  
To determine the (non-)unitarity of these modules, the following $K$-types will play an important role:
\begin{definition} \label{def-relevant}
Let $G = Spin(2n,\mathbb{C})$. The $K$-types $\mathcal{V}_{\eta(q)}$ with highest weights
$$\eta(q) = (\underbrace{\frac{3}{2},\dots,\frac{3}{2}}_q,\underbrace{\frac{1}{2},\dots,\frac{1}{2}}_{n-q-1},\frac{(-1)^q}{2})$$
are called {\bf spin-relevant $K$-types}.
\end{definition}
  
To begin with, we recall the following:
\begin{theorem}[\cite{B89}, Corollary 2.9]\label{KL-thm}
Let $P=MN$ be a parabolic subgroup of $G$. Suppose
$(\mu+\nu,\check{\alpha})\notin 2\mathbb{Z}$ for any $\alpha\in \Delta(\mathfrak{n})$, then
\[J(\mu,\nu) \cong \mathrm{Ind}^G_{M}(J_M(\mu,\nu)).\]
\end{theorem}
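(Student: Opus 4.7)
The plan is to realize both sides as subquotients of the same principal series $X(\mu,\nu)$ and then reduce irreducibility of the induced module to invertibility of certain rank-one intertwining operators. By induction in stages, $X(\mu,\nu) \cong \mathrm{Ind}_M^G X_M(\mu,\nu)$, where $X_M(\mu,\nu)$ is the $M$-principal series. Therefore $\mathrm{Ind}_M^G J_M(\mu,\nu)$ appears as a subquotient of $X(\mu,\nu)$, and a bottom-layer argument of Section \ref{sec-bottom} applied to the lowest $(K\cap M)$-type of $J_M(\mu,\nu)$ shows that its lowest $K$-type is precisely $\mathcal{V}_{\{\mu\}}$. Since $J(\mu,\nu)$ is by definition the unique subquotient of $X(\mu,\nu)$ containing $\mathcal{V}_{\{\mu\}}$, it embeds into $\mathrm{Ind}_M^G J_M(\mu,\nu)$ as a subquotient, and the whole game reduces to showing that $\mathrm{Ind}_M^G J_M(\mu,\nu)$ is irreducible under the hypothesis.

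For irreducibility I would use the intertwining operator factorization of Section \ref{sec-intertwine1}. Assume $\nu$ is dominant. Write $w_0 = w_M w_L$ with $w_L$ the longest element of $W_M$ and $w_M$ a minimal coset representative; then the long intertwining operator factors as
\[
i_{w_0}(\mu,\nu) \;=\; i_{w_M}(w_L\mu, w_L\nu)\circ i_{w_L}(\mu,\nu),
\]
where $i_{w_L}(\mu,\nu) = \mathrm{Ind}_M^G\bigl(i^M_{w_L}(\mu,\nu)\bigr)$ has image $\mathrm{Ind}_M^G J_M(\mu,\nu)$, and the image of the composition is the Langlands subquotient $J(\mu,\nu)$. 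It thus suffices to prove that $i_{w_M}$ is injective on $\mathrm{Ind}_M^G J_M(\mu,\nu)$. Pick a reduced expression $w_M = s_{\beta_k}\cdots s_{\beta_1}$, with each $\beta_i$ a positive root in $\Delta(\mathfrak{n})$ (in the appropriate Weyl-conjugate Levi), and decompose $i_{w_M}$ as the corresponding product of rank-one intertwiners. By the classical Zhelobenko/Duflo calculation for complex groups, each rank-one operator $i_{s_\beta}(\mu',\nu')$ is an isomorphism on every $K$-isotypic subspace exactly when $(\mu'+\nu',\check\beta)\notin 2\mathbb{Z}$. Under the hypothesis $(\mu+\nu,\check\alpha)\notin 2\mathbb{Z}$ for all $\alpha \in \Delta(\mathfrak{n})$, the quantities $(\mu'+\nu',\check{\beta_i})$ at each stage lie in the $W_L$-orbit of such $(\mu+\nu,\check\alpha)$, hence also avoid $2\mathbb{Z}$. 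Consequently every factor is bijective, $i_{w_M}$ is injective, and $\mathrm{Ind}_M^G J_M(\mu,\nu) \cong J(\mu,\nu)$.

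The main obstacle is the bookkeeping inside the factorization: one must check that the non-integrality hypothesis propagates to each intermediate rank-one operator along the chosen reduced expression, i.e.\ that the coroot pairings one actually meets are Weyl-translates of $(\mu+\nu,\check\alpha)$ for $\alpha\in\Delta(\mathfrak{n})$ and never get shifted into $2\mathbb{Z}$ by an integer correction. This is a mild computation using the fact that $W_L$ permutes $\Delta(\mathfrak{n})$ up to sign and that the parameters transform equivariantly; but it is the one place where the argument is not completely formal, and is the reason the precise hypothesis is phrased uniformly over all of $\Delta(\mathfrak{n})$ rather than only over simple roots.
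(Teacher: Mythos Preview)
The paper does not prove this theorem; it is quoted verbatim as \cite[Corollary 2.9]{B89} and used as a black box. So there is nothing in the paper to compare your argument against. That said, your outline is essentially the standard proof for complex groups and is correct in spirit: realize both modules inside $X(\mu,\nu)$, factor the long intertwining operator through the Levi, and check that the remaining piece $i_{w^M}$ is a bijection by decomposing it into rank-one operators, each of which is invertible because the relevant pairing $\langle\mu+\nu,\check\alpha\rangle$ avoids $2\mathbb{Z}$.

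Two small points of precision are worth tightening. First, in a reduced expression $w^M = s_{\beta_k}\cdots s_{\beta_1}$ the $\beta_i$ are simple roots of $G$, not roots of $\mathfrak{n}$; what lies in $\Delta(\mathfrak{n})$ is the inversion set $\{s_{\beta_1}^{-1}\cdots s_{\beta_{i-1}}^{-1}\beta_i\}$, and that is exactly what you need since the pairing at stage $i$ equals $\langle\mu+\nu,\ w_L^{-1}s_{\beta_1}^{-1}\cdots s_{\beta_{i-1}}^{-1}\check\beta_i\rangle$. Second, the reason this lands back in $\pm\Delta(\mathfrak{n})$ is that $w_L=w_{0,M}$ permutes $\Delta(\mathfrak{n})$ (up to sign), so no integer shift occurs and the hypothesis propagates cleanly; this is the ``mild computation'' you allude to, and once stated this way it is entirely formal. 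With these clarifications your argument is complete.
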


Write $\nu^1 =(\nu_{1},\dots,\nu_{m_1})\in \mathbb{R}^{n}$ (recall that $\mu^1 = (\frac{1}{2}, \dots, \frac{1}{2})$). Let $T \subseteq (-1,1]$ be a finite subset such that for any $t \in T$, there exists some $1 \leq i \leq n$ satisfying $\nu_i - t \in 2\mathbb{Z}$.  

For $t \in T$, let 
$$N_t := \{\nu_i\ |\ \nu_i - t \in 2\mathbb{Z}\}, \quad \quad M_t:= (\overbrace{\frac{1}{2},\dots,\frac{1}{2}}^{|N_t|\ terms})$$
so that 
$$\mu^1 = \bigsqcup_{t \in T} M_t, \quad \quad \nu^1 = \bigsqcup_{t \in T} N_t$$ 
as sets. Since we assume $J_{D_{2m_1}}(\mu^1,\nu^1)$ to be Hermitian, one has $M_{-t} = M_t^h = M_t$ and $N_{-t} = N_t^h = -N_{t}$ for all $t \in T$. Now Theorem \ref{KL-thm} implies that $J(\mu,\nu)$ is equal to
\begin{equation} \label{eq-bv}
\mathrm{Ind}_M^G\begin{pmatrix} \displaystyle \bigotimes_{t \in T'} J_{A_{2(|N_t|+|N_{1-t}|)-1}}(M_t\cup M_{-t}\cup -M_{1-t}\cup -M_{-1+t},\ N_t\cup N_{-t}\cup N_{1-t}\cup N_{-1+t}) \otimes \\  J_{A_{|N_0|+|N_1|-1}}(M_{0} \cup -M_1,\ N_0\cup N_1) \otimes
J_{D_{2|N_{1/2}|}}(M_{1/2} \cup M_{1/2}^h,\ N_{1/2}\cup N_{1/2}^h) \end{pmatrix}\end{equation}
where
$M$ is of Type $\mathop{\prod}\limits_{t \in T'} A_{2(|N_t|+|N_{1-t}|)-1} \times A_{|N_0|+|N_1|-1} \times
 D_{2|N_{1/2}|}$, and $T' := T \cap \big((0,\frac{1}{2})\cup (\frac{1}{2},1)\big)$. If $J(\mu,\nu)$ is unitary, one has 
\begin{itemize}
\item[(i)] $J_{A_{2(|N_t|+|N_{1-t}|)-1}}(M_t\cup M_{-t}\cup -M_{1-t}\cup -M_{-1+t},\ N_t\cup N_{-t}\cup N_{1-t}\cup N_{-1+t})$ is parabolically induced from a tensor product of Stein's complementary series tensored with $(\frac{\det}{|\det|})^{\pm1/2}$.
\item[(ii)] $J_{A_{|N_0|+|N_1|-1}}(M_0 \cup -M_1,\ N_0\cup N_1)$ is parabolically induced from a tensor product of the unitary character $(\frac{\det}{|\det|})^{\pm1/2}$.
\end{itemize}
Actually, assume that (i) (resp. (ii)) is not true. By the classification of unitary representations in type A, the Hermitian form of the modules in (i) (resp. (ii)) will be indefinite at the $K$-type with highest weight
\[(\underbrace{\frac{3}{2},\dots,\frac{3}{2}}_q,\frac{1}{2},\dots,\frac{1}{2},\underbrace{\frac{-1}{2},\dots,\frac{-1}{2}}_q)\ \text{or} \ (\underbrace{\frac{1}{2},\dots,\frac{1}{2}}_q,\frac{-1}{2},\dots,\frac{-1}{2},\underbrace{\frac{-3}{2},\dots,\frac{-3}{2}}_q)\]
for some $q\in \mathbb{N}_+$. Hence, the Hermitian form of the induced module \eqref{eq-bv} or equivalently $J_{D_{2m_1}}(\mu_1,\nu_1)$ is indefinite at $\mathcal{V}_{\eta(q)}$. Since $\mathcal{V}_{\eta(q)}$ are the bottom layer for the induced module \eqref{eq:inclusion}, the Hermitian form of $J(\mu,\nu)$ will be indefinite at $\mathcal{V}_{\eta(q)}$.

So we are further reduced to studying the Hermitian module $J_{D_{2|N_{1/2}|}} (M_{1/2} \cup M_{1/2}^h,N_{1/2}\cup N_{1/2}^h)$. From now on, we assume that $(\mu, \nu) = (M_{1/2} \cup M_{1/2}^h,N_{1/2}\cup N_{1/2}^h)$, and study the unitarity of
\begin{equation} \label{eq-dd}
J_{D_{2n}}(M_{1/2} \cup M_{1/2}^h,N_{1/2}\cup N_{1/2}^h)  = \mathrm{Ind}_{D_n \times D_n}^{D_{2n}}\left(J(M_{1/2},N_{1/2}) \otimes J(M_{1/2},N_{1/2})^h\right),
\end{equation}
where the last equality is 
on the level of virtual representations:
\[\mathrm{Ind}_{D_n \times D_n}^{D_{2n}}(\pi \otimes \pi^h) := \sum_{(w, w^h) \in W(D_n \times D_n)} c(w)c^h(w^h)X(\lambda_{\pi} \cup \lambda_{\pi^h}; w\lambda_{\pi} \cup w^h\lambda_{\pi^h}),\]
where $\pi = \displaystyle \sum_{w \in W(D_n)} c(w)X_{D_n}(\lambda_{\pi}; w\lambda_{\pi})$ and $\pi^h = \displaystyle \sum_{w^h \in W(D_n)} c^h(w^h)X_{D_n}(\lambda_{\pi^h}; w^h\lambda_{\pi^h})$ are the character formulas of $\pi$ and $\pi^h$ respectively.

\subsection{Statement of the main theorem} \label{sec-main}
We now present the final reduction step in the determination of $\widehat{G}$, which leads us to the main theorem in Theorem \ref{thm-main}. Recall in the previous section that one only needs to study the unitarity of
$$\pi = J_{D_{2n}}(M_{1/2}\cup M_{1/2}^h,\ N_{1/2}\cup N_{1/2}^h).$$
In terms of $(\lambda_L;\lambda_R)$-coordinates, one has
$\lambda_L  = \left(\frac{1}{2}(M_{1/2} + N_{1/2}); \frac{1}{2}(M_{1/2}^h + N_{1/2}^h)\right)$ and 
$\lambda_R  = \left(\frac{1}{2}(-M_{1/2} + N_{1/2}); \frac{1}{2}(-M_{1/2}^h + N_{1/2}^h)\right)$.

\medskip
Consider the induced module
\begin{equation} \label{eq-reduce2}
\mathrm{Ind}_{A_{2n-1}}^{D_{2n}}\left(\mathrm{Ind}_{A_{n-1} \times A_{n-1}}^{A_{2n-1}}\left( J_{A_{n-1}}(M_{1/2},N_{1/2}) \otimes J_{A_{n-1}}(M_{1/2}^h,N_{1/2}^h) \right)\right)\end{equation}
having $\pi$ as its lowest $K$-type subquotient. By Section \ref{sec-bottom}, there exists some irreducible representations $\iota_i$ of Type $A_{n_i-1}$ such that
$$J_{A_{n-1}}(M_{1/2},N_{1/2})  = \mathrm{Ind}_{\prod_i A_{n_i-1}}^{A_{n-1}}\left(\bigotimes_i \iota_i \right) \quad \quad
J_{A_{n-1}}(M_{1/2}^h,N_{1/2}^h)  = \mathrm{Ind}_{\prod_i A_{n_i-1}}^{A_{n-1}}\left(\bigotimes_i \iota_i^h \right).$$
Since the adjoint $K$-type $\mathcal{V}_{\eta(1)}$ (c.f. Definition \ref{def-relevant}) is bottom layer with respect to the induced module \eqref{eq-reduce2}, Lemma \ref{lem-dirac} implies that if $\pi$ is unitary, then all
$\iota_i$ must be one-dimensional, and
\begin{equation} \label{eq-iota}
\iota_i \otimes \iota_i^h \cong \mathrm{comp}_{1/2}\left(n_i,\frac{2r-1}{2}\right) \otimes \mathrm{comp}_{1/2}\left(n_i,\frac{2r-1}{2}\right)^h
\end{equation}
for some $1 \leq r \leq n_i$ by Theorem \ref{thm-comp}(b). More explicitly, $\iota_i \cong \begin{cases} \mathrm{comp}_{1/2}\left(n_i,\frac{2r-1}{2}\right) &\text{if}\ n_i+r\ \text{is even}; \\
\mathrm{comp}_{1/2}\left(n_i,\frac{2r-1}{2}\right)^h & \text{if}\ n_i+r\ \text{is odd}\end{cases}.$

\medskip
In conclusion, one only needs to study $\pi = J(M_{1/2}\cup M_{1/2}^h,N_{1/2}\cup N_{1/2}^h)$, with $N_{1/2}$ equal to
$$N_{1/2} = \bigcup_{i=1}^l \left(\frac{1}{2} + 2(x_i-1),\   \dots,\   \frac{5}{2},\  \frac{1}{2},\   \frac{-3}{2},\  \dots,\  \frac{1}{2} - 2y_i \right) 
\cup \bigcup_{i=l+1}^k
           \left(\frac{1}{2} + 2(x_i-1),\  \dots,\   \frac{5}{2},\   \frac{1}{2}\right)$$ 
for two descending chains of positive integers $\{x_1 \geq x_2 \geq \dots  \geq x_k\}$, $\{y_1 \geq y_2 \geq \dots \geq y_l\}$ with $k \geq l$. We will write
\begin{equation} \label{eq-strings}
N_{1/2} \longleftrightarrow \begin{pmatrix} x_1 & x_2 & \dots & x_k \\ y_1 & y_2 & \dots & y_k \end{pmatrix}
\end{equation}
from now on, by assuming $y_i = 0$ if $i > l$ if necessary. More explicitly, each $\begin{pmatrix} x_i \\ y_i \end{pmatrix}$ in \eqref{eq-strings} corresponds to the one-dimensional module $\iota_i$ or $\iota_i^h$ in \eqref{eq-iota} by:
\begin{equation} \label{eq-complementary}
\begin{cases}
\mathrm{comp}_{1/2}(x_i+y_i,x_i-y_i-\frac{1}{2}) &\text{if}\ x_i > y_i \\
\mathrm{comp}_{1/2}(x_i+y_i,y_i-x_i+\frac{1}{2})^h &\text{if}\ x_i \leq y_i,
\end{cases}\end{equation}
In particular, if $x_i - y_i = 0$ or $1$, then $\iota_i \otimes \iota_i^h$ corresponds to the Stein's complementary series (cf. Theorem \ref{thm-comp}).

\medskip
Here is the main theorem for $G = Spin(2n,\mathbb{C})$:
\begin{theorem} \label{thm-main}
Let $\pi = J\left(M_{1/2} \cup M_{1/2}^h,\ N_{1/2} \cup N_{1/2}^h\right)$, where $N_{1/2}$ is as given in \eqref{eq-strings}. Then $\pi$ is unitary if and only if
\begin{equation} \label{eq-unitary}
x_i \geq y_i\quad \text{and} \quad y_i + 1 \geq x_{i+1}
\end{equation}
for all $1 \leq i \leq k$. Otherwise, $\pi$ has an indefinite signature on the lowest $K$-type and $\mathcal{V}_{\eta(i)}$ for some $i$. In particular, the parameter must be of the form:
$$\begin{pmatrix} x_1 & x_2 & \dots & x_l & x_{l+1} & 1 & \dots & 1 \\ y_1 & y_2 & \dots & y_l & 0 & 0 & \dots & 0 \end{pmatrix}.$$
\end{theorem}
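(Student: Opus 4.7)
\emph{Sufficiency.} Assume the inequalities in \eqref{eq-unitary} hold. My plan is to identify $\pi$ as a parabolic induction from a tensor product of known unitary building blocks. Each chain $\begin{pmatrix} x_i \\ y_i \end{pmatrix}$ with $x_i - y_i \in \{0,1\}$ corresponds via \eqref{eq-complementary} to a $\mathrm{comp}_{1/2}$-factor with parameter $t = 1/2$, so $\iota_i \otimes \iota_i^h$ is a Stein complementary series tensored with the genuine character $(\det/|\det|)^{1/2}$ of $\widetilde{GL}(2(x_i+y_i),\mathbb{C})$, unitary by Theorem \ref{thm-comp}. The remaining chains with $x_i - y_i \geq 2$ interlock via the second inequality $y_i + 1 \geq x_{i+1}$, exactly in the pattern recorded in Remark \ref{rmk-brega}, and therefore assemble into a genuine unipotent representation of Brega type for a spin group of lower rank, unitary by \cite{Br99}. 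Parabolic induction of unitary modules then yields the unitarity of $\pi$.

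\emph{Necessity.} Suppose now that one of the inequalities in \eqref{eq-unitary} fails. I would exhibit a spin-relevant $K$-type $\mathcal{V}_{\eta(q)}$ on which the Hermitian form of $\pi$ is indefinite. Since the $\mathcal{V}_{\eta(q)}$ are bottom layer for the induced module \eqref{eq:inclusion}, it suffices to detect indefiniteness at the level of $\pi$ itself. The main tool is the Hermitian intertwining operator $i_{w_h}(\mu,\nu)$ from Section \ref{sec-intertwine1} restricted to the $\mathcal{V}_{\eta(q)}$-isotypic component of $X(\mu,\nu)$. Following \cite{D75} and \cite{B10}, one factors this operator along a reduced expression of $w_h$ and computes its signature block by block.

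Two cases arise. If $x_i < y_i$ for some $i$, then $\iota_i \cong \mathrm{comp}_{1/2}(x_i + y_i,\, y_i - x_i + 1/2)^h$ has parameter $t \geq 3/2$ outside the Stein range; by Theorem \ref{thm-comp}(b)--(c), the form on $\iota_i \otimes \iota_i^h$ is indefinite on a type-A $K$-type of shape $(1,\ldots,1,0,\ldots,0,-1,\ldots,-1)$, and the bottom-layer principle propagates this to indefiniteness of $\pi$ at the corresponding $\mathcal{V}_{\eta(q)}$. If instead $y_i + 1 < x_{i+1}$, the consecutive chains $\begin{pmatrix} x_i \\ y_i \end{pmatrix}$ and $\begin{pmatrix} x_{i+1} \\ y_{i+1} \end{pmatrix}$ can no longer be separated via Theorem \ref{KL-thm}, and their interaction contributes a nontrivial mixed block to the intertwining operator on some $\mathcal{V}_{\eta(q)}$ whose determinant becomes negative; the relevant index $q$ is controlled by the gap $x_{i+1} - y_i - 1$.

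The principal obstacle lies in the second case: carrying out the precise block-decomposition of the intertwining operator on each spin-relevant $K$-type and matching the weight structure of $\eta(q)$ under the relevant Levi subgroup with the string structure of $N_{1/2}$. I would handle this by first using bottom-layer arguments and Theorem \ref{KL-thm} to reduce to the minimal configuration of two interacting chains, performing the key explicit sign computation there, and then recombining the remaining chains via induction in stages. The structural description of the unitary parameters at the end of the theorem follows immediately from iterating the two inequalities together with the decreasing conditions on $\{x_i\}$ and $\{y_i\}$: once $y_i = 0$ all subsequent $y_j$ vanish, and the constraint $y_{j} + 1 \geq x_{j+1}$ then forces $x_j \in \{0,1\}$ for $j$ sufficiently large, yielding the displayed $\binom{1}{0}$ tail.
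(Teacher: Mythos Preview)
Your sufficiency sketch is essentially the paper's argument (Theorem \ref{thm-bv}): peel off a Stein factor whenever an equality $x_i=y_i$ or $y_i+1=x_{i+1}$ holds, and what remains (with strict inequalities) is a Brega representation. Just be careful to say that $\pi$ is a \emph{subquotient} of the resulting unitarily induced module, not equal to it.

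Your necessity argument has a real gap in Case 1. You want to detect indefiniteness of $\iota_i\otimes\iota_i^h$ on the type-$A$ $K$-type $(\underbrace{1,\dots,1}_{q},0,\dots,0,\underbrace{-1,\dots,-1}_{q})$ from Theorem \ref{thm-comp}(b), and then propagate it to $\pi$ at $\mathcal{V}_{\eta(q)}$ by bottom layer for the induction $\mathrm{Ind}_{A_{2n-1}}^{D_{2n}}$ of \eqref{eq-reduce2}. But $\mathcal{V}_{\eta(q)}$ is bottom layer for that induction only when $q\le 1$: the difference $\eta(q)-(\tfrac12,\dots,\tfrac12)$ has coordinate sum $q$ (for $q$ even) or $q-1$ (for $q$ odd), while the $A_{2n-1}$-root lattice consists of vectors with coordinate sum zero. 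When $x_i<y_i$ the relevant $q$ equals $2x_i+1\ge 3$, so bottom layer does not apply. This is not a technicality you can patch: the paper replaces the bottom-layer step by a substantive intertwining-operator computation (Proposition \ref{single_mult}) showing that the \emph{multiplicities} of every $\mathcal{V}_{\eta(i)}$ in $\pi$ and in $\mathrm{Ind}_{A_{2n-1}}^{D_{2n}}\bigl(J_A(M_{1/2}\cup M_{1/2}^h,N_{1/2}\cup N_{1/2}^h)\bigr)$ agree. That is done by factoring the long intertwining operator as $\omega\circ\zeta\circ\kappa$ and proving $\omega$ and $\omega^h$ are injective on $\mathrm{im}(\zeta)$ at each $\eta(i)$-isotypic level, via the explicit $GL$-calculations of Section \ref{sec-in}. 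The analogous mechanism for Case 2 is Proposition \ref{two_string_mult}.

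Your proposed reduction to a ``minimal configuration of two interacting chains'' via Theorem \ref{KL-thm} also does not work: all coordinates of $N_{1/2}\cup N_{1/2}^h$ lie in $\tfrac12+2\mathbb{Z}$, so for every root $\alpha$ separating two chains one has $\langle\mu+\nu,\check\alpha\rangle\in 2\mathbb{Z}$ and the hypothesis of Theorem \ref{KL-thm} fails. The paper's reduction goes the other way (Sections \ref{sec-comp}--6.4): starting from an arbitrary bad parameter, it \emph{induces up} by Stein complementary series to a larger spin group until the parameter has the shape of Proposition \ref{prop-nonunit2}, then invokes the two base cases. If the enlarged module were unitary it would force $\pi$ unitary, contradicting Corollaries \ref{cor-u}--\ref{cor-v}.
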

Since $\eta(i)$ is bottom layer corresponding to the induced module \eqref{eq:inclusion}, this finishes the classification
of $\widehat{G}$ for $G = Spin(2n,\mathbb{C})$.

\begin{remark} \label{rmk-brega}
When the inequalities in \eqref{eq-unitary} are strict, i,e. $N_{1/2}$ is of the form:
$$\begin{pmatrix} x_1 & x_2 & \dots & x_l & x_{l+1} \\ y_1 & y_2 & \dots & y_l & y_{l+1} \end{pmatrix} \quad \quad x_i > y_i \geq x_{i+1},$$
it matches with that of \cite[Equation (9)]{Br99}, where the unitarity of $\pi$ is proved in \cite[Theorem 3.1]{Br99}.
More precisely, the $\lambda_1^L$ in \cite[Equation (8)]{Br99} is equal to $\frac{1}{2}(M_{1/2}+N_{1/2})$. Henceforth we call them {\bf Brega representations}.
In Appendix \ref{sec-unipotent}, we will prove that all Brega representations are
{\it unipotent} in the sense of Barbasch \cite{B17}.
\end{remark}

To end this section, we list all the representations of $Spin(16,\mathbb{C})$ whose parameters are of the form as in Theorem \ref{thm-main}. We list whether these representations are unitary and, more precisely, Brega representations.
In the case when it is not unitary, we also give which spin-relevant $K$-type has an indefinite signature in its Hermitian form.

\begin{center}
\begin{longtable}{|c|c|c|c|c|}
\hline
   Parameter & $J(\lambda_L; \lambda_R)$ &  Is Unitary?  \\  \hline
    $\begin{pmatrix} 4 \\ 0 \end{pmatrix}$ &$J\begin{pmatrix} \frac{7}{2},& \frac{5}{2},& \frac{3}{2}, & \frac{1}{2},& 0, &-1,& -2, &-3 \\ 3,&2,&1,&0,& -\frac{1}{2},&-\frac{3}{2},&-\frac{5}{2},&-\frac{7}{2} \end{pmatrix}$   &  Yes - Brega   \\ \hline
    $\begin{pmatrix} 3 & 1 \\ 0 & 0 \end{pmatrix}$ &$J\begin{pmatrix} \frac{5}{2},& \frac{3}{2},& \frac{1}{2},& \frac{1}{2},&0,&0,&-1,&-2 \\ 2,&1,&0,&0,&-\frac{1}{2},&-\frac{1}{2},&-\frac{3}{2},&-\frac{5}{2} \end{pmatrix} $ & Yes \\ \hline
    $\begin{pmatrix} 3 \\ 1 \end{pmatrix}$ &$J\begin{pmatrix} \frac{5}{2},& \frac{3}{2},& \frac{1}{2},& -\frac{1}{2},& 1,&0,&-1,&-2 \\ 2,&1,&0,&-1,&\frac{1}{2},&-\frac{1}{2},&-\frac{3}{2},&-\frac{5}{2} \end{pmatrix}$ &  Yes - Brega   \\ \hline
     $\begin{pmatrix} 2 & 2 \\ 0 & 0\end{pmatrix}$ &$J\begin{pmatrix}\frac{3}{2},& \frac{3}{2},& \frac{1}{2},& \frac{1}{2},&0,&0,&-1,&-1 \\ 1,&1,&0,&0,&-\frac{1}{2},&-\frac{1}{2},&-\frac{3}{2},&-\frac{3}{2} \end{pmatrix}$ & No - $\mathcal{V}_{\eta(2)}$\\ \hline
    $\begin{pmatrix} 2 & 1 & 1 \\ 0 & 0 & 0 \end{pmatrix}$  &$J\begin{pmatrix}\frac{3}{2},& \frac{1}{2},& \frac{1}{2},& \frac{1}{2},&0,&0,&0,&-1 \\ 1,&0,&0,&0,&-\frac{1}{2},&-\frac{1}{2},&-\frac{1}{2},&-\frac{3}{2} \end{pmatrix}$ & Yes  \\ \hline
    $\begin{pmatrix} 2 & 1 \\ 1 & 0 \end{pmatrix}$ &$J\begin{pmatrix}\frac{3}{2},& \frac{1}{2},& \frac{1}{2},& -\frac{1}{2},&1,&0,&0,&-1 \\ 1,&0,&0,&-1,& \frac{1}{2},&-\frac{1}{2},&-\frac{1}{2},&-\frac{3}{2}\end{pmatrix}$ & Yes - Brega \\ \hline
     $\begin{pmatrix} 2  \\ 2  \end{pmatrix}$ &$J\begin{pmatrix}\frac{3}{2},& \frac{1}{2},& -\frac{1}{2},& -\frac{3}{2},& 2,&1,&0,&-1 \\ 1,&0,&-1,&-2,& \frac{3}{2},&\frac{1}{2},&-\frac{1}{2},&-\frac{3}{2}\end{pmatrix}$ & Yes   \\ \hline
     $\begin{pmatrix} 1 & 1 & 1 & 1 \\ 0 & 0 &0 &0 \end{pmatrix}$ &$J\begin{pmatrix}\frac{1}{2},& \frac{1}{2},& \frac{1}{2},& \frac{1}{2},&0,&0,&0,&0 \\ 0,&0,&0,&0,&-\frac{1}{2},&-\frac{1}{2},&-\frac{1}{2},&-\frac{1}{2}\end{pmatrix}$ & Yes \\ \hline
      $\begin{pmatrix} 1 & 1 & 1 \\ 1 & 0 & 0 \end{pmatrix}$ &$J\begin{pmatrix}\frac{1}{2},& \frac{1}{2},& \frac{1}{2},& -\frac{1}{2},& 1,&0,&0,&0 \\ 0,&0,&0,&-1,& \frac{1}{2},&-\frac{1}{2},&-\frac{1}{2},&-\frac{1}{2}\end{pmatrix}$ & Yes  \\ \hline
     $\begin{pmatrix} 1 & 1 \\ 1 &  1 \end{pmatrix}$ &$J\begin{pmatrix}\frac{1}{2},& \frac{1}{2},& -\frac{1}{2},& -\frac{1}{2},& 1,&1,&0,&0 \\0,&0,&-1,&-1,& \frac{1}{2},&\frac{1}{2},&-\frac{1}{2},&-\frac{1}{2}\end{pmatrix}$ & Yes \\ \hline
      $\begin{pmatrix} 1 & 1 \\ 2 & 0 \end{pmatrix}$ &$J\begin{pmatrix}\frac{1}{2},& \frac{1}{2},& -\frac{1}{2},& -\frac{3}{2},&2,&1,&0,&0 \\ 0,&0,&-1,&-2,& \frac{3}{2},&\frac{1}{2},&-\frac{1}{2},&-\frac{1}{2}\end{pmatrix}$ & No - $\mathcal{V}_{\eta(3)}$\\ \hline
    $\begin{pmatrix}  1 \\ 3  \end{pmatrix}$ &$J\begin{pmatrix}\frac{1}{2},& -\frac{1}{2},& -\frac{3}{2},& -\frac{5}{2},&3,&2,&1,&0 \\ 0,&-1,&-2,&-3,& \frac{5}{2},&\frac{3}{2},&\frac{1}{2},&-\frac{1}{2}\end{pmatrix}$ & No - $\mathcal{V}_{\eta(3)}$ \\ \hline
\end{longtable}
\end{center}

\section{Proof of unitarity} \label{sec-unitary}
In this section, we will prove the following stronger result, which (by Remark \ref{rmk-brega}) immediately implies the unitarity statement of Theorem \ref{thm-main}:
\begin{theorem}  \label{thm-bv}
Let $\pi =  J_{D_{2n}}\left(M_{1/2} \cup M_{1/2}^h,\ N_{1/2} \cup N_{1/2}^h\right)$
be such that $N_{1/2} \longleftrightarrow \begin{pmatrix} x_1 & x_2 & \dots & x_k  \\ y_1 & y_2 & \dots & y_k \end{pmatrix}$
satisfies \eqref{eq-unitary}. Then $\pi$ is a subquotient of a parabolically induced module, whose inducing module is
a tensor product of some Stein's complementary series $\mathrm{Ind}_{A_{\ell-1} \times A_{\ell-1}}^{A_{2\ell-1}}$ $(\mathrm{comp}_{1/2}(\ell,\frac{1}{2})$ $\otimes$ $\mathrm{comp}_{1/2}(\ell,\frac{1}{2})^h)$ along with a Brega representation (Remark \ref{rmk-brega}).
\end{theorem}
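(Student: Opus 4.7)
The plan is to give an explicit decomposition of the column sequence $\{(x_i,y_i)\}_{i=1}^k$ into ``Stein blocks'' and ``Brega blocks'', and then realize $\pi$ as the Langlands subquotient of the parabolic induction from the corresponding Levi subgroup of $G$.

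First I would separate out each column with $x_i=y_i=a_i$ as a Stein block. By \eqref{eq-complementary}, the associated $\iota_i\otimes\iota_i^h$ is (after harmlessly swapping the two tensor factors) precisely the inducing datum of the Stein complementary series of $GL(4a_i,\mathbb{C})$, i.e., $\mathrm{Ind}_{A_{2a_i-1}\times A_{2a_i-1}}^{A_{4a_i-1}}\bigl(\mathrm{comp}_{1/2}(2a_i,1/2)\otimes\mathrm{comp}_{1/2}(2a_i,1/2)^h\bigr)$. Next, I would group the remaining columns (all with $x_i>y_i$) into maximal Brega blocks by cutting between originally-consecutive positions $i,i+1$ precisely when $y_i+1=x_{i+1}$. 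The claim is that each surviving block $\{(x_{i_1},y_{i_1}),\dots,(x_{i_s},y_{i_s})\}$ satisfies the strict Brega inequalities $x_j>y_j\geq x_{j+1}$ of Remark~\ref{rmk-brega}: strictness of $x_j>y_j$ is automatic after Stein removal, and for $y_j\geq x_{j+1}$ one distinguishes two cases---if $j,j+1$ are originally adjacent, the cutting rule combined with \eqref{eq-unitary} gives $y_j+1>x_{j+1}$; if one or more Stein columns $(a_b,a_b)$ intervene in the original sequence, the weakly-descending chains $\{x_i\}$ and $\{y_i\}$ yield $y_j\geq y_b=a_b=x_b\geq x_{j+1}$ automatically. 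Each block thus defines a Brega representation $\pi^{(B)}_\alpha$ of $Spin(4n^{(B)}_\alpha,\mathbb{C})$ with $n^{(B)}_\alpha:=\sum_t(x_{i_t}+y_{i_t})$.

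I would then take $L'\subset G$ to be the Levi $\prod_\alpha Spin(4n^{(B)}_\alpha,\mathbb{C})\times\prod_\beta GL(4a_\beta,\mathbb{C})$ (whose total rank equals $2\sum_i(x_i+y_i)=2n$), and set $\sigma=\bigotimes_\alpha\pi^{(B)}_\alpha\otimes\bigotimes_\beta\sigma^{(S)}_\beta$, where $\sigma^{(S)}_\beta$ is the Stein complementary series attached to the $\beta$-th Stein block. Tracing the column decomposition, the Langlands parameters of $\sigma$ on $L'$ reassemble, up to the $W(D_{2n})$-action, to $(\mu,\nu)=(M_{1/2}\cup M_{1/2}^h,\ N_{1/2}\cup N_{1/2}^h)$. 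By Proposition~\ref{prop:BVprop}, the unique Langlands subquotient of $\mathrm{Ind}_{L'}^G(\sigma)$ coincides with $J(\mu,\nu)=\pi$, so $\pi$ is a subquotient of $\mathrm{Ind}_{L'}^G(\sigma)$, as desired.

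The main obstacle is the combinatorial partitioning step: verifying that the cutting rule always produces runs satisfying the strict Brega inequalities, using only \eqref{eq-unitary} and the monotonicity of $\{x_i\}$ and $\{y_i\}$. The essential observation, sketched above, is that ``across-Stein'' gaps automatically pose no problem, so the only real obstruction---the equality $y_j=x_{j+1}-1$ between originally-adjacent non-Stein columns---is precisely where the cutting rule is invoked.
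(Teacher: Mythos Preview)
Your combinatorial partitioning is fine, but the proposal has a structural gap at the induction step: the group $L'=\prod_\alpha Spin(4n^{(B)}_\alpha,\mathbb{C})\times\prod_\beta GL(4a_\beta,\mathbb{C})$ is \emph{not} a Levi subgroup of $G=Spin(4n,\mathbb{C})$ as soon as there is more than one Brega block. A Levi subgroup in type $D$ has at most one orthogonal (type $D$) factor; the rest must be general linear. Your cutting rule at $y_i+1=x_{i+1}$ will in general produce several Brega blocks --- already for $N_{1/2}\leftrightarrow\begin{pmatrix}3&1\\0&0\end{pmatrix}$ (which satisfies \eqref{eq-unitary}) you cut between the two columns and obtain two type $D$ pieces $\begin{pmatrix}3\\0\end{pmatrix}$ and $\begin{pmatrix}1\\0\end{pmatrix}$. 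So the induced module $\mathrm{Ind}_{L'}^G(\sigma)$ you describe does not exist, and the appeal to Proposition~\ref{prop:BVprop} collapses. (Note also that the statement of the theorem asks for a tensor product of Stein factors together with \emph{one} Brega representation, which is exactly the constraint forced by the Levi structure.)

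The paper avoids this by treating the equality $y_i+1=x_{i+1}$ differently: rather than cutting there, it peels off a \emph{Stein} factor $\mathrm{comp}_{1/2}(x_{i+1}+y_i,\tfrac12)\otimes\mathrm{comp}_{1/2}(x_{i+1}+y_i,\tfrac12)^h$ on a $GL$ Levi factor, and \emph{merges} columns $i$ and $i+1$ into the single column $(x_i,y_{i+1})$ in the residual $\overline{N}_{1/2}$. Likewise, when $x_i=y_i$ it peels off the Stein factor of size $2x_i$ (as you do) and deletes column $i$. Iterating by induction on the number of equalities in \eqref{eq-unitary}, every step removes one $GL$-type Stein factor while keeping a single type $D$ residual, and the base case (no equalities left) is exactly a Brega representation. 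The point you are missing is that the $y_i+1=x_{i+1}$ equality is itself a source of a Stein complementary series (with $\ell=x_{i+1}+y_i$ odd), not a place to split off a second Brega piece.
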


\begin{proof}
We prove by induction on the number of equalities in \eqref{eq-unitary}. Note that in the base case when there are no equalities in \eqref{eq-unitary}, then $\pi$ itself is a Brega representation.

\smallskip
By the induction hypothesis, suppose the theorem holds for all $\pi_r$ with $N_{1/2}$-coordinates having exactly $r$ equalities in \eqref{eq-unitary}. Consider $\pi_{r+1}$, whose corresponding $N_{1/2}$-coordinates have $r+1$ equalities in \eqref{eq-unitary}.

\smallskip
Firstly, suppose $x_i = y_i$ for some $1 \leq i \leq l$, then $J_{D_n}(M_{1/2},N_{1/2})$ (resp. $J_{D_n}(M_{1/2}^h,N_{1/2}^h)$)
is the subquotient of the induced module
$$\mathrm{Ind}_{A_{2x_i-1} \times D_{n-2x_i}}^{D_n}\left(\mathrm{comp}_{1/2}(2x_i,\frac{1}{2})^h \otimes J(\overline{M}_{1/2}, \overline{N}_{1/2})\right)$$
(\text{resp.} $\mathrm{Ind}_{A_{2x_i-1} \times D_{n-2x_i}}^{D_n}\left(\mathrm{comp}_{1/2}(2x_i,\frac{1}{2}) \otimes J(\overline{M}_{1/2}^h, \overline{N}_{1/2}^h)\right)$), where $\mathrm{comp}_{1/2}(2x_i,\frac{1}{2})$ is the twisted one-sided $t$-complementary series \eqref{eq-compat}, and
$$\overline{N}_{1/2} \longleftrightarrow \begin{pmatrix} x_1  & \dots & x_{i-1} & x_{i+1} & \dots & x_k \\ y_1 & \dots & y_{i-1} & y_{i+1} & \dots & y_k \end{pmatrix}.$$

By induction in stages, $\pi_{r+1}$ is a subquotient of
\begin{equation} \label{eq-unit2}
\mathrm{Ind}_{A_{4x_i-1} \times D_{2n-4x_i}}^{D_{2n}}\left( \mathrm{Ind}_{A_{2x_i-1} \times A_{2x_i-1}}^{A_{4x_i-1}}\left(\mathrm{comp}_{1/2}(2x_i,\frac{1}{2}) \otimes \mathrm{comp}_{1/2}(2x_i,\frac{1}{2})^h\right) \otimes \pi^- \right)
\end{equation}
with $\pi^- := J_{D_{2n-4x_i}}\left(J(\overline{M}_{1/2} \cup \overline{M}_{1/2}^h, \overline{N}_{1/2} \cup \overline{N}_{1/2}^h\right)$
satisfying the induction hypothesis. Therefore the theorem holds for $\pi_{r+1}$ by induction in stages.


\medskip
The case for $y_i + 1 = x_{i+1}$ is identical --  in such a case, take $\mathrm{comp}_{1/2}(x_{i+1}+y_i,\frac{1}{2})$ instead of $\mathrm{comp}_{1/2}(2x_i,\frac{1}{2})$ in \eqref{eq-unit2}, and
$$\overline{N}_{1/2} \longleftrightarrow \begin{pmatrix} x_1  & \dots & x_{i-1} & x_i & x_{i+2} & \dots & x_k \\ y_1 & \dots & y_{i-1} & y_{i+1} &  y_{i+2} & \dots & y_k \end{pmatrix}$$
instead. So the theorem follows. \end{proof}

Combined with the result in Appendix \ref{sec-unipotent}, this implies that all representations in Theorem \ref{thm-main}
are subquotients of modules inducing from some (twisted) Stein's complementary series tensored with a unipotent representation.
Also, by taking Equations \eqref{eq:inclusion} and \eqref{eq-bv} into account, we have verified the conjectured structure
of all genuine representations in $\widehat{G}$ for $Spin(2n,\mathbb{C})$ described in the introduction.


\section{Intertwining operators} \label{sec-in}
In view of Section \ref{sec-unitary}, one only needs to show that the irreducible modules $J(\mu,\nu) = J(M_{1/2} \cup M_{1/2}^h, N_{1/2} \cup N_{1/2}^h)$ whose parameters does not satisfy the hypothesis of Theorem \ref{thm-main}
is not unitary. By the discussions in Section \ref{sec-intertwine1}, $J(\mu,\nu)$ can be seen as the image of the long intertwining operator. In this section, we will study certain intertwining operators which will be used in Section \ref{sec-nonunit} extensively to determine the non-unitarity of certain $J(\mu,\nu)$'s. We refer the reader to \cite[III.3]{D75} as well as \cite{B10} for some basic properties of intertwining operators on complex or more generally all quasi-split reductive groups.

\subsection{Basics of intertwining operators} \label{sec-inter}
Given $\nu\in \mathfrak{a}^*$, we define
\[\Delta^-(\nu)=\{\alpha\in \Delta^+(\mathfrak{g},\mathfrak{h})\ |\ \langle\nu,\alpha\rangle<0 \}.\]
For $w\in W(\mathfrak{g},\mathfrak{h})$, there exists an intertwining operator $i_{w}(\mu,\nu)$ such that
\[\begin{aligned}i_{w}(\mu,\nu) : X(\mu,\nu)& \to  X(w\mu,w\nu)\\  f & \mapsto  \int_{w^{-1}Nw\cap \overline{N}} f(\overline{n}w^{-1}g) d\overline{n},\end{aligned}\]
where $N$ (resp. $\overline{N}$)  is the unipotent subgroup corresponding to the set of positive (resp. negative) roots.
For convenience, we also denote it by $i_{w}$. When $\Delta^-(\nu)\subset \Delta^-(w\nu)$, the integral defining $i_{w}(\mu,\nu)$ converges absolutely. Moreover, if $w=w_1w_2$ with $l(w)=l(w_1)+l(w_2)$, then
\[i_{w}(\mu,\nu)=i_{w_1}(w_2\mu,w_2\nu)\circ i_{w_2}(\mu,\nu).\]

For any $K$-module $\Gamma$, we study how the {\bf $\Gamma$-isotropic space} $\mathrm{Hom}_K[\Gamma, X(\mu,\nu)]$ maps to the $\Gamma$-isotropic space  $\mathrm{Hom}_K[\Gamma,X(w\mu,w\nu)]$ under $i_{w}(\mu,\nu)$ -- by Frobenius reciprocity, we know that the space $\mathrm{Hom}_K[\Gamma,X(\mu,\nu)]$ is isomorphic to the $\mu\text{-weight space of}\ \Gamma,$ denoted by $\Gamma^{\mu}$.
Then the intertwining operator restricting to the $\Gamma$-isotropic space is given by
\begin{align*}\phi_{w}(\mu,\nu): \Gamma^{\mu} &  \to \Gamma^{w\mu}\\
     x &  \mapsto \int_{w^{-1}Nw\cap \overline{N}} b(\overline{n}w^{-1})^{\nu+2\rho} \Gamma(k(\overline{n}w^{-1})^{-1})x d\overline{n},\end{align*}
 where $b(\cdot), k(\cdot)$ means taking the component of the decomposition $G=BK$. We can replace $\phi_w(\mu,\nu)$ by  $(\int b(\overline{n}w^{-1})^{\nu+2\rho} d\overline{n})^{-1} \cdot \phi_w(\mu,\nu)$, so $\phi_w(\mu,\nu)$ is identity on $\Gamma^{\mu}$ when $\Gamma = \mathcal{V}_{\{\mu\}}$ is the $K$-type with extremal weight $\mu$.

Suppose $w = s_{\alpha}$ is a simple reflection, where $\alpha$ is a simple root.  Let $\mathfrak{g}_{\alpha}$ be the $\alpha$-root space of $\mathfrak{g}$. Let $H_{\alpha}\in \mathfrak{h},E_{\alpha}\in \mathfrak{g}_{\alpha}, \theta E_{\alpha}\in \mathfrak{g}_{-\alpha}$ be the $\mathfrak{sl}(2,\mathbb{C})$-triple (here $\theta$ is the restriction of the Cartan involution of $\mathfrak{g}$), i.e.
\[[H_{\alpha}, E_{\alpha}]=2E_{\alpha},\quad \quad [H_{\alpha}, \theta E_{\alpha}]=-2\theta E_{\alpha},\quad \quad
[E_{\alpha},\theta E_{\alpha}]=-H_{\alpha}.\]

Let $\mathfrak{g}(\alpha)=\mathbb{C}\{H_{\alpha},E_{\alpha},\theta E_{\alpha}\}$, and let $G(\alpha)$ be the connected subgroup of $G$ with Lie subalgebra $\mathfrak{g}(\alpha)$. Then 
$i_{s_{\alpha}}(\mu,\nu)$ can be computed over $G(\alpha)$. In particular,
 and $i_{s_{\alpha}}(\mu,\nu)|_{\Gamma\text{-isotropic space}}$ is determined by the action of the maximal compact subgroup $U(\alpha)$ of $G(\alpha)$ on $\Gamma$. So $\phi_{s_{\alpha}}(\mu,\nu)|_{\Gamma^{\mu}}$ can also be computed over $G(\alpha)$, which is exactly the intertwining operator over the group $SL(2,\mathbb{C})$, see \cite[III.3.6]{D75}.
For latter calculation, define $t_{\alpha}\in U(\alpha)$ by
\begin{equation}\label{t_a}
t_{\alpha} := \mathrm{exp}\big(\frac{\pi }{2} (E_{\alpha} + \theta E_{\alpha})\big).
\end{equation}

We make the following assumption, which is satisfied in the cases that we are studying later:
\begin{assumption}
Every irreducible component of $\Gamma|_{U(\alpha)}$ has dimension $\leq 4$ for any simple root $\alpha$.
\end{assumption}
There are two possibilities under the above assumption,  and the corresponding $\phi_{s_{\alpha}}$ (normalized to be identity on the lowest $K$-type) can be computed using \cite[III.3.6]{D75} as follows: 

\medskip
\noindent {\bf Case (i).} Assume that $\langle\mu,\alpha\rangle=0$, then every irreducible component of $\Gamma|_{U(\alpha)}$ has dimension $1$ or $3$, and $\phi_{s_{\alpha}}: \Gamma^{\mu}\to \Gamma^{\mu}$ is given by
\begin{equation}\label{phi_++}\phi_{s_{\alpha}}(x)=\left\{\begin{array}{ll} x, & t_{\alpha}x=x,\\
  \frac{2-\langle \check{\alpha},\nu\rangle}{2+\langle \check{\alpha},\nu\rangle} x, & t_{\alpha}x=-x.\end{array}\right.\end{equation} 
Here, $\check{\alpha}$ is the coroot corresponding to $\alpha$.
 Moreover,  the intertwining operator on the $\Gamma$-isotropic space is determined by the action of the Weyl group on the zero weight space of $\Gamma$.

\medskip
\noindent {\bf Case (ii).} Assume that $\langle\mu,\alpha\rangle\neq 0$, then every irreducible components of $\Gamma|_{U(\alpha)}$ has dimension $2$ or $4$. Assume $x\in \Gamma^{\mu}$ in the irreducible $U(\alpha)$-submodule $V\subset \Gamma$, then
\begin{equation}\label{phi_+-}\phi_{s_{\alpha}}(x)=\left\{\begin{array}{ll}
 t_{\alpha}x, & \dim V=2, \\
     \frac{-3+\langle \check{\alpha},\nu\rangle}{3+\langle \check{\alpha},\nu\rangle} t_{\alpha}x, & \dim V=4.\end{array}\right.\end{equation}
 
\subsection{Type A intertwining operators} \label{sec-interd}
For the rest of this paper, we use the shorthand of the $(\mu,\nu)$-parameters
by
$$(\nu_1^{\epsilon_1}, \dots, \nu_n^{\epsilon_n}) := \left(\mu = (\frac{\epsilon_1}{2},\dots, \frac{\epsilon_n}{2});\ \nu = (\nu_1, \dots, \nu_n)\right).$$
Also, write $\mathcal{A}_{\bullet}$ denotes the irreducible $\widetilde{U(n)}$-module with highest weight $\bullet$.

\medskip
The following results generalize \eqref{phi_++} and \eqref{phi_+-}. We omit the proof which are some direct computations by induction on $n$.
\begin{lemma}\label{int_of_1/2_-1/2} 
Let $\nu=(\nu_1 < \dots < \nu_{n-1};x)$ with $x \in \mathbb{R}$, $\nu_{k+1}-\nu_k=2$ for all $k$, and $\delta \in \{+,-\}$. 

\noindent (a) Consider the intertwining operator
\[i_w: \mathrm{Ind}_{\widetilde{GL(n-1) \times GL(1)}}^{\widetilde{GL(n)}}\big(J(\nu_1^{\delta}, \dots, \nu_{n-1}^{\delta}) \otimes J(x^{\delta})\big)\longrightarrow \mathrm{Ind}_{\widetilde{GL(1) \times GL(n-1)}}^{\widetilde{GL(n)}}\big( J(x^{\delta}) \otimes J(\nu_1^{\delta}, \dots, \nu_{n-1}^{\delta})\big),\]
normalized such that it is equal to $1$ on the lowest $K$-type isotypic space $\mathcal{A}_{(\frac{1}{2},\frac{1}{2},\dots,\frac{1}{2},\frac{1}{2})}$. 
Then the $\mathcal{A}_{(\frac{3}{2},\frac{1}{2},\dots,\frac{1}{2},-\frac{1}{2})}$-isotropic subspace of the induced modules is one-dimensional, and $i_w$ is equal to the scalar
$$\frac{-2+\nu_1-x}{2+\nu_{n-1}-x}$$ 
upon restricting to the subspaces.

\noindent (b) Consider the intertwining operator
\[i_w: \mathrm{Ind}_{\widetilde{GL(n-1) \times GL(1)}}^{\widetilde{GL(n)}}\big(J(\nu_1^{\delta}, \dots, \nu_{n-1}^{\delta}) \otimes J(x^{-\delta})\big)\longrightarrow \mathrm{Ind}_{\widetilde{GL(1) \times GL(n-1)}}^{\widetilde{GL(n)}}\big( J(x^{-\delta}) \otimes J(\nu_1^{\delta}, \dots, \nu_{n-1}^{\delta})\big),\]
normalized such that it is equal to $1$ on the lowest $K$-type isotypic space $\mathcal{A}_{(\frac{1}{2},\frac{1}{2},\dots,\frac{1}{2},\frac{-1}{2})}$.
Then the $\mathcal{A}_{(\frac{3}{2},\frac{1}{2},\dots,\frac{1}{2},-\frac{3}{2})}$-isotropic subspace of the induced modules is one-dimensional, and $i_w$ is equal to the scalar
$$\frac{-3+\nu_1-x}{3+\nu_{n-1}-x}$$ 
upon restricting to the subspaces.
\end{lemma}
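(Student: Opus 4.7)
The plan is to prove both formulas by induction on $n$, using the decomposition of $i_w$ into simple reflection intertwiners and applying the explicit formulas \eqref{phi_++} and \eqref{phi_+-} at each step. A preliminary reduction confirms that the two $\mathcal{A}_\gamma$-isotropic subspaces in the lemma are one-dimensional: by Frobenius reciprocity for parabolic induction, each such isotropic space equals $\mathrm{Hom}_{K \cap M}[\mathcal{A}_\gamma|_{K \cap M}, \sigma|_{K \cap M}]$, and the $GL$ branching rule $\lambda_i \geq \mu_i \geq \lambda_{i+1}$ applied to $\gamma = (\frac{3}{2}, \frac{1}{2}, \dots, \frac{1}{2}, -\frac{1}{2})$ for part (a), or $\gamma = (\frac{3}{2}, \frac{1}{2}, \dots, \frac{1}{2}, -\frac{3}{2})$ for part (b), yields multiplicity exactly one in each case.

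For the base case $n = 2$, the Weyl element is the simple reflection $s_\alpha$ with $\alpha = e_1 - e_2$. Part (a) falls into Case (i) of Section \ref{sec-inter} since $\langle \mu, \alpha \rangle = 0$: a direct computation of $\mathrm{Ad}(t_\alpha)$ on the Cartan component of the $SU(2)$-adjoint factor inside $\mathcal{A}_{(3/2, -1/2)} \cong \mathcal{A}_{(1/2, 1/2)} \otimes V_{\mathrm{adj}}$ shows the relevant $\mu$-weight vector lies in the $(-1)$-eigenspace of $t_\alpha$, and \eqref{phi_++} with $\langle \check{\alpha}, \nu \rangle = \nu_1 - x$ yields the stated scalar. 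Part (b) falls into Case (ii) since $\langle \mu, \alpha \rangle \neq 0$: the restriction of $\mathcal{A}_{(3/2, -3/2)}$ to $U(\alpha)$ contains a four-dimensional irreducible, and \eqref{phi_+-} with $\dim V = 4$ directly gives the claimed scalar.

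For the inductive step, factor $i_w = i_{s_1} \circ i_{w'}$, where $w' = s_2 s_3 \cdots s_{n-1}$ is the analogous cyclic shift on the last $n - 1$ positions, acting as a Weyl element of $\widetilde{GL(n-1)}$. Under the branching $\widetilde{U(n)} \supset \widetilde{U(1)} \times \widetilde{U(n-1)}$ (with $\widetilde{U(1)}$ at position $1$), the one-dimensional isotropic subspace of interest lies in a single summand whose $\widetilde{U(n-1)}$-component is the analogous $K$-type of $\widetilde{GL(n-1)}$. The induction hypothesis applied to this $\widetilde{GL(n-1)}$-operator contributes the scalar $\frac{-2+\nu_2-x}{2+\nu_{n-1}-x}$ for part (a) (respectively $\frac{-3+\nu_2-x}{3+\nu_{n-1}-x}$ for part (b)). Composing with the remaining simple reflection $i_{s_1}$, which again falls into Case (i) for part (a) and Case (ii) for part (b), and simplifying via the progression relation $\nu_2 - \nu_1 = 2$, the product telescopes to the formula stated in the lemma.

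The principal technical obstacle is keeping careful track, at each stage of the induction, of which eigenspace of $t_\alpha$ the distinguished vector in the one-dimensional isotropic subspace sits in: one must trace the $(K \cap M)$-invariant direction in the adjoint-type summand of $\mathcal{A}_\gamma$ through the branching $\widetilde{U(n)} \supset \widetilde{U(1)} \times \widetilde{U(n-1)}$ and confirm that its $t_{\alpha_1}$-eigenvalue is compatible with the cancellations produced by $\nu_{k+1} - \nu_k = 2$. With this done, the formulas of Section \ref{sec-inter} make the step-by-step evaluation entirely routine.
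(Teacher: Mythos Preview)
Your proposal is correct and follows exactly the approach the paper indicates: the paper omits the proof entirely, stating only that it consists of ``some direct computations by induction on $n$,'' and your factorization $i_w = i_{s_1}\circ i_{w'}$ together with the formulas \eqref{phi_++} and \eqref{phi_+-} is precisely that computation. Your identification of the main technical obstacle---tracking the $t_\alpha$-eigenspace of the distinguished one-dimensional direction through the branching---is accurate and is the only point where care is genuinely needed.
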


\subsection{Injectivity of certain intertwining operators} \label{sec-inject}
We now apply the results in the previous sections to study certain intertwining operators
appearing in Section \ref{sec-nonunit}.

\begin{proposition}\label{inj_v_x}
Let $p+q=n$, and $\nu = (\nu_1 < \dots < \nu_p; \xi_1 \leq \dots \leq \xi_q)$ be such that $\nu_{k+1}-\nu_k = 2$ for all $k$. For $\delta, \epsilon_l \in \{+,-\}$, consider the intertwining operator
$$\iota: \mathrm{Ind}_{\widetilde{GL(p) \times GL(q)}}^{\widetilde{GL(n)}}\big(J(\nu_1^{\delta}, \dots, \nu_p^{\delta}) \otimes J(\xi_1^{\epsilon_1}, \dots \xi_q^{\epsilon_q})\big)\longrightarrow \mathrm{Ind}_{\widetilde{GL(q) \times GL(p)}}^{\widetilde{GL(n)}}\big(J(\xi_1^{\epsilon_1}, \dots \xi_q^{\epsilon_q}) \otimes J(\nu_1^{\delta}, \dots, \nu_p^{\delta})\big).$$
Suppose
$\xi_{l}\neq \nu_1-2$ or $\nu_p+2$ (resp. $\nu_1-3$ or $\nu_p+3$) if $\epsilon_{l}= \delta$ (resp. $\epsilon_l = -\delta$) for all $1\leq l \leq q$, then $\iota$ is well-defined and injective over the $\mathcal{A}_{\eta}$-isotropic subspace for all $\eta = (\eta_1, \dots, \eta_n)$ with $|\eta_i| \leq \frac{3}{2}$.
\end{proposition}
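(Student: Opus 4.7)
The plan is to factor $\iota$ into a composition of simpler intertwining operators, one for each $\xi_l$, and then to decompose each of these into simple reflections. Concretely, the Weyl element $w$ realizing $\iota$ moves $(\xi_1,\dots,\xi_q)$ from the last $q$ positions to the first $q$; I would write $w = w_q\cdots w_1$ length-additively, where $w_l$ pushes a single $\xi_l$ past $\nu_p,\nu_{p-1},\dots,\nu_1$ by $p$ adjacent transpositions. Then $\iota = \iota_q\circ\cdots\circ\iota_1$ as intertwining operators, so it suffices to prove that each $\iota_l$ is well-defined and injective on every $\mathcal{A}_\eta$-isotypic subspace.

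Fix $\eta$ with $|\eta_i|\leq\frac{3}{2}$, so that adjacent coordinates of $\eta$ differ by at most $3$ and every irreducible component of $\mathcal{A}_\eta|_{U(\alpha)}$ has dimension at most $4$; the formulas \eqref{phi_++}--\eqref{phi_+-} then apply at each simple reflection appearing in the reduced expression of $w_l$. If $\epsilon_l = \delta$, then adjacent entries of $\mu$ agree at every step, we are in Case (i), and the non-trivial scalar on any dimension-$3$ summand is $\frac{2-(\nu_k-\xi_l)}{2+(\nu_k-\xi_l)}$ as $\xi_l$ passes $\nu_k$. If $\epsilon_l = -\delta$, we are in Case (ii) throughout, with scalar $\frac{-3+(\nu_k-\xi_l)}{3+(\nu_k-\xi_l)}$ on dimension-$4$ summands (composed with the invertible translation $t_{\alpha_k}$), and just $t_{\alpha_k}$ on dimension-$2$ pieces. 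Using $\nu_{k+1}-\nu_k=2$, the non-trivial scalar factors telescope across $k=1,\dots,p$: each interior pole at $\xi_l=\nu_k+2$ (respectively $\nu_k+3$) is cancelled by an adjacent zero of a neighboring factor, and the net operator on $\mathcal{A}_\eta^\mu$ becomes a rational function of $\xi_l$ whose only surviving zeros and poles lie at the boundary values $\xi_l\in\{\nu_1-2,\nu_p+2\}$ if $\epsilon_l=\delta$, and at $\xi_l\in\{\nu_1-3,\nu_p+3\}$ if $\epsilon_l=-\delta$. Lemma \ref{int_of_1/2_-1/2} illustrates the prototypical case of this telescoping.

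The hypotheses of the proposition rule out exactly these boundary values, so each $\iota_l$ is well-defined with a non-vanishing scalar, hence injective, on each $\mathcal{A}_\eta$-isotypic subspace; composing over $l$ yields the claim for $\iota$. The hard part will be verifying the telescoping for \emph{every} $\eta$ in the range $|\eta_i|\leq\frac{3}{2}$, not only for the two $\eta$'s treated in Lemma \ref{int_of_1/2_-1/2}. One has to identify, for each simple-reflection step, which $\mu$-weight vectors of $\mathcal{A}_\eta$ actually lie in a non-trivial $U(\alpha_k)$-summand, and then confirm that the arithmetic-progression structure of $(\nu_k)$ forces the surviving product to depend only on $\nu_1$, $\nu_p$, and $\xi_l$. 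This amounts to a careful bookkeeping exercise extending, rather than altering, the argument behind Lemma \ref{int_of_1/2_-1/2}.
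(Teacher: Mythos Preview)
Your approach---induct on $q$, peel off one $\xi_l$ at a time, and invoke the telescoping scalar formula of Lemma~\ref{int_of_1/2_-1/2} for each single-$\xi$ step---is exactly the paper's. Two details you should tighten. First, your $\iota_l$'s are operators between full principal series, but the proposition concerns the induced module from the subquotient $J(\xi_1^{\epsilon_1},\dots,\xi_q^{\epsilon_q})$; the paper notes that because the $\xi$-parameters are anti-dominant this subquotient is a genuine submodule at every intermediate stage, so injectivity on the larger modules suffices. Second, your telescoping picture (``interior poles cancel with adjacent zeros'') is the right intuition but is not literally a product collapse on each weight vector of $\mathcal{A}_\eta$: for general $\eta$ the $\mu$-weight space is multi-dimensional and different basis vectors see different subsets of the non-trivial scalars, so the bookkeeping you flag as the ``hard part'' is genuinely needed and is precisely what the paper packages into Lemma~\ref{int_of_1/2_-1/2} plus induction in stages rather than a bare scalar product.
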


\begin{proof}
We prove by induction on $q$. The base case $q=0$ is trivially true. Assume that it is true for $q-1$, then for the case $q$, write $\iota$ as the composition of the following $\iota''\circ \iota'$:
\begin{align*}
X((\nu_1^{\delta} < \dots < \nu_p^{\delta}; \xi_{1}^{\epsilon_1} \leq \dots \leq \xi_q^{\epsilon_q}))
&\xrightarrow{\iota'}
X(\xi_1^{\epsilon_1}; \nu_1^{\delta} < \dots < \nu_{p}^{\delta}; \xi_{2}^{\epsilon_2} \leq \dots \leq \xi_{q}^{\epsilon_q})\\ &\xrightarrow{\iota''} X(\xi_1^{\epsilon_1}\leq \xi_{2}^{\epsilon_2} \leq \dots \leq \xi_{q}^{\epsilon_q}; \nu_1^{\delta} < \dots < \nu_{p}^{\delta}).
\end{align*}

One can restrict the domain of the above intertwining operator into 
\begin{align*}
&\mathrm{Ind}_{\widetilde{GL(p) \times GL(1) \times GL(q-1)}}^{\widetilde{GL(n)}}\big(J(\nu_1^{\delta}, \dots, \nu_p^{\delta}) \otimes J(\xi_1^{\epsilon_1}) \otimes J(\xi_2^{\epsilon_2} \dots \xi_q^{\epsilon_q})\big) \\\xrightarrow{\iota'}\  
&\mathrm{Ind}_{\widetilde{GL(1) \times GL(p) \times GL(q-1)}}^{\widetilde{GL(n)}}\big(J(\xi_1^{\epsilon_1}) \otimes J(\nu_1^{\delta}, \dots, \nu_p^{\delta}) \otimes J(\xi_2^{\epsilon_2} \dots \xi_q^{\epsilon_q})\big) \\
\xrightarrow{\iota''}\  
&\mathrm{Ind}_{\widetilde{GL(1) \times GL(q-1) \times GL(p)}}^{\widetilde{GL(n)}}\big(J(\xi_1^{\epsilon_1}) \otimes J(\xi_2^{\epsilon_2} \dots \xi_q^{\epsilon_q}) \otimes J(\nu_1^{\delta}, \dots, \nu_p^{\delta}) \big)
\end{align*}
Note that $\mathrm{Ind}_{\widetilde{GL(p) \times GL(q)}}^{\widetilde{GL(n)}}\big(J(\nu_1^{\delta}, \dots, \nu_p^{\delta}) \otimes J(\xi_1^{\epsilon_1}, \dots \xi_q^{\epsilon_q})\big)$ is a submodule of the domain in the above chain of operators (due to the fact that the $\xi$-parameters are anti-dominant), so the injectivity of $\iota$ can be obtained from that of the above operators.

By our hypothesis on $\xi$, Lemma \ref{int_of_1/2_-1/2} and induction in stages imply that $\iota'$ is well-defined on the $\mathcal{A}_{\eta}$-isotropic spaces,  for $\xi_1\neq \nu_p+2$ (resp. $\nu_p+3$) if $\epsilon_1=\delta$ (resp. $\epsilon_1=-\delta$); and injective for $\xi_1\neq \nu_1-2$ (resp. $\nu_1-3$) if $\epsilon_1=\delta$, and by induction we know that $\iota''$ is also well defined and injective over the $\mathcal{A}_{\eta}$-isotropic spaces. Hence, the lemma also holds for $\iota=\iota''\circ \iota'$.
\end{proof}

\begin{proposition}\label{type_A_inj}
Let $p+q=n$, and $\nu = (\xi_1 \leq \dots \leq \xi_q; \nu_1 < \dots < \nu_p)$ be such that $\nu_{k+1}-\nu_k = 2$ for all $k$. 
Let $z \in S_n$ be such that $z\nu$ is non-increasing. Consider
 the intertwining operator
$$\iota := \iota_z: X\left(\xi_1^{\epsilon_1}, \dots, \xi_q^{\epsilon_q}; \nu_1^{\delta}, \dots, \nu_p^{\delta}\right) \longrightarrow X\left(z(\xi_1^{\epsilon_1}, \dots, \xi_q^{\epsilon_q}; \nu_1^{\delta}, \dots, \nu_p^{\delta})\right).$$
Suppose $\xi_{l}\neq \nu_{p}+2$ (resp. $\nu_{p}+3$) if $\epsilon_{l}= \delta$ (resp. $\epsilon_l = -\delta$) for all $1\leq l \leq q$, then the restricted map 
$$\iota|_{\mathrm{Ind}_{\widetilde{GL(q) \times GL(p)}}^{\widetilde{GL(n)}}\left(J(\xi_1^{\epsilon_1}, \dots, \xi_q^{\epsilon_q}) \otimes J(\nu_1^{\delta},\dots,\nu_p^{\delta})\right)}$$
is injective over the $\mathcal{A}_{\eta}$-isotropic subspace for all $\eta = (\eta_1, \dots, \eta_n)$ with $|\eta_i| \leq \frac{3}{2}$.
\end{proposition}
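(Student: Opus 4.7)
The approach is to reduce the claim to Lemma \ref{int_of_1/2_-1/2} and Proposition \ref{inj_v_x} via induction on $q$. For the base case $q = 0$, $z$ is the longest element of $S_p$ reversing the arithmetic progression $\nu_1 < \dots < \nu_p$, and the restricted map $\iota_z|_{J(\nu_1^\delta, \dots, \nu_p^\delta)}$ is a composition of simple reflections within the $\nu$-block. Each simple reflection acts on the $\mathcal{A}_\eta$-isotropic subspace by a scalar from \eqref{phi_++} or \eqref{phi_+-}, and the fixed gap $\nu_{k+1} - \nu_k = 2$ keeps these scalars bounded away from the critical values $0, \pm 2, \pm 3$ (for $|\eta_i| \leq \tfrac{3}{2}$), yielding injectivity.

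For the inductive step, I factor $\iota_z = \iota_{z_2} \circ \iota_{z_1}$, where $\iota_{z_1}$ moves the coordinate $\xi_q^{\epsilon_q}$ from position $q$ to its target slot in $z(\xi;\nu)$ while preserving the relative order of all other coordinates, and $\iota_{z_2}$ sorts the remaining parameter $(\xi_1, \dots, \xi_{q-1}; \nu_1, \dots, \nu_p)$ into non-increasing order. Since $\xi_q$ is the maximum of the $\xi$-block (by the non-decreasing hypothesis on $\xi$), the operator $\iota_{z_1}$ amounts to swapping $\xi_q^{\epsilon_q}$ past some initial segment $(\nu_1, \dots, \nu_{j_0})$ of the $\nu$-block, where $j_0$ is the number of $\nu_j < \xi_q$. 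This is precisely the setting of Lemma \ref{int_of_1/2_-1/2}(a) if $\epsilon_q = \delta$, or (b) if $\epsilon_q = -\delta$, applied to the arithmetic progression block of length $j_0$. The hypothesis $\xi_q \neq \nu_p + 2$ (resp.\ $\nu_p + 3$) ensures the nonvanishing of the denominator of the Lemma \ref{int_of_1/2_-1/2} scalar in the extreme case $j_0 = p$ (where $\xi_q < \nu_1$ and must cross the entire $\nu$-block); for intermediate $j_0 < p$, the placement forces $\nu_{j_0} < \xi_q < \nu_{j_0+1} = \nu_{j_0} + 2$, which automatically ensures $\xi_q \neq \nu_{j_0} + 2$ (and likewise $\xi_q \neq \nu_{j_0} + 3$ for the $\epsilon_q = -\delta$ case). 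After $\iota_{z_1}$, the operator $\iota_{z_2}$ acts on a parameter satisfying the same hypotheses with $q - 1$ many $\xi$'s and the unchanged $\nu$-block, so its injectivity on the corresponding isotropic subspace follows from the inductive hypothesis.

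The main obstacle is verifying that this factorization is length-additive, so that $\iota_z = \iota_{z_2} \circ \iota_{z_1}$ holds genuinely as a composition of intertwining operators on the full principal series, and that the restriction to $\mathrm{Ind}(J(\xi^\epsilon) \otimes J(\nu^\delta)) \subset X(\xi; \nu)$ commutes with each intermediate step. Since both $J(\xi^\epsilon)$ and $J(\nu^\delta)$ are submodules of their respective principal series (their parameters being antidominant in the Langlands-Zhelobenko convention), the natural embedding of the inducing module into $X(\xi; \nu)$ is preserved by each simple-reflection step, and the scalar formulas of Lemma \ref{int_of_1/2_-1/2} transfer faithfully to the $\mathcal{A}_\eta$-isotropic subspaces under consideration. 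The subtle point lies in handling the potentially degenerate intermediate configuration where $\xi_q$ would extend $(\nu_1, \dots, \nu_p)$ into a longer arithmetic progression on the right (excluded by the stated hypothesis) as opposed to on the left (permitted and benign under the given constraint on $|\eta_i|$); this asymmetry is what makes only the upper-bound condition $\xi_l \neq \nu_p + 2$ necessary.
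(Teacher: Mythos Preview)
Your approach — induction on $q$, peeling off $\xi_q$ via Lemma~\ref{int_of_1/2_-1/2} and recursing — is essentially the paper's. There are, however, two errors in the execution.

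First, the base-case justification is wrong. You claim the gap $\nu_{k+1}-\nu_k=2$ keeps the simple-reflection scalars away from the critical values, but $\pm 2$ \emph{is} critical: the scalar in \eqref{phi_++} has a pole when $\langle\check\alpha,\nu\rangle=-2$, which is exactly what occurs for adjacent $\nu_k,\nu_{k+1}$ here. The correct reason the base case is trivial — and why the paper dismisses it in one phrase — is that $J(\nu_1^\delta,\dots,\nu_p^\delta)$ is one-dimensional (a twisted character), so the only $\mathcal{A}_\eta$-isotypic component present is the lowest $K$-type, on which $\iota$ is normalized to be the identity.

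Second, your inductive step is internally inconsistent and slightly off. You define $j_0$ as the number of $\nu_j<\xi_q$, so $j_0=p$ means $\xi_q>\nu_p$, not $\xi_q<\nu_1$ as you write. More substantively, when $j_0<p$, after inserting $\xi_q$ between $\nu_{j_0}$ and $\nu_{j_0+1}$ the remaining sort involves only $(\xi_1,\dots,\xi_{q-1};\nu_1,\dots,\nu_{j_0})$: the tail $\nu_{j_0+1},\dots,\nu_p$ is already in its final position and is fixed by $z$, since every $\xi_l\leq\xi_q<\nu_{j_0+1}$. So the recursion uses the \emph{truncated} $\nu$-block, not the ``unchanged'' one; one must then verify the hypothesis $\xi_l\neq\nu_{j_0}+2$ (resp.\ $+3$) for this shorter block, which follows automatically from $\xi_l<\nu_{j_0}+2$. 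The paper handles this via an explicit case split: the case $\nu_k\leq\xi_q<\nu_{k+1}$ is reduced, by parabolic induction, to the proposition for the shorter parameter $(\xi_1,\dots,\xi_q;\nu_1,\dots,\nu_k)$, which then falls under the case $\xi_q\geq\nu_k$ already treated. With these corrections, your argument and the paper's coincide.
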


\begin{proof}
We prove by induction on $q$, which is trivial for the base case $q=0$. Assume that the statement is true for $q-1$.
If $\xi_{q}\leq \nu_1$, then $(\xi_1,\dots,\xi_{q},\nu_1,\dots,\nu_{p})$ is an non-increasing sequence. So $\iota = \iota_{e}$ is the identity map and the lemma is clearly true. On the other hand, if $\xi_{q} \geq \nu_{p}$, then $\iota$ is the composition of the following $\iota'$ and $\iota''$ :
\begin{align*}
&\ \mathrm{Ind}_{\widetilde{GL(q) \times GL(p)}}^{\widetilde{GL(n)}}\left(J(\xi_1^{\epsilon_1} \leq \dots \leq \xi_{q}^{\epsilon_q})\otimes J(\nu_{1}^{\delta} < \dots <\nu_{p}^{\delta})\right)\\ \xrightarrow{\iota'}&\   
\mathrm{Ind}_{\widetilde{GL(q-1) \times GL(p)\times GL(1)}}^{\widetilde{GL(n)}}\left(J(\xi_1^{\epsilon_1} \leq \dots \leq \xi_{q-1}^{\epsilon_{q-1}})\otimes J(\nu_1^{\delta}, \dots, \nu_{p}^{\delta})\otimes J(\xi_{q}^{\epsilon_q})\right) \\ \xrightarrow{\iota''} &\
X\left(z(\xi_1^{\epsilon_1}, \dots , \xi_q^{\epsilon_q}; \nu_1^{\delta} , \dots, \nu_p^{\delta})\right).
\end{align*}
By hypothesis, $\xi_{q} \neq \nu_{p}+2$ (resp. $\nu_{p}+3$) if $
\epsilon_{q}=\delta$ (resp. $-\delta$). So
one can apply Lemma \ref{int_of_1/2_-1/2} and the `submodule of a parabolically induced module' argument in the proof of Proposition \ref{inj_v_x} to conclude that $\iota'$ is injective. Furthermore, we also know from induction that $\iota''$ is also injective over the image of $\iota'$.
Hence, the proposition holds for $\iota=\iota''\circ\iota'$.

If $\nu_{k}\leq \xi_{q}< \nu_{k+1}$ for some $1\leq k\leq p-1$, then $\xi_{1},\dots,\xi_{q-1}\leq \xi_{q}< \nu_{k+1}=\nu_{k}+2<\nu_{k}+3$. Hence, $z$ fix the last $p-k$ entries of $\nu$. By the above case, the proposition holds for   $(\xi_1^{\epsilon_1}, \dots, \xi_q^{\epsilon_q}; \nu_1^{\delta}, \dots, \nu_{k}^{\delta})$. Using parabolically induction, one gets the proposition for $(\xi_1^{\epsilon_1}, \dots, \xi_q^{\epsilon_q}; \nu_1^{\delta}, \dots, \nu_{p}^{\delta})$.
\end{proof}

\begin{remark}\label{similar_of_int_lemma}
In Section \ref{sec-nonunit}, we will apply the above results to $G = Spin(2n,\mathbb{C})$. In such a case, there are two choices of Levi subalgebras of type $A_{n-1}$, where the `other' type $A_{n-1}$ Lie algebra $\overline{\mathfrak{gl}}(n,\mathbb{C})$ has simple roots
$$\{e_i-e_{i+1}\ |\ 1\leq i\leq n-2\} \cup \{e_{n-1}+e_n\}.$$ 
\end{remark}

\subsection{Intertwining operator for nonreduced expressions} We conclude this section by the following trick, which will be used extensively in the next section.
\begin{proposition}\label{non_red_of_phi}
Let $w$ be an element in $W(D_n)$. Write
\begin{equation}\label{non-reduce_expand}w=s_{\alpha_m}\dots s_{\alpha_2}s_{\alpha_1}=s_{\beta_{m'}}\dots s_{\beta_2}s_{\beta_1},\end{equation}
for some simple reflections $s_{\alpha_{\bullet}}$ and $s_{\beta_{\bullet}}$, which are not assumed to be a reduced expression, so that the intertwining operators $\phi_{s_{\alpha_{\bullet}}}$ or $\phi_{s_{\beta_{\bullet}}}$ may not be well-defined.

Suppose there exists a subspace $V'$ of $\Gamma = \mathcal{V}_{\eta_i}$ such that
$\phi_{s_{\alpha_k}}$ (resp. $\phi_{s_{\beta_k}}$) is well-defined over $\phi_{s_{\alpha_{k-1}}\dots s_{\alpha_2} s_{\alpha_1}}(V')$ (resp. $\phi_{s_{\beta_{k-1}}\dots s_{\beta_2} s_{\beta_1}}(V')$)  for all $k$, then we have
\begin{equation}\label{expand_of_phi}
\phi_{s_{\alpha_m}}\circ\dots\circ\phi_{s_{\alpha_2}}\circ \phi_{s_{\alpha_1}}=\phi_{s_{\beta_{m'}}}\circ\dots\circ\phi_{s_{\beta_2}}\circ \phi_{s_{\beta_1}}
\end{equation}
over the subspace $V'$.
\end{proposition}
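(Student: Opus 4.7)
The plan is to argue by meromorphic continuation in $\nu$, appealing to the fact that the normalized intertwining operator $\phi_w(\mu,\nu)$ is a well-defined meromorphic function of $\nu$ depending only on $w \in W(D_n)$, not on how it is written as a product of simple reflections. Once this expression-independence is established, both sides of \eqref{expand_of_phi} agree with the single meromorphic operator $\phi_w(\mu,\nu)$ on $\Gamma$. The hypothesis that each $\phi_{s_{\alpha_k}}$ (resp.\ $\phi_{s_{\beta_k}}$) is well-defined on the iterated image $\phi_{s_{\alpha_{k-1}}\cdots s_{\alpha_1}}(V')$ (resp.\ $\phi_{s_{\beta_{k-1}}\cdots s_{\beta_1}}(V')$) then guarantees that both compositions specialize to finite operators on $V'$ at the given $\nu$, and these specializations must coincide.

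I would establish the expression-independence by induction on the length $N$ of a chosen decomposition $w = s_{\gamma_N}\cdots s_{\gamma_1}$. The base case is a reduced decomposition, for which the factorization $\phi_w = \phi_{s_{\gamma_\ell}}\circ\cdots\circ\phi_{s_{\gamma_1}}$ is classical: the unnormalized $i_w$ enjoys this factorization whenever $\ell(w)=\ell(w_1)+\ell(w_2)$ (Section \ref{sec-inter}), and the product of the simple normalization constants equals the normalization constant of the composition by the standard Gindikin--Karpelevich computation carried out in \cite{D75} and \cite{B10}.

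For the inductive step I would invoke the fact that any two decompositions of a fixed $w\in W(D_n)$ can be connected by a sequence of elementary moves: braid moves replacing $s_is_js_i\cdots$ by $s_js_is_j\cdots$ (with $m_{ij}\in\{2,3\}$ factors on each side in type $D$), and insertions or deletions of an adjacent pair $s_\alpha^2$. A braid move preserves the composition because both sides are \emph{reduced} expressions of a common element of length $m_{ij}$, so the base case applies to each side and yields the same $\phi$. For the pair move, one needs $\phi_{s_\alpha}(s_\alpha\mu,s_\alpha\nu)\circ\phi_{s_\alpha}(\mu,\nu) = \mathrm{id}$ as a meromorphic operator on $\Gamma$; this is a direct case-by-case computation on each $U(\alpha)$-irreducible component of $\Gamma$, using \eqref{phi_++} and \eqref{phi_+-} together with the sign change $\langle\chal,s_\alpha\nu\rangle = -\langle\chal,\nu\rangle$.

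The main obstacle will be precisely this cancellation check across the $U(\alpha)$-types appearing in the spin-relevant $K$-type $\Gamma = \mathcal{V}_{\eta_i}$. In Case (i) the scalar identity $\tfrac{2-\langle\chal,\nu\rangle}{2+\langle\chal,\nu\rangle}\cdot\tfrac{2+\langle\chal,\nu\rangle}{2-\langle\chal,\nu\rangle}=1$ is immediate. In Case (ii) with $\dim V = 2$, corresponding to spin-$\tfrac{1}{2}$ components of $\Gamma|_{U(\alpha)}$, the two successive applications of $t_\alpha$ produce $t_\alpha^{2}=-\mathrm{id}$ on $V$, so one has to show that the normalization convention used in \eqref{phi_+-} at the shifted parameter $(s_\alpha\mu,s_\alpha\nu)$ absorbs this sign, so that the net composition is genuinely $\mathrm{id}$; the analogous check in the $\dim V = 4$ situation combines the scalar ratios with the action of $t_\alpha^{2}$. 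This sign-matching across the half-integer spin components is the only non-formal step, and executing it amounts to checking that the implicit identification of $\Gamma^\mu$ with $\Gamma^{s_\alpha^2\mu} = \Gamma^\mu$ used in the normalization of the second operator is the identity on $V^\mu$.
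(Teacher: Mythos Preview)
Your approach is essentially the paper's: establish the identity for generic $\nu$ using the reduced-expression factorization together with $\phi_{s_\alpha}\circ\phi_{s_\alpha}=\mathrm{id}$, then specialize by continuity (rationality in $\nu$); the only cosmetic difference is that you phrase the word-problem reduction via braid moves and $s_\alpha^2$-cancellations, whereas the paper inducts on $(m,m')$ using the deletion condition. The sign check in Case~(ii) that you correctly flag as the one non-formal step is exactly where the paper defers to the $SL(2,\mathbb{C})$ computations of \cite{D75} rather than computing directly.
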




\begin{proof}
Firstly, assume that $\nu$ satisfies
\begin{equation}\label{alpha_nu_neq_2or3}
\langle \check{\alpha},\nu\rangle\neq \pm 2,\ \text{or} \pm 3,\ \forall\ \alpha\in \Delta^+(\mathfrak{g},\mathfrak{h}).
\end{equation}
Then for any root $\alpha$,
$\phi_{s_{\alpha}}$ is well-defined and invertible over the corresponding weight space of $\mathcal{V}_{\eta_i}$ by the formulas in Section \ref{sec-inter}. Notice that \eqref{expand_of_phi} holds when the two expansions in \eqref{non-reduce_expand} are both reduced. We can prove \eqref{expand_of_phi} by induction on $(m,m')$ as follows. When $(m,m')=(0,0)$, it is trivial. Assume it is true for $(<m,\leq m')$ and $(\leq m,<m')$. By the formulas in Section \ref{sec-inter} and the results of $SL(2)$, we get that $\phi_{s_{\alpha}}\circ\phi_{s_{\alpha}}=1$. For example, in the case (i) of Section \ref{sec-inter}, assume that $s_{\alpha}x=-x$, then $\phi_{s_{\alpha}}(x)=\frac{2-\langle\check{\alpha},\nu\rangle}{2+\langle\check{\alpha},\nu\rangle}x$ and
\[\phi_{s_{\alpha}}\big(\phi_{s_{\alpha}}(x)\big)=\frac{2-\langle\check{\alpha},s_{\alpha}\nu\rangle}{2+\langle\check{\alpha},s_{\alpha}\nu\rangle}\cdot \left(\frac{2-\langle\check{\alpha},\nu\rangle}{2+\langle\check{\alpha},\nu\rangle}x\right)=x.\]
Assume that $s_{\alpha_m}\dots s_{\alpha_2}s_{\alpha_1}$ is not reduced. Let $1\leq k\leq m$ be the smallest number such that the length of $s_{\alpha_{k}}\dots s_{\alpha_2} s_{\alpha_1}$ is less than $s_{\alpha_{k-1}}\dots s_{\alpha_2} s_{\alpha_1}$, so $s_{\alpha_{k}}\dots s_{\alpha_2} s_{\alpha_1}=s_{\alpha_{k-2}'}\dots s_{\alpha_2'} s_{\alpha_1'}$ for some simple reflections $s_{\alpha'_{\bullet}}$. We have
\[\begin{aligned}
& \phi_{s_{\alpha_m}}\circ\dots\circ\phi_{s_{\alpha_2}}\circ \phi_{s_{\alpha_1}}
\\=&\ \phi_{s_{\alpha_m}}\circ\dots\circ\phi_{s_{\alpha_{k+1}}}\circ \phi_{s_{\alpha_{k-2}'}}\circ\dots\circ\phi_{s_{\alpha_2'}}\circ\phi_{s_{\alpha_1'}}
\\=&\ \phi_{s_{\beta_{m'}}}\circ\dots\circ\phi_{s_{\beta_2}}\circ \phi_{s_{\beta_1}},\end{aligned}\]
where the first equality is due to $\phi_{s_{\alpha_k}}\circ\phi_{s_{\alpha_k}}=1$ and \eqref{expand_of_phi} holds for reduced expansions, and the second equality is due to the induction for $(m-2,m')$. Therefore, the proposition holds for all $\nu$ satisfying condition \eqref{alpha_nu_neq_2or3}.

As for general $\nu$ such that \eqref{expand_of_phi} is well-defined, notice that the formulas of $\phi_{s_{\alpha}}$ in Section \ref{sec-inter} are rational functions over $\nu$. So \eqref{expand_of_phi} holds for these values by continuity arguments, and the proposition follows.
%
%
%
%
%
\end{proof}

\section{Proof of non-unitarity} \label{sec-nonunit}
In this section, we will prove that the Hermitian modules $\pi = J(M_{1/2} \cup M_{1/2}^h,\ N_{1/2} \cup N_{1/2}^h)$ not appearing in Theorem \ref{thm-main} are not unitary. As mentioned in the previous section, we will understand $\pi$ by studying the long intertwining operator on the principal series representation. More explicitly, let
$N_{1/2} \longleftrightarrow \begin{pmatrix}x_1&\dots&x_k
\\ y_1&\dots&y_k \end{pmatrix}$, and $w \in W(D_n)$ (resp. $w_h \in W(D_n)$) be such that $\phi := wN_{1/2}$ (resp. $\phi^h := w_hN_{1/2}^h$) is dominant. We also fix $\theta := wM_{1/2}$ and $\theta^h := w_hM_{1/2}^h$ throughout this section, so that
the image of the long intertwining operator
$$\iota_{w_0}: X(\theta,\phi) \longrightarrow X(w_0\theta,w_0\phi)  \quad \quad (\text{resp.}\ \iota_{w_0}: X(\theta^h,\phi^h) \longrightarrow X(w_0\theta^h,w_0\phi^h)).$$
is equal to $J(\theta,\phi) = J(M_{1/2},N_{1/2})$ (resp. $J(\theta^h,\phi^h) = J(M_{1/2}^h,N_{1/2}^h)$).

\begin{example}
    Let $N_{1/2} = \left(\frac{9}{2}, \frac{5}{2}, \frac{1}{2}, \frac{-3}{2}, \frac{-7}{2}; \frac{5}{2},\frac{1}{2} \right) \longleftrightarrow \begin{pmatrix} 3 & 2 \\ 2 & 0 \end{pmatrix}$. Then 
    {\small \begin{align*} 
        (M_{1/2}; N_{1/2}) = \left(\frac{9}{2}^+, \frac{5}{2}^+, \frac{1}{2}^+, \frac{-3}{2}^+, \frac{-7}{2}^+, \frac{5}{2}^+,\frac{1}{2}^+ \right),\ (M_{1/2}^h; N_{1/2}^h) = \left(\frac{-9}{2}^+, \frac{-5}{2}^+, \frac{-1}{2}^+, \frac{3}{2}^+, \frac{7}{2}^+, \frac{-5}{2}^+, \frac{-1}{2}^+ \right)
    \end{align*}}
and
\begin{align*}
        (\theta; \phi) = \left(\frac{9}{2}^+, \frac{7}{2}^-, \frac{5}{2}^+, \frac{5}{2}^+, \frac{3}{2}^-, \frac{1}{2}^+, \frac{1}{2}^+ \right), \
    (\theta^h; \phi^h) = \left(\frac{9}{2}^-, \frac{7}{2}^+, \frac{5}{2}^-, \frac{5}{2}^-, \frac{3}{2}^+, \frac{1}{2}^-, \frac{-1}{2}^+ \right)
    \end{align*}
\end{example}

In the next two subsections, we will study two basic occasions of non-unitarity (``Case I'' and ``Case II'') for some specific choices of $N_{1/2}$. In both cases, we will split the long intertwining operator $\iota_{w_0}$ into 
$$X(\theta,\phi) \xrightarrow{\iota_{w_1}} X(w_1\theta,w_1\phi) \xrightarrow{\iota_{w_2}} X(w_2w_1\theta,w_2w_1\phi) \xrightarrow{\iota_{w_3}} X(w_0\theta,w_0\phi)$$ 
for some choices of $w_1$, $w_2$, $w_3$ such that $w_3w_2w_1 = w_0$, and study the injectivity of restricted map $\iota_{w_{3}}|_{\mathrm{im}(\iota_{w_2})}$ (as well as their Hermitian dual counterparts).

\begin{remark} \label{rmk-inclusion}
In order to study the injectivity of $\iota_{w_{3}}|_{\mathrm{im}(\iota_{w_2})}$, one write $w_2=\tau_m\dots\tau_{1}$ for some choices of $\tau_i$'s, and study the injectivity of $\iota_{\tau_i}|_{\mathrm{im}(\iota_{\tau_{i-1}}\circ\dots\iota_{\tau_{1}}\circ\iota_{w_2})}$. Assume that  $\iota_{\tau_i}$ is a composition of some type A intertwining operators, we apply the same strategy as in the proof of Proposition \ref{inj_v_x} and Proposition \ref{type_A_inj} by embedding 
$\mathrm{im}(\iota_{\tau_{i-1}}\circ\dots\iota_{\tau_{1}}\circ\iota_{w_2}) \subseteq I_i$ into an induced module 
\[I_i := \mathrm{Ind}_{M_i}^G(J_{M_i}(\tau_{i-1}\dots\tau_{1}w_2w_1\theta,\tau_{i-1}\dots\tau_{1}w_2w_1\phi))\] 
for some Levi subgroup $M_i := \cdots \times A_{p-1} \times A_{q-1} \times \cdots$ of $G$. In such a case, the results in Section \ref{sec-interd} and \ref{sec-inject} can be applied to to conclude that $\iota_{\tau_{i}}|_{I_i}$ (and hence $\iota_{\tau_{i}}|_{\mathrm{im}(\iota_{\tau_{i-1}}\circ\dots\iota_{\tau_{1}}\circ\iota_{w_2})}$) is injective on certain spin-relevant $K$-types $\mathcal{V}_{\eta}$.

Assume that $\tau_{i-1}\dots\tau_{1}w_2w_1\phi$ is anti-dominant of $M_i$-type. To use the above strategy, one needs to check that $\mathrm{im}(\iota_{\tau_{i-1}}\circ\dots\iota_{\tau_{1}}\circ\iota_{w_2}) \subseteq I_i$.  Let $w_{M_i}$ be the longest element of the weyl group of $M_i$, since we focus on the spin-relevant $K$-types, it suffices to construct a well-defined intertwining operator
\[X(w_1\theta,w_1\phi)\to X\left(w_{M_i}^{-1}(\tau_{i-1}\dots\tau_{1}w_2w_1\phi),w_{M_i}^{-1}(\tau_{i-1}\dots\tau_{1}w_2w_1\phi)\right)\]
over all spin-relevant $K$-types. It would be clear that such an intertwining operator exists when this strategy appears in the proof of the next two subsections. 
\end{remark}

\subsection{Base Case I} \label{sec-u}
\begin{proposition}\label{single_mult}
Let $n = a+b$, and $N_{1/2} \longleftrightarrow \begin{pmatrix}a \\ b \end{pmatrix}$ with $b > a$. 
Then the representations $\pi = J(M_{1/2} \cup M_{1/2}^h,\ N_{1/2} \cup N_{1/2}^h)$ is the lowest $K$-type subquotient of the induced representation $\mathrm{Ind}_{A_{2n-1}}^{D_{2n}} \big(J_{A_{2n-1}}(M_{1/2} \cup M_{1/2}^h,\ N_{1/2} \cup N_{1/2}^h)\big),$
both having the same multiplicities of $\mathcal{V}_{\eta(i)}$ for $0 \leq i\leq n$.
\end{proposition}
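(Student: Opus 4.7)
The plan is to factor the long intertwining operator that defines $\pi$ through the induced module in question, and then invoke the injectivity results of Section \ref{sec-in} to transfer multiplicities of the spin-relevant $K$-types.

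First I would show that $\pi$ is the lowest $K$-type subquotient of
\[
I := \mathrm{Ind}_{A_{2n-1}}^{D_{2n}}\left(J_{A_{2n-1}}(M_{1/2}\cup M_{1/2}^h,\ N_{1/2}\cup N_{1/2}^h)\right).
\]
The inducing module $J_{A_{2n-1}}(\ldots)$ is itself the image of the long intertwining operator within type $A$, so parabolic induction realizes $I$ as a quotient of the full $D_{2n}$ principal series $X(M_{1/2}\cup M_{1/2}^h,\ N_{1/2}\cup N_{1/2}^h)$. By Proposition \ref{prop:BVprop}, $\pi$ is the unique lowest $K$-type subquotient of that principal series, and hence it must appear as the lowest $K$-type subquotient of the quotient $I$ as well.

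Next I would observe that each $\mathcal{V}_{\eta(i)}$ is a bottom-layer $K$-type for the induction from $A_{2n-1}$ to $D_{2n}$: the highest weight $\eta(i)$ differs from the lowest $K$-type weight $\mu$ by an integer linear combination of roots of $A_{2n-1}$. The bottom-layer machinery of Section \ref{sec-bottom} therefore yields
\[
[I : \mathcal{V}_{\eta(i)}] = [J_{A_{2n-1}}(M_{1/2}\cup M_{1/2}^h,\ N_{1/2}\cup N_{1/2}^h) : \mathcal{V}_{\eta(i)|_{A_{2n-1}}}].
\]

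The crucial remaining step is to show that the same multiplicity holds for $\pi$. Following the strategy outlined in Remark \ref{rmk-inclusion}, I would factor the defining intertwining operator as $\iota_{w_0}=\iota_{w_2}\circ \iota_{w_1}$, with $I$ appearing as a quotient of the image of $\iota_{w_1}$, and then show that $\iota_{w_2}$ is injective on each $\mathcal{V}_{\eta(i)}$-isotypic subspace of $I$. This would be done by decomposing $w_2$ into a sequence of type-$A$ Weyl elements and applying Propositions \ref{inj_v_x} and \ref{type_A_inj} one step at a time, together with the non-reduced expression manipulation of Proposition \ref{non_red_of_phi} to handle the passes through repeated roots. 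The hypothesis $b>a$ ensures that at every stage the relevant parameter differences avoid the forbidden values $\pm 2$ and $\pm 3$, so the injectivity hypotheses are met throughout.

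The main obstacle will be precisely this last step: one must choose a factorization of $w_2$ so that at each intermediate stage the parameters can be arranged into a type-$A$ configuration compatible with Propositions \ref{inj_v_x} and \ref{type_A_inj}, and then verify that the slack provided by $b>a$ survives the cumulative parameter shifts. Once injectivity of $\iota_{w_2}$ on each spin-relevant $K$-type is established, the multiplicities on $\mathcal{V}_{\eta(i)}$ are preserved, and both statements of the proposition follow at once.
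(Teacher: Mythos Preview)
Your outline captures the spirit of the argument but has two genuine gaps, one minor and one serious.

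\medskip
\textbf{Minor.} The claim that each $\mathcal{V}_{\eta(i)}$ is bottom-layer for the induction from $A_{2n-1}$ to $D_{2n}$ is false for $i\geq 2$: the difference $\eta(i)-\mu$ has coordinate sum $i$ (for $i$ even) or $i-1$ (for $i$ odd), hence lies in the $A_{2n-1}$ root lattice only when $i\in\{0,1\}$. Fortunately this step is not needed: once you know $\iota_{w_2}$ is injective on each $\mathcal{V}_{\eta(i)}$-isotypic space of $I$, the equality $[\pi:\mathcal{V}_{\eta(i)}]=[I:\mathcal{V}_{\eta(i)}]$ follows directly.

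\medskip
\textbf{Serious.} Your two-step factorization $\iota_{w_0}=\iota_{w_2}\circ\iota_{w_1}$ does not realize $I$ as the image of $\iota_{w_1}$. If you start from the $D_{2n}$-dominant parameter $(\theta,\phi)$, the $\mu$-component already has mixed signs (some coordinates of $N_{1/2}\cup N_{1/2}^h$ are negative and get flipped when passing to dominant form), so the long $A_{2n-1}$ operator applied there produces $\mathrm{Ind}_{A_{2n-1}}^{D_{2n}}\big(J_{A_{2n-1}}(\theta,\phi)\big)$, which is \emph{not} $I=\mathrm{Ind}_{A_{2n-1}}^{D_{2n}}\big(J_{A_{2n-1}}(M_{1/2}\cup M_{1/2}^h,\ldots)\big)$. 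The paper fixes this by a \emph{three}-step factorization $\omega\circ\zeta\circ\kappa$, carried out separately on each $D_n$ factor via \eqref{eq-dd}: the preliminary map $\kappa$ converts the dominant form to one with all $\mu$-signs equal to $+$, so that $\mathrm{im}(\zeta)$ is exactly the desired induced module. One must then show both that $\omega$ is injective on $\mathrm{im}(\zeta)$ \emph{and} that $\kappa$ is surjective on the spin-relevant $K$-types. The latter is not automatic, and the paper's key device---absent from your proposal---is that $\kappa$ and $\omega^h$ (for the Hermitian-dual parameter) are adjoint, so surjectivity of $\kappa$ follows from injectivity of $\omega^h$. This is why the paper proves injectivity for \emph{both} $\omega$ and $\omega^h$, with genuinely different combinatorics in each case.
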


\begin{proof}
The proposition holds more generally for $b \geq a-1$, which we will prove below. Consider the decomposition of the long intertwining operator $(\theta, \phi) \longrightarrow (w_0\theta,w_0\phi)$:
\begin{equation} \label{eq-lema1}
\begin{aligned}
  &\left((2b-\frac{1}{2})^-, (2b-\frac{5}{2})^-, \dots, (2a-\frac{1}{2})^-, (2a-\frac{3}{2})^+, (2a-\frac{5}{2})^-,\dots,\frac{3}{2}^-,(\frac{\epsilon(b)}{2})^{\epsilon(b)}\right) \\
 \xrightarrow{\kappa}
 &\left((2a-\frac{3}{2})^+, \dots, \frac{5}{2}^+, \frac{1}{2}^+, \frac{-3}{2}^+, \dots, (\frac{1}{2}-2b)^+\right) \\
\xrightarrow{\zeta} 
&\left((\frac{1}{2}-2b)^+, \dots, \frac{-3}{2}^+, \frac{1}{2}^+, \frac{5}{2}^+, \dots, (2a-\frac{3}{2})^+\right) \\ 
\xrightarrow{\omega}  & \left((\frac{1}{2}-2b)^+, (\frac{5}{2}-2b)^+, \dots, (\frac{1}{2}-2a)^+, (\frac{3}{2}-2a)^-, (\frac{5}{2}-2a)^+,\dots,-\frac{3}{2}^+,(\frac{\epsilon(a+1)}{2})^{\epsilon(a+1)}\right)
\end{aligned}
\end{equation} 
where $\epsilon(k) := (-1)^k$, and that on the Hermitian dual $(\theta^h, \phi^h) \longrightarrow (w_0\theta^h, w_0\phi^h)$:
{\allowdisplaybreaks
\begin{equation} \label{eq-lema12}
\begin{aligned}
  &\left((2b-\frac{1}{2})^+, (2b-\frac{5}{2})^+, \dots, (2a-\frac{1}{2})^+, (2a-\frac{3}{2})^-, (2a-\frac{5}{2})^+,\dots,\frac{3}{2}^+,(\frac{\epsilon(a)}{2})^{\epsilon(a+1)}\right) \\
 \xrightarrow{\kappa^h}
 &\left((2b-\frac{1}{2})^+, \dots, \frac{3}{2}^+, \frac{-1}{2}^+, \frac{-5}{2}^+,\dots, (\frac{3}{2}-2a)^+\right) \\
\xrightarrow{\zeta^h} 
&\left((\frac{3}{2}-2a)^+, \dots, \frac{-5}{2}^+, \frac{-1}{2}^+, \frac{3}{2}^+, \dots, (2b-\frac{1}{2})^+\right) \\ 
\xrightarrow{\omega^h}  & \left((\frac{1}{2}-2b)^-, (\frac{5}{2}-2b)^-, \dots, (\frac{1}{2}-2a)^-, (\frac{3}{2}-2a)^+, (\frac{5}{2}-2a)^-,\dots,\frac{-3}{2}^-,(\frac{\epsilon(b+1)}{2})^{\epsilon(b)}\right) \end{aligned}\end{equation}}
Note that $\zeta: X\left((2a-\frac{3}{2})^+,\dots, (\frac{1}{2} - 2b)^+\right) \to X\left((\frac{1}{2}-2b)^+,\dots, (2a-\frac{3}{2} )^+\right)$, is the longest type $A_{n-1}$ intertwining operator, so its image is precisely $\mathrm{Ind}_{A_{n-1}}^{D_n}(J_{A_{n-1}}(M_{1/2},N_{1/2}))$. Similarly, the image of $\zeta^h$ 
is equal to $\mathrm{Ind}_{A_{n-1}}^{D_n}(J_{A_{n-1}}(M_{1/2}^h,N_{1/2}^h))$.
By \eqref{eq-dd} and induction in stages, one therefore only needs to show $\kappa$ and $\kappa^h$ are surjective, 
as well as $\omega$ and $\omega^h$ are injective on $\mathrm{im}(\zeta)$ and $\mathrm{im}(\zeta^h)$ respectively.
However, one can check that $\kappa$ and $\omega^h$ (resp. $\kappa^h$ and $\omega$) are Hermitian adjoints of each other. So it suffices to prove that
\begin{center}
    $\omega$ and $\omega^h$ are injective on $\mathrm{im}(\zeta)$ and $\mathrm{im}(\zeta^h)$ respectively.
\end{center}
For simplicity, we present the proof when $a$ and $b$ are both even. This fixes the signs of $\frac{\epsilon}{2}$ in \eqref{eq-lema1} and \eqref{eq-lema12}. The general case can be obtained from the proof below by replacing some $\mathfrak{gl}$-intertwining operators into the $\overline{\mathfrak{gl}}$ ones (or vice versa).

\medskip
\noindent {\bf Injectivity of $\omega$:}  By assumption $a\leq b+1$, one has $(\frac{1}{2}-2b)\leq (\frac{5}{2}-2a)$. Consider the intertwining operator $\omega_1$ from parameter $(M_{1/2},N_{1/2})$ to $\omega_1(M_{1/2},N_{1/2})$:
\[\begin{aligned} &\left((\frac{1}{2}-2b)^+, \dots, (\frac{1}{2}-2a)^+,(\frac{5}{2}-2a)^+, \dots, (2a-\frac{7}{2})^+, \underline{(2a-\frac{3}{2})^+}\right)\\
\xrightarrow{\omega_1} & \left((\frac{1}{2}-2b)^+, \dots, (\frac{1}{2}-2a)^+,\underline{-(2a-\frac{3}{2})^-}, (\frac{5}{2}-2a)^+, \dots, (2a-\frac{11}{2})^+, -(2a-\frac{7}{2})^-\right).\end{aligned}\]
Note that $\mathrm{im}(\zeta)$ lies inside $\mathrm{Ind}_{A_{b-a}\times \overline{A_{2a-2}}}^{D_n}(J_{A_{b-a}\times \overline{A_{2a-2}}}(M_{1/2},N_{1/2}))$, and $\omega_1$ can be seen as an $\overline{\mathfrak{gl}}(2a-2) \times \overline{\mathfrak{gl}}(1)$-intertwining operator on the sub-parameter $\left((\frac{5}{2}-2a)^+, \dots, (2a-\frac{7}{2})^+\right)\left(\underline{(2a-\frac{3}{2})^+}\right)$.
Now Proposition \ref{inj_v_x} implies that $\omega_1$ is injective over $\mathrm{im}(\zeta)$. 

Similarly, let $\omega_2, \dots, \omega_{a-1}$ be the intertwining operators given by:  
\[\begin{aligned}  
& \left((\frac{1}{2}-2b)^+, \dots, (\frac{1}{2}-2a)^+,(\frac{3}{2}-2a)^-, (\frac{5}{2}-2a)^+, \dots, (2a-\frac{11}{2})^+, (\frac{7}{2}-2a)^-\right) \\
\xrightarrow{\omega_2} & \left((\frac{1}{2}-2b)^+, \dots, (\frac{1}{2}-2a)^+,-(2a-\frac{3}{2})^-, (\frac{5}{2}-2a)^+, (\frac{7}{2}-2a)^-,(\frac{9}{2}-2a)^-,\dots, (2a-\frac{11}{2})^+\right) \\
\dots &  \\
\xrightarrow{\omega_{a-1}} & \left((\frac{1}{2}-2b)^+, (\frac{5}{2}-2b)^+, \dots, (\frac{1}{2}-2a)^+, (\frac{3}{2}-2a)^-, (\frac{5}{2}-2a)^+,\dots,-\frac{3}{2}^+,(\frac{-1}{2})^{-}\right).
\end{aligned}\]
For a similar reason to $\omega_1$, one has $\omega_i$ is injective over the image of $\omega_{i-1}\circ\dots\circ\omega_1\circ \zeta$. Notice that $\omega=\omega_{a-1}\circ\dots\circ \omega_1$, so $\omega$ is injective over $\mathrm{im}(\zeta)$.

\medskip
\noindent {\bf Injectivity of $\omega^h$}: Write $\omega^h$ as the composition of the intertwining operators $\gamma$ and $\delta$ as :
{\allowdisplaybreaks \begin{align*}
&\left((\frac{3}{2}-2a)^+, \dots, \frac{-5}{2}^+, \frac{-1}{2}^+, \frac{3}{2}^+, \dots, (2b-\frac{9}{2})^+, (2b-\frac{5}{2})^+,(2b-\frac{1}{2})^+\right) \\ 
\xrightarrow{\omega_1^h}
&\left((\frac{3}{2}-2a)^+, \dots, \frac{-5}{2}^+, \frac{-1}{2}^+\right) \left((\frac{1}{2}-2b)^-\right) \left( \frac{3}{2}^+, \dots, (2b-\frac{9}{2})^+,(\frac{5}{2}-2b)^- \right) \\ 
\xrightarrow{\omega_2^h}
&\left((\frac{3}{2}-2a)^+, \dots, \frac{-5}{2}^+, \frac{-1}{2}^+\right) \left((\frac{1}{2}-2b)^-,,(\frac{5}{2}-2b)^-\right) \left( \frac{3}{2}^+, \dots, (2b-\frac{9}{2})^+\right) \\
\dots &  \\
\xrightarrow{\omega_{b-1}^h} &\left((\frac{3}{2}-2a)^+, \dots, \frac{-5}{2}^+, \frac{-1}{2}^+\right) \left((\frac{1}{2}-2b)^-,\dots,\frac{-7}{2}^-,\frac{-3}{2}^{-} \right)
\\
\xrightarrow{\omega_b^h}  & \left((\frac{1}{2}-2b)^-, (\frac{5}{2}-2b)^-, \dots, (\frac{1}{2}-2a)^-, (\frac{3}{2}-2a)^+, (\frac{5}{2}-2a)^-,\dots,\frac{-3}{2}^-,(\frac{-1}{2})^{+}\right).\end{align*}}
For $1 \leq i \leq b-1$, the operators $\omega_i^h$ are similar to the $\omega_{i}$'s above. So $\omega_i^h$ are injective over $\mathrm{im}(\omega_{i-1}^h\circ\dots\circ\omega_1^h \circ\zeta^h)$.
Finally, by applying Proposition \ref{type_A_inj}, one can see $\omega_b^h$ is injective over $\mathrm{im}(\omega_{b-1}^h \circ \dots \circ \omega_1^h \circ\zeta^h)$. This finishes the proof of the statement.
\end{proof}

\begin{example} Consider $N_{1/2} = \left(\frac{5}{2}, \frac{1}{2}, \frac{-3}{2}, \frac{-7}{2}, \frac{-11}{2}, \frac{-15}{2}\right) \longleftrightarrow \begin{pmatrix} 2\\ 4 \end{pmatrix}$. Here, $\omega$ is given by:
\begin{align*}\omega: \left(\frac{-15}{2}^+, \frac{-11}{2}^+, \frac{-7}{2}^+, \frac{-3}{2}^+, \frac{1}{2}^+, \frac{5}{2}^+\right) \longrightarrow \left(\frac{-15}{2}^+, \frac{-11}{2}^+, \frac{-7}{2}^-, \frac{-5}{2}^-, \frac{-3}{2}^-, \frac{-1}{2}^-\right), \end{align*}
which is a $\overline{\mathfrak{gl}}(2) \times \overline{\mathfrak{gl}}(1)$ operator on the last three coordinates. By applying Proposition \ref{inj_v_x} and Remark \ref{rmk-inclusion} with $(\nu_1^{\delta},\nu_2^{\delta}) = (\frac{-3}{2}^+, \frac{1}{2}^+)$ and $(\xi_1^{\epsilon_1}) = (\frac{-5}{2}^-)$ (here we take $(\frac{-5}{2}^-)$ instead of $(\frac{5}{2}^+)$ since we work on $\overline{\mathfrak{gl}}(3)$ instead of $\mathfrak{gl}(3)$), one can conclude that $\omega$ is injective on the level of spin-relevant $K$-types.

Now we study $\omega^h:$
\begin{align*}\omega^h: \left(\frac{-5}{2}^+, \frac{-1}{2}^+, \frac{3}{2}^+, \frac{7}{2}^+, \frac{11}{2}^+, \frac{15}{2}^+\right) &\xrightarrow{\omega_1^h} \left(\frac{-5}{2}^+, \frac{-1}{2}^+, \frac{-15}{2}^-, \frac{-11}{2}^-, \frac{-7}{2}^-, \frac{-3}{2}^-\right) \\ &\xrightarrow{\omega_2^h} \left(\frac{-15}{2}^-, \frac{-11}{2}^-, \frac{-7}{2}^-, \frac{-5}{2}^+, \frac{-3}{2}^-, \frac{-1}{2}^+\right)
\end{align*}
More explicitly, $\omega_1^h$ consists of several $\overline{\mathfrak{gl}}(2)$-intertwining operators by changing the $\nu$-coordinates into negatives, and the usual $\mathfrak{gl}(2)$-intertwining operators swapping the negative $\nu$-coordinates to the left. All these operators have no kernel on the level of $\mathcal{V}_{\eta(i)}$'s, since the difference (resp. sum) of the $\nu$-coordinates in each  $\mathfrak{gl}(2)$-operator is not equal to $2$ or $3$. So $\omega_1^h$ is injective on the spin-relevant $K$-types.

As for $\omega_2^h$, one can directly apply Proposition \ref{type_A_inj} and Remark \ref{rmk-inclusion} with $(\xi_1^{\epsilon_1},\xi_2^{\epsilon_2}) = \left(\frac{-5}{2}^+, \frac{-1}{2}^+\right)$ and  $(\nu_1^{\delta},\nu_2^{\delta},\nu_3^{\delta},\nu_4^{\delta}) = \left(\frac{-15}{2}^-, \frac{-11}{2}^-, \frac{-7}{2}^-, \frac{-3}{2}^-\right)$
to conclude that it is also injective on the spin-relevant $K$-types.
\end{example}

\begin{example}
    It would be beneficial to see how the injectivity argument fails for some parameters not covered by the hypothesis in the proposition -- let $N_{1/2} = \left(\frac{13}{2}, \frac{9}{2}, \frac{5}{2}, \frac{1}{2}, \frac{-3}{2}, \frac{-7}{2}\right) \longleftrightarrow \begin{pmatrix} 4\\ 2 \end{pmatrix}$, and 
\begin{align*}\omega: \left(\frac{-7}{2}^+, \frac{-3}{2}^+, \frac{1}{2}^+, \frac{5}{2}^+, \frac{9}{2}^+, \frac{13}{2}^+\right) \longrightarrow \left(\frac{-13}{2}^-, \frac{-9}{2}^-, \frac{-7}{2}^+, \frac{-5}{2}^-, \frac{-3}{2}^+, \frac{-1}{2}^-\right).
\end{align*}
In particular, one has to pass $\frac{-13}{2}^-$ to the leftmost coordinate. However, since $\frac{-7}{2} - 3 = \frac{-13}{2}$, one cannot apply Proposition \ref{inj_v_x}. In order to apply Proposition \ref{type_A_inj} instead, one may consider $(\nu_1^{\delta}, \nu_2^{\delta}) = (\frac{-13}{2}^-,\frac{-9}{2}^-)$. However, 
$\frac{-9}{2} + 3 = \frac{-3}{2}$ which appears on the left of $\frac{9}{2}$, so the proposition does not apply either. Similarly, if one considers $(\nu_1^{\delta}, \nu_2^{\delta}, \nu_3^{\delta}) = (\frac{-13}{2}^-,\frac{-9}{2}^-,\frac{-5}{2}^-)$, then one has $\frac{-5}{2}+3=\frac{1}{2}$ which appears on the left of $\frac{5}{2}$. Finally, if one considers $(\nu_1^{\delta}, \nu_2^{\delta}, \nu_3^{\delta},\nu_4^{\delta}) = (\frac{-13}{2}^-,\frac{-9}{2}^-,\frac{-5}{2}^-,\frac{-1}{2}^-)$, the intertwining operator 
$$\left(\frac{1}{2}^+,\frac{5}{2}^+\right) \longrightarrow \left(\frac{-5}{2}^-,\frac{-1}{2}^-\right)$$
has kernel on the spin-relevant $K$-types. 

In conclusion, Proposition \ref{inj_v_x} and \ref{type_A_inj} does not guarantee injectivity of $\omega$ (and surjectivity of $\kappa^h$). In fact, one can apply \texttt{atlas} to check that the multiplicities of $J(M_{1/2},N_{1/2})$ and the induced module $\mathrm{Ind}_{A_5}^{D_6}(J_{A_5}(M_{1/2}, N_{1/2}))$ are different on the level of spin-relevant $K$-types (see Appendix A for an example of atlas calculations). In view of Corollary \ref{cor-u} below, such discrepancy is expected since $\pi$ is known to be a (unitary) Brega representation.
\end{example}

\begin{corollary} \label{cor-u}
Let $n = a+b$ and $N_{1/2} \longleftrightarrow \begin{pmatrix} a  \\ b  \end{pmatrix}$ with $a < b$. Then
$\pi_I := J(N_{1/2} \cup N_{1/2}^h, N_{1/2} \cup N_{1/2}^h)$
is not unitary at the $K$-type with highest weight $\eta(2a+1)$.
\end{corollary}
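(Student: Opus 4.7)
The plan combines Proposition \ref{single_mult} with Theorem \ref{thm-comp}(b). Since $b > a$, Proposition \ref{single_mult} identifies $\pi_I$ as the lowest $K$-type subquotient of
\[I := \mathrm{Ind}_{A_{2n-1}}^{D_{2n}}\Big(J_{A_{2n-1}}\big(M_{1/2} \cup M_{1/2}^h,\ N_{1/2} \cup N_{1/2}^h\big)\Big),\]
with the multiplicities of every $\mathcal{V}_{\eta(i)}$ in $\pi_I$ and in $I$ coinciding. Hence it suffices to produce an indefinite Hermitian form on $I$ at $\mathcal{V}_{\eta(2a+1)}$.

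The next step is to recognise the inducing module as a twisted $t$-complementary series. By \eqref{eq-complementary}, since $x_1 = a \leq y_1 = b$, the one-dimensional constituent of $J_{A_{n-1}}(M_{1/2},N_{1/2})$ is $\mathrm{comp}_{1/2}(a+b,\,b-a+\tfrac{1}{2})^h$, so $J_{A_{2n-1}}(M_{1/2}\cup M_{1/2}^h,\,N_{1/2}\cup N_{1/2}^h)$ is (a pseudo-spherical twist of) the $t$-complementary series of $GL(2n,\bb{C})$ with $t = b - a + \tfrac{1}{2}\in (b-a,\,b-a+1)$. Theorem \ref{thm-comp}(b), applied with $q = b-a$ and ambient GL size $a+b$, then yields an indefinite Hermitian pairing on the $\widetilde{U}(2n)$-type
\[\tau := \Big(\underbrace{\tfrac{3}{2},\ldots,\tfrac{3}{2}}_{2a+1},\tfrac{1}{2},\ldots,\tfrac{1}{2}, \underbrace{-\tfrac{1}{2},\ldots,-\tfrac{1}{2}}_{2a+1}\Big),\]
since $(a+b) - q + 1 = 2a+1$.

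Finally, one transfers this indefiniteness to $\mathcal{V}_{\eta(2a+1)}$ via the bottom-layer $K$-type argument of Section \ref{sec-bottom}. The weight $\tau$ is obtained from $\eta(2a+1)$ by flipping the sign of $2a$ of its $\tfrac{1}{2}$-coordinates — an even number of sign changes — so $\tau$ lies in the $W(D_{2n})$-orbit of $\eta(2a+1)$ and occurs as a $\widetilde{U}(2n)$-highest weight in $\mathcal{V}_{\eta(2a+1)}|_{\widetilde{U}(2n)}$. Thus $\mathcal{V}_{\eta(2a+1)}$ is bottom layer for the $A_{2n-1}$-induction, paralleling the role of $\mathcal{V}_{\eta(1)}$ in the discussion surrounding \eqref{eq-bv}, and the indefinite signature at $\mathcal{A}_\tau$ inside the inducing module transfers to $\mathcal{V}_{\eta(2a+1)}$ in $I$, hence in $\pi_I$. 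The main delicate point is the bottom-layer transfer for general $q \geq 1$ rather than only $q = 1$ (which is the case used explicitly in Section \ref{sec-main}); this follows by a direct imitation of the reasoning developed there.
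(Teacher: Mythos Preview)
Your argument is correct and matches the paper's proof essentially step for step: invoke Proposition~\ref{single_mult} to reduce to the induced module, identify the inducing module with the twisted $t$-complementary series for $t=b-a+\tfrac12$, apply Theorem~\ref{thm-comp}(b) with $q=b-a$ to locate the indefinite $\widetilde{U}(2n)$-type $\tau$, and then transfer the indefiniteness up to $\mathcal{V}_{\eta(2a+1)}$. The only cosmetic difference is in the last step: where you invoke a bottom-layer style argument (noting that $\tau$ is an extremal weight of $\mathcal{V}_{\eta(2a+1)}$ via an even number of sign changes), the paper simply cites \cite[Proposition~10.5]{V86} for the signature transfer under induction; these are two packagings of the same mechanism, and your caveat about the ``delicate point'' is exactly where that reference does the work.
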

\begin{proof}
By Proposition \ref{single_mult}, $\pi_I$ is a subquotient of
$\mathrm{Ind}_{A_{2n-1}}^{D_{2n}} \big(J_A(M_{1/2} \cup M_{1/2}^h, N_{1/2} \cup N_{1/2}^h)\big),$
and the multiplicities of the $K$-type $\mathcal{V}_{\eta(i)}$ are equal for both modules for all $0 \leq i \leq n$.
Note that Theorem \ref{thm-comp} implies that
$$J_A(M_{1/2} \cup M_{1/2}^h, N_{1/2} \cup N_{1/2}^h) =
\mathrm{Ind}_{A_{n-1} \times A_{n-1}}^{A_{2n-1}}\left(\mathrm{comp}_{1/2}(n,b-a+\frac{1}{2}) \otimes \mathrm{comp}_{1/2}(n,b-a+\frac{1}{2})^h\right)$$
is Hermitian, and the Hermitian form is indefinite on the $\widetilde{U}(2n)$-type with highest weight $(\frac{1}{2},\dots, \frac{1}{2}) +(\overbrace{1,\dots,1}^{2a+1},0,\dots,0,\overbrace{-1,\dots,-1}^{2a+1})$.
So the induced module above has an indefinite form on
$\mathcal{V}_{\eta(2a+1)}$ (cf. \cite[Proposition 10.5]{V86}), and the result follows.
\end{proof}

\subsection{Base Case II} \label{sec-v}
\begin{proposition} \label{two_string_mult}
Let $n = c+d+e+f$ and $N_{1/2}\longleftrightarrow \begin{pmatrix} c &  d \\ e & f\end{pmatrix}$ be such that $d> e+1$. Set 
$$A_{1/2} \longleftrightarrow \begin{pmatrix} d \\ e \end{pmatrix}, \quad \quad \quad D_{1/2} \longleftrightarrow \begin{pmatrix} c \\ f \end{pmatrix}.$$ 
Then $\pi=J_{D_{2n}}(M_{1/2} \cup M_{1/2}^h,N_{1/2}\cup N_{1/2}^h)$ is the lowest $K$-type subquotient of the induced representation
\[\mathrm{Ind}_{A_{2(d+e)-1}\times D_{2(c+f)}}^{D_{2n}} \big(J_A(A_{1/2}\cup A_{1/2}^h)\otimes J_D(D_{1/2} \cup D_{1/2}^h)\big)
\]
(we omit the $\mu$-parameters in the above expression, whose coordinates are all equal to $\frac{1}{2}$), both having the same multiplicity of $\mathcal{V}_{\eta(i)}$  for all $0\leq i\leq n$.
\end{proposition}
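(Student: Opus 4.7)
The plan is to adapt the proof of Proposition \ref{single_mult} from the single-string setting to the two-string setting. Writing $\theta=wM_{1/2}$, $\phi=wN_{1/2}$ with $\phi$ dominant, I would decompose the long intertwining operator on $X(\theta,\phi)$ as a three-step composition
\[\iota_{w_0}=\omega\circ\zeta\circ\kappa\colon X(\theta,\phi)\xrightarrow{\kappa}X(w_1\theta,w_1\phi)\xrightarrow{\zeta}X(w_2w_1\theta,w_2w_1\phi)\xrightarrow{\omega}X(w_0\theta,w_0\phi),\]
with $w_1,w_2\in W(D_{2n})$ chosen so that $\zeta$ is the longest element of the Levi $A_{2(d+e)-1}\times D_{2(c+f)}$ and $w_2w_1\phi$ is anti-dominant for this Levi. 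Concretely, after applying $\kappa$ the $2(d+e)$ coordinates forming $A_{1/2}\cup A_{1/2}^h$ should be grouped into a contiguous block on which $\zeta$ acts as a type-$A$ longest element, while the remaining $2(c+f)$ coordinates form a separate $D_{2(c+f)}$-block in a dominant order. With this choice, the image of $\zeta$ is precisely the claimed parabolically induced module, and therefore $\pi=\mathrm{im}(\iota_{w_0})$ is its lowest $K$-type subquotient.

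To match the multiplicity of each $\mathcal{V}_{\eta(i)}$, I would, as in Proposition \ref{single_mult}, reduce to proving that $\kappa$ is surjective and $\omega$ is injective on $\mathrm{im}(\zeta)$ over the spin-relevant isotypic spaces. Running the analogous three-step decomposition for the Hermitian-dual principal series $X(\theta^h,\phi^h)$, the pairs $(\kappa,\omega^h)$ and $(\omega,\kappa^h)$ turn out to be Hermitian adjoints, so the surjectivity of $\kappa$ follows from the injectivity of $\omega^h$ on $\mathrm{im}(\zeta^h)$, and it suffices to prove injectivity of $\omega$ on $\mathrm{im}(\zeta)$ and of $\omega^h$ on $\mathrm{im}(\zeta^h)$.

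To prove these injectivities I would factor $\omega$ and $\omega^h$ into chains of simpler type-$A$ sub-intertwiners $\omega_1,\omega_2,\ldots$ and $\omega_1^h,\omega_2^h,\ldots$, as in the proof of Proposition \ref{single_mult}. At each stage, by embedding $\mathrm{im}(\omega_{i-1}\circ\cdots\circ\omega_1\circ\zeta)$ into an appropriate parabolically induced module in the spirit of Remark \ref{rmk-inclusion}, one can apply Propositions \ref{inj_v_x} and \ref{type_A_inj} to deduce injectivity of $\omega_i$ on that image for every spin-relevant $K$-type. The gap hypothesis $d>e+1$ is precisely what guarantees the technical requirement in those propositions: it forces the coordinates of $A_{1/2}\cup A_{1/2}^h$ and those of $D_{1/2}\cup D_{1/2}^h$ to be separated far enough that no sub-intertwiner has $\xi$ and $\nu$ parameters differing by the forbidden values $\pm 2$ or $\pm 3$.

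The main obstacle I expect is the bookkeeping. Unlike the single-string case, there are now four interleaved chains of $\nu$-coordinates (the two strings of $N_{1/2}$ together with their Hermitian duals), and at each elementary step one must carefully track which coordinates are being transposed and verify that the gap hypothesis prevents the forbidden differences from appearing. Once the sequence of sub-intertwiners is constructed so that every elementary move falls into the scope of Propositions \ref{inj_v_x} or \ref{type_A_inj}, the argument mirrors that of Proposition \ref{single_mult} essentially verbatim.
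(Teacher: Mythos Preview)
Your overall architecture is correct and matches the paper: decompose the long intertwining operator as $\omega\circ\zeta\circ\kappa$ with $\zeta$ the long Levi element, use Hermitian duality to reduce to injectivity of $\omega|_{\mathrm{im}(\zeta)}$ and $\omega^h|_{\mathrm{im}(\zeta^h)}$, and factor these into chains of type-$A$ sub-intertwiners handled via Propositions~\ref{inj_v_x} and~\ref{type_A_inj}. However, there is a genuine gap in how you expect the argument to conclude.

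You write that the hypothesis $d>e+1$ is ``precisely what guarantees'' that Propositions~\ref{inj_v_x} and~\ref{type_A_inj} apply at every step. This is not so. The paper actually proves the proposition under the weaker hypothesis $d\ge e$, and the inequality $d>e+1$ plays no role in making the sub-intertwiners injective; it is only used afterward (in Corollary~\ref{cor-v}) to force the $A$-factor to be a non-unitary complementary series. After a careful reduction --- first eliminating $f$, then peeling off further coordinates via the two propositions --- there remain residual pieces of $\omega$ and $\omega^h$ for which the forbidden differences $\pm2,\pm3$ \emph{do} occur and the propositions genuinely fail to apply. Concretely, these are the $\omega$-operator for $N_{1/2}\longleftrightarrow\begin{pmatrix}2&2\\0&0\end{pmatrix}$ and the $\omega^h$-operator for $N_{1/2}\longleftrightarrow\begin{pmatrix}2&1\\1&0\end{pmatrix}$. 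The paper handles these base cases by a direct multiplicity computation (an \texttt{atlas} check, recorded in Appendix~A). Without this ingredient your argument is incomplete: the ``bookkeeping'' you anticipate is not merely tedious but runs into an honest obstruction requiring a separate, finite verification.

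A minor point: the paper carries out the $\kappa,\zeta,\omega$ decomposition on each $D_n$ half of the virtual induction $\mathrm{Ind}_{D_n\times D_n}^{D_{2n}}$ separately, so $\zeta$ is the long element of $A_{d+e-1}\times D_{c+f}$, not of $A_{2(d+e)-1}\times D_{2(c+f)}$ as you wrote.
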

\begin{proof} As in the previous case, the proposition holds more generally for $d \geq e$, which we will show below. Consider as in Proposition \ref{single_mult} the intertwining operators:
\begin{align*}
&(\theta,\phi)\\
\xrightarrow{\kappa} &\left((2d-\frac{3}{2})^+,\dots,(\frac{1}{2}-2e)^+\right)\left((2c-\frac{3}{2})^+,\dots, (2f+\frac{1}{2})^+,(2f-\frac{1}{2})^-,\dots,\frac{5}{2}^+,\frac{3}{2}^-,\frac{\epsilon(f)}{2}^{\epsilon(f)} \right)\\
\xrightarrow{\zeta} &\left((\frac{1}{2}-2e)^+,\dots,(2d-\frac{3}{2})^+\right)\left((\frac{3}{2}-2c)^-,\dots, (\frac{-1}{2}-2f)^-,(\frac{1}{2}-2f)^+,\dots,\frac{-5}{2}^-,\frac{-3}{2}^+,\frac{\epsilon(c+1)}{2}^{\epsilon(c+1)} \right)\\
\xrightarrow{\omega} &(w_0\theta,w_0\phi),
\end{align*}
where $\zeta$ is the long $A_{d+e-1} \times D_{c+f}$-intertwining operator, along with its Hermitian dual:
\begin{align*}
&(\theta^h,\phi^h)\\
\xrightarrow{\kappa^h} &\left((2e-\frac{1}{2})^+,\dots,(\frac{3}{2}-2d)^+\right) \left((2c-\frac{3}{2})^-,\dots, (2f+\frac{1}{2})^-,(2f-\frac{1}{2})^+,\dots,\frac{5}{2}^-,\frac{3}{2}^+,\frac{\epsilon(c)}{2}^{\epsilon(c+1)}\right)\\
\xrightarrow{\zeta^h} &\left((\frac{3}{2}-2d)^+,\dots,(2e-\frac{1}{2})^+\right) \left((\frac{3}{2}-2c)^+,\dots, (\frac{-1}{2}-2f)^-,(\frac{1}{2}-2f)^-,\dots,\frac{-5}{2}^+,\frac{-3}{2}^-,\frac{\epsilon(f+1)}{2}^{\epsilon(f)}\right)\\
\xrightarrow{\omega^h} &(w_0\theta^h,w_0\phi^h)
\end{align*}
One needs to show that $\ker(\omega|_{\mathrm{im}(\zeta)})=0$ 
and $\ker(\omega^h|_{\mathrm{im}(\zeta^h)})=0$ on the $\mathcal{V}_{\eta(i)}$-isotropic spaces. 

\bigskip
\noindent{\bf Step 1 - Reduction to $\begin{pmatrix} c & d \\ e & 0 \end{pmatrix}$:} As before, we assume $c, d, e, f$ are even for simplicity. Rather than studying $\omega$ and $\omega^h$ directly, consider the intertwining operators
\[\begin{aligned}
&\left((2d-\frac{3}{2})^+,\dots,(\frac{1}{2}-2e)^+\right) \left((2c-\frac{3}{2})^+,\dots, (2f+\frac{1}{2})^+,(2f-\frac{1}{2})^-,\dots,\frac{5}{2}^+,\frac{3}{2}^-,\frac{1}{2}^{+}\right)\\
 \xrightarrow{\Sigma}&
\left((\frac{1}{2}-2e)^+,\dots,(2d-\frac{3}{2})^+\right) \left((\frac{1}{2}-2f)^+,\dots,\frac{-3}{2}^+\right) \left((\frac{3}{2}-2c)^-,\dots,\frac{-1}{2}^{-} \right)\\
\xrightarrow{\Delta} &
(w_0\theta,w_0\phi),\end{aligned}\]
and 
\[\begin{aligned}
&\left((2e-\frac{1}{2})^+,\dots,(\frac{3}{2}-2d)^+\right) \left((2c-\frac{3}{2})^-,\dots, (2f+\frac{1}{2})^-,(2f-\frac{1}{2})^+,\dots,\frac{5}{2}^-,\frac{3}{2}^+,\frac{1}{2}^{-}\right)\\
 \xrightarrow{\Sigma^h}&
\left((\frac{3}{2}-2d)^+,\dots,(2e-\frac{1}{2})^+\right) \left((\frac{1}{2}-2f)^-,\dots,\frac{-3}{2}^-\right) \left((\frac{3}{2}-2c)^+,\dots,\frac{-1}{2}^{+} \right)\\
\xrightarrow{\Delta^h} &
(w_0\theta^h,w_0\phi^h),\end{aligned}\]
Then $\Delta \circ \Sigma = \omega \circ \zeta$, and
$\Sigma$ is part of the intertwining operator 
$\zeta$ (and similarly for $\Delta^h$, $\Sigma^h$, $\omega^h$, $\zeta^h$). Hence it suffices to show that $\ker(\Delta|_{\mathrm{im}(\Sigma)})=0$ and $\ker(\Delta^h|_{\mathrm{im}(\Sigma^h)})=0$.

Write $\Delta = \iota_3 \circ \iota_2 \circ \iota_1$ and $\Delta^h = (\iota_3^h)' \circ (\iota_2^h)' \circ (\iota_1^h)'$, where:
\begin{align*}
&\left((\frac{1}{2}-2e)^+,\dots,(2d-\frac{3}{2})^+\right) \left((\frac{1}{2}-2f)^+,\dots,\frac{-3}{2}^+\right) \left((\frac{3}{2}-2c)^-,\dots,\frac{-1}{2}^{-}\right) \\
\xrightarrow{\iota_1} &
\left((\frac{1}{2}-2f)^+,\dots,\frac{-3}{2}^+\right) \left((\frac{1}{2}-2e)^+,\dots,(2d-\frac{3}{2})^+\right) \left((\frac{3}{2}-2c)^-,\dots,\frac{-1}{2}^{-}\right) \\
\xrightarrow{\iota_2} & \left((\frac{1}{2}-2f)^+,\dots,\frac{-3}{2}^+\right)\left((\frac{1}{2}-2e)^+,\dots,\frac{-3}{2}^+\right) \left((\frac{3}{2}-2c)^-,\dots,\frac{-1}{2}^{-}\right) \left((\frac{3}{2}-2d)^-,\dots,\frac{-1}{2}^{-} \right) \\
\xrightarrow{\iota_3} &
\ (w_0\theta,w_0\phi).
\end{align*}
and 
\begin{align*}
&\left((\frac{3}{2}-2d)^+,\dots,(2e-\frac{1}{2})^+\right) \left((\frac{1}{2}-2f)^-,\dots,\frac{-3}{2}^-\right) \left((\frac{3}{2}-2c)^+,\dots,\frac{-1}{2}^{+} \right) \\
\xrightarrow{\iota_1^h} &
\left((\frac{1}{2}-2f)^-,\dots,\frac{-3}{2}^-\right) \left((\frac{3}{2}-2d)^+,\dots,(2e-\frac{1}{2})^+\right) \left((\frac{3}{2}-2c)^+,\dots,\frac{-1}{2}^{+} \right) \\
\xrightarrow{\iota_2^h} &
\left((\frac{1}{2}-2f)^-,\dots,\frac{-3}{2}^-\right)\left((\frac{3}{2}-2d)^+,\dots,\frac{-1}{2}^{+}\right) \left((\frac{3}{2}-2c)^+,\dots,\frac{-1}{2}^{+}\right) \left((\frac{1}{2}-2e)^{-},\dots,\frac{-3}{2}^{-}\right)  \\
\xrightarrow{\iota_3^h} &
\ (w_0\theta^h,w_0\phi^h),
\end{align*} 
We first study $\iota_1$ in full detail, and the $\iota_1^h$ case is similar: Note that $\iota_1$ may conjugate the $\nu$-parameters into a more dominant form, so it may not be well-defined. However, by Proposition \ref{inj_v_x}, $\iota_1$ is well-defined and injective over $\mathrm{im}(\Sigma)$. Indeed, $\iota_1$ is given by:
\small
\[\left((\frac{1}{2}-2e)^+,\dots,(2d-\frac{3}{2})^+\right) \left((\frac{1}{2}-2f)^+,\dots,\frac{-3}{2}^+\right) \to
\left((\frac{1}{2}-2f)^+,\dots,\frac{-3}{2}^+\right) \left((\frac{1}{2}-2e)^+,\dots,(2d-\frac{3}{2})^+\right),\]
\normalsize so for all $x \in \{\frac{1}{2}-2f,\dots,\frac{-3}{2}\}$,
$x \neq (\frac{1}{2}-2e)-2$ or $(2d-\frac{3}{2}) + 2$ since $e \geq f$, and the hypotheses of Proposition \ref{inj_v_x} is satisfied. Meanwhile, Proposition \ref{non_red_of_phi} implies that
$$\Delta=\iota_3 \circ \iota_2 \circ\iota_1$$
over the $\mathcal{V}_{\eta(i)}$-isotropic subspaces of $\mathrm{im}(\Sigma)$. 
Now we focus on $\iota_3$, and similarly on $\iota_3^h$. Indeed, $\iota_3$ can be seen as the composition of $3$ type A intertwining operators given in Proposition \ref{type_A_inj}. In each case, since $\frac{-3}{2} + 2$ or $\frac{-3}{2} + 3$ (or $\frac{-1}{2} + 2$ or $\frac{-1}{2} + 3$) are positive, it cannot be equal to any of the negative coordinates to its left. Therefore, $\iota_3$ is injective over $\mathrm{im}(\iota_2\circ\iota_1\circ\Sigma)$.  
So we are left to show $\iota_2$ is injective over $\mathrm{im}(\iota_1 \circ\Sigma)$, that is, one can omit the $\left((\frac{1}{2}-2f)^+,\dots,\frac{-3}{2}^+\right)$-component in the above intertwining operators. In other words, we assume $f = 0$ and study the injectivity of $\Delta = \omega = \iota_3 \circ \iota_2$ over $\mathrm{im}(\Sigma) = \mathrm{im}(\zeta)$.

\bigskip
\noindent{\bf Step 2 - Reduction to $\begin{pmatrix} 2 & 2 \\ 0 & 0 \end{pmatrix}$ for $\omega$:}
We first focus on $\omega$, and further reduce the study of injectivity of $\omega$ from the case of $\begin{pmatrix} c & d \\ e & 0 \end{pmatrix}$ to that of $\begin{pmatrix} 2 & 2 \\ 0 & 0 \end{pmatrix}$. 
For simplicity, we assume $c$ and $d$ are even so that one always has $\frac{\epsilon}{2}^{\epsilon} = \frac{-1}{2}^-$ in the intertwining operators.

\medskip
Consider $\omega$ as a composition of the following operators:
{\allowdisplaybreaks \begin{align*}
&\left((\frac{1}{2}-2e)^+,\dots,\frac{1}{2}^+, \frac{5}{2}^+, \frac{9}{2}^+, \dots, (2d-\frac{3}{2})^+\right) \left((\frac{3}{2}-2c)^-,\dots,\frac{-5}{2}^-,\frac{-1}{2}^{-}\right) \\
\xrightarrow{\omega_1} & \left((\frac{1}{2}-2e)^+,\dots,\frac{1}{2}^+, \frac{5}{2}^+\right) \left((\frac{3}{2}-2c)^-,\dots,\frac{-5}{2}^-,\frac{-1}{2}^{-}\right)\left(\frac{9}{2}^+, \dots, (2d-\frac{3}{2})^+\right)  \\
\xrightarrow{\omega_2} & \left((\frac{1}{2}-2e)^+,\dots,\frac{1}{2}^+, \frac{5}{2}^+\right) \left((\frac{3}{2}-2d)^-, \dots, \frac{-9}{2}^- \right) \left((\frac{3}{2}-2c)^-,\dots,\frac{-5}{2}^-,\frac{-1}{2}^{-}\right)  \\
\xrightarrow{\omega_3} & \left((\frac{1}{2}-2e)^+,\dots,\frac{-3}{2}^+\right) \left((\frac{3}{2}-2d)^-, \dots, \frac{-9}{2}^- \right) \left(\frac{1}{2}^+, \frac{5}{2}^+\right)  \left((\frac{3}{2}-2c)^-,\dots,\frac{-5}{2}^-,\frac{-1}{2}^{-}\right)  \\
\xrightarrow{\omega_4} & \left((\frac{1}{2}-2e)^+,\dots,\frac{-3}{2}^+\right) \left((\frac{3}{2}-2d)^-, \dots, \frac{-9}{2}^- \right) \left((\frac{3}{2}-2c)^-,\dots,\frac{-9}{2}^-\right) \left(\frac{1}{2}^+, \frac{5}{2}^+\right)  \left(\frac{-5}{2}^-,\frac{-1}{2}^{-}\right)  \\
\xrightarrow{\omega_5} & \left((\frac{1}{2}-2e)^+,\dots,\frac{-3}{2}^+\right) \left((\frac{3}{2}-2d)^-, \dots, \frac{-9}{2}^- \right) \left((\frac{3}{2}-2c)^-,\dots,\frac{-9}{2}^-,\frac{-5}{2}^-,\frac{-1}{2}^{-} \right) \left(\underline{\frac{-5}{2}^-, \frac{-1}{2}^-}\right)   \\
\xrightarrow{\omega_6} & \left((\frac{1}{2}-2e)^+,\dots,\frac{-3}{2}^+\right) \left((\frac{3}{2}-2d)^-, \dots, \frac{-9}{2}^-, \underline{\frac{-5}{2}^-, \frac{-1}{2}^-} \right) \left((\frac{3}{2}-2c)^-,\dots,\frac{-9}{2}^-,\frac{-5}{2}^-,\frac{-1}{2}^{-} \right)   \\
\xrightarrow{\omega_7} & \ (w_0\theta,w_0\phi).
\end{align*}}
Let's show the injectivity of $\omega_i$ over the image of $\zeta$ for $i\neq 5$: For $\omega_1$, it is a type A intertwining operators given in Proposition \ref{type_A_inj}, and is injective since for all $x \in \{\frac{9}{2},\dots,2d-\frac{3}{2}\}$,
$x \neq \frac{-1}{2}+3$. By similar reasoning, $\omega_3$, $\omega_4$, $\omega_7$ are all injective over the image of $\zeta$. For $\omega_2$, one can write it as the composition of the intertwining operator $\left(\frac{9}{2}^+, \dots, (2d-\frac{3}{2})^+\right)$ to $\left((\frac{3}{2}-2d)^-,\dots, \frac{-9}{2}^-\right)$, which is injective since its decomposition of intertwining operators corresponding to the simple roots are injective by \eqref{phi_+-},  and a type A intertwining operators given in Proposition \ref{type_A_inj}, which also can be checked injective by $c\geq d$. For $\omega_6$, which moves the underlying parameter $(\underline{\frac{-5}{2}^-,\frac{-1}{2}^{-}})$ from right to left, it is injective over the image of $\zeta$ due to Proposition \ref{type_A_inj}, which is similar to the $\iota_3$. Hence, it only remains to study injectivity of $\omega_5$, which is part of the $\omega$-operator for $\begin{pmatrix} 2 & 2 \\ 0 & 0 \end{pmatrix}$.

\bigskip
\noindent{\bf Step 3 - Reduction to $\begin{pmatrix} 2 & 1 \\ 1 & 0 \end{pmatrix}$ for $\omega^h$:}
As in Step 2, we will reduce the study of injectivity of $\omega^h$ from the case of $\begin{pmatrix} c & d \\ e & 0 \end{pmatrix}$ to that of $\begin{pmatrix} 2 & 1 \\ 1 & 0 \end{pmatrix}$. 

Assume $e$ is even. Write $\omega^h$ as
\small
{\allowdisplaybreaks
\begin{align*}
& \left((\frac{3}{2}-2d)^+,\dots,\frac{-1}{2}^+,\frac{3}{2}^+,\frac{7}{2}^+,\dots,(2e-\frac{1}{2})^+\right) \left((\frac{3}{2}-2c)^+,\dots,\frac{-1}{2}^{+} \right) 
\\ \xrightarrow{\omega_1^h} & \left((\frac{3}{2}-2d)^+,\dots,\frac{-1}{2}^+,\frac{3}{2}^+\right) \left((\frac{3}{2}-2c)^+,\dots,\frac{-1}{2}^{+} \right) \left(\frac{7}{2}^+,\dots,(2e-\frac{1}{2})^+\right)
\\ \xrightarrow{\omega_2^h} & \left((\frac{3}{2}-2d)^+,\dots,\frac{-5}{2}^+,\frac{-1}{2}^+,\frac{3}{2}^+\right) \left((\frac{1}{2}-2e)^-,\dots,\frac{-7}{2}^-\right) \left((\frac{3}{2}-2c)^+,\dots,\frac{-1}{2}^{+} \right) 
\\ \xrightarrow{\omega_3^h} &
\left((\frac{3}{2}-2d)^+,\dots,(\frac{-1}{2}-2e)^+,(\frac{1}{2}-2e)^-,\dots,\frac{-7}{2}^-,\frac{-5}{2}^{+},\frac{-1}{2}^{+},\frac{3}{2}^+\right) \left((\frac{3}{2}-2c)^+,\dots,\frac{-5}{2}^+,\frac{-1}{2}^{+} \right)
\\ \xrightarrow{\omega_4^h} &
\left((\frac{3}{2}-2d)^+,\dots,(\frac{-1}{2}-2e)^+,(\frac{1}{2}-2e)^-,\dots,\frac{-7}{2}^-,\frac{-5}{2}^{+}\right) \left((\frac{3}{2}-2c)^+,\dots,\frac{-9}{2}^+\right) \left(\frac{-1}{2}^{+},\frac{3}{2}^+\right) \left(\frac{-5}{2}^+,\frac{-1}{2}^{+} \right)
\\ \xrightarrow{\omega_5^h} &
\left((\frac{3}{2}-2d)^+,\dots,(\frac{-1}{2}-2e)^+,(\frac{1}{2}-2e)^-,\dots,\frac{-7}{2}^-,\frac{-5}{2}^{+}\right) \left((\frac{3}{2}-2c)^+,\dots,\frac{-9}{2}^+,\frac{-5}{2}^+,\frac{-1}{2}^{+}\right) \left(\frac{-3}{2}^{-},\frac{1}{2}^-\right) 
\\
\xrightarrow{\omega_6^h} &
\ (w_0\theta^h,w_0\phi^h),
\end{align*}}
For $\omega_1^h$, $\omega_4^h$ and $\omega_6^h$, their injectivities are due to Proposition \ref{type_A_inj} similarly as above. For $\omega_2^h$, it is similar to $\omega_2$ above. For $\omega_3^h$, regard $\omega_3^h$ as the intertwining operator of another version of Proposition \ref{type_A_inj} about ``$\nu=(\nu_1<\dots<\nu_p;\xi_1\leq \dots\leq \xi_q)$". Since $d \geq e$, for all $x\in \{\frac{1}{2}-2e,\dots,-\frac{7}{2}\}$, $x\neq \frac{3}{2}-2d-3$. Hence, $\omega_3^h$ is injective. So it remains to show the injectivity of $\omega_5^h$, which is part of the $\omega^h$ operator for $\begin{pmatrix} 2 & 1 \\ 1 & 0 \end{pmatrix}$. 



\bigskip
\noindent{\bf Step 4: Proof of $\begin{pmatrix} 2 & 2 \\ 0 & 0 \end{pmatrix}$ and $\begin{pmatrix} 2 & 1 \\ 1 & 0 \end{pmatrix}$.} 
As discussed above, it suffices to show that $\omega$ (resp. $\omega^h$) is injective for $N_{1/2} \longleftrightarrow \begin{pmatrix} 2 & 2 \\ 0 & 0 \end{pmatrix}$ (resp. $\begin{pmatrix} 2 & 1 \\ 1 & 0 \end{pmatrix}$). Equivalently, one needs to check the proposition holds for these $N_{1/2}$-parameters, whose proofs are postponed to Appendix A.
\end{proof} 

\begin{corollary} \label{cor-v} 
Let $n = c+d+e+f$ and $N_{1/2} \longleftrightarrow \begin{pmatrix} c & d  \\ e & f  \end{pmatrix}$ with $d > e+1$. Then
$\pi_{II} := J(M_{1/2} \cup M_{1/2}^h, N_{1/2} \cup N_{1/2}^h)$
is not unitary at the $K$-type with highest weight $\eta(2e+2)$.
\end{corollary}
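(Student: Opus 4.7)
The plan is to mirror the argument used in Corollary \ref{cor-u}, replacing Proposition \ref{single_mult} with Proposition \ref{two_string_mult} and applying Theorem \ref{thm-comp}(b) to the relevant type A piece.

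First, by Proposition \ref{two_string_mult}, $\pi_{II}$ is the lowest $K$-type subquotient of
\[\mathrm{Ind}_{A_{2(d+e)-1}\times D_{2(c+f)}}^{D_{2n}} \big(J_A(A_{1/2}\cup A_{1/2}^h)\otimes J_D(D_{1/2} \cup D_{1/2}^h)\big),\]
and the multiplicities of $\mathcal{V}_{\eta(i)}$ agree for both modules. Combined with the bottom-layer argument of Section \ref{sec-bottom}, the signatures of the Hermitian form on $\mathcal{V}_{\eta(i)}$ agree as well, provided $\eta(i)$ is bottom layer for this induction.

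Next, I would identify the type A factor explicitly. Since $A_{1/2} \longleftrightarrow \begin{pmatrix} d \\ e \end{pmatrix}$ with $d > e + 1$, the formula \eqref{eq-complementary} shows that the corresponding one-dimensional module is $\mathrm{comp}_{1/2}(d+e, d-e-\tfrac{1}{2})$. Hence by Theorem \ref{thm-comp},
\[J_A(A_{1/2}\cup A_{1/2}^h) \;\cong\; \mathrm{Ind}_{A_{d+e-1}\times A_{d+e-1}}^{A_{2(d+e)-1}}\!\left(\mathrm{comp}_{1/2}(d+e,\tfrac{2(d-e)-1}{2}) \otimes \mathrm{comp}_{1/2}(d+e,\tfrac{2(d-e)-1}{2})^h\right),\]
which is the $t$-complementary series with parameter $t = d-e-\tfrac{1}{2}$.

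The main computation is then to apply Theorem \ref{thm-comp}(b). Since $d > e+1$, one has $d-e-1 < t < d-e$, so taking $q = d-e-1$ (which satisfies $1 \le q \le d+e$), the $t$-complementary series has indefinite Hermitian form at the trivial $K$-type and at the $\widetilde{U}(2(d+e))$-type with highest weight
\[(\tfrac{1}{2},\dots,\tfrac{1}{2}) + (\underbrace{1,\dots,1}_{2e+2},0,\dots,0,\underbrace{-1,\dots,-1}_{2e+2}).\]
Tensoring with the lowest $K$-type of the $D_{2(c+f)}$-factor (which contributes a positive-definite factor) and using the bottom layer argument precisely as in Corollary \ref{cor-u}, this $K\cap L$-type corresponds, after $D_n$-Weyl conjugation, to the spin-relevant $K$-type $\mathcal{V}_{\eta(2e+2)}$ of $G$. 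Therefore the Hermitian form on $\pi_{II}$ is indefinite at $\mathcal{V}_{\eta(2e+2)}$, and $\pi_{II}$ is not unitary.

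There is essentially no obstacle beyond bookkeeping: the combinatorial identification of the bottom-layer $K$-type with $\eta(2e+2)$ (checking that $2e+2$ is even, so the $D_n$-Weyl group can flip all $2e+2$ signs to produce a dominant weight ending in $+\tfrac{1}{2}$, matching the parity convention in Definition \ref{def-relevant}) is the only place where one needs to be careful.
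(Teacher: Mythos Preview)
Your argument is essentially the paper's, with one omission worth flagging. You write that the $D_{2(c+f)}$-factor ``contributes a positive-definite factor'' via its lowest $K$-type, but what is actually needed is that $J_D(D_{1/2}\cup D_{1/2}^h)$ is \emph{unitary}, so that tensoring with it preserves the indefiniteness of the type~$A$ factor on the whole $\mathcal{V}_{\eta(2e+2)}$-isotypic space (cf.\ \cite[Proposition~10.5]{V86}). The paper makes this explicit: since $D_{1/2}\longleftrightarrow\begin{pmatrix}c\\f\end{pmatrix}$ and the ordering gives $c\geq d>e+1\geq f+1>f$, this is a Brega representation and hence unitary by Section~\ref{sec-unitary}. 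With that one line added, your proof matches the paper's exactly.
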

\begin{proof}
By Proposition \ref{two_string_mult}, $\pi_{II}$ is a subquotient of $\mathrm{Ind}_{A_{2(d+e)-1}\times D_{2(c+f)}}^{D_{2n}} \big(J_A(A_{1/2}\cup A_{1/2}^h)\otimes J_D(D_{1/2} \cup D_{1/2}^h)\big)$
and the multiplicities of the $K$-type $\mathcal{V}_{\eta(2e+2)}$ are equal for both modules.
On the other hand, Theorem \ref{thm-comp} implies that
$$J_A(A_{1/2}\cup A_{1/2}^h) =
\mathrm{Ind}_{A_{d+e-1} \times A_{d+e-1}}^{A_{2(d+e)-1}}\left(\mathrm{comp}_{1/2}(d+e, d - e - \frac{1}{2} ) \otimes \mathrm{comp}_{1/2}(d+e, d-e - \frac{1}{2} )^h\right)$$
has an indefinite form on $(\frac{1}{2},\dots,\frac{1}{2})+(\overbrace{1,\dots,1}^{2e+2},0,\dots,0,\overbrace{-1,\dots,-1}^{2e+2})$,
and the module $J_D(D_{1/2} \cup D_{1/2}^h)$ is (unitary) Brega representation by Section \ref{sec-unitary}, since $c \geq d > e+1 \geq f+1 > f$. So the induced module above has an indefinite form on $\mathcal{V}_{\eta(2e+2)}$, and the result follows.
\end{proof}

\subsection{Induction by complementary series} \label{sec-comp}
In this section, we generalize the results in Section \ref{sec-u} and Section \ref{sec-v} by inducing
some Stein complementary series to the modules in Corollary \ref{cor-u} and Corollary \ref{cor-v}. We will
show that their non-unitarity is preserved upon induction by complementary series (Corollary \ref{cor-nonunit2}).
\begin{lemma}\label{case_first_is_comp}
Let $n = (s+t)+ \overline{n}$, where $\overline{n} := \sum_{i=1}^p (x_i+y_i)$ and $s-t = 0$ or $1$. Consider the parameters  
$N_{1/2}\longleftrightarrow \left(\begin{array}{cccc} s & x_1 & \dots & x_p \\ t & y_1 & \dots & y_p\end{array}\right)$, $\overline{N}_{1/2} \longleftrightarrow\left(\begin{array}{ccc}  x_1 & \dots & x_p \\ y_1 & \dots & y_p\end{array}\right)$ and their corresponding representations: 
$$\pi=J(M_{1/2}\cup M_{1/2}^h,N_{1/2}\cup N_{1/2}^h),\quad \quad \overline{\pi}=J(\overline{M}_{1/2}\cup \overline{M}_{1/2}^h,\overline{N}_{1/2}\cup \overline{N}_{1/2}^h).$$
Then $\pi$ and
\[\mathrm{Ind}^{D_{2n}}_{A_{2(s+t)-1}\times D_{2\overline{n}}}\Big(\mathrm{Ind}_{A_{s+t-1} \times A_{s+t-1}}^{A_{2(s+t)-1}}\left(\mathrm{comp}_{1/2}(s+t,\frac{1}{2})\otimes \mathrm{comp}_{1/2}(s+t,\frac{1}{2})^h\right)\otimes\overline{\pi}'\Big)\]
have the same multiplicity of the $K$-types of highest weights $\eta(i)$ for all $0 \leq i \leq n$.
\end{lemma}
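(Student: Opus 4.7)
The strategy follows the template of Propositions~\ref{single_mult} and~\ref{two_string_mult}, with the roles essentially swapped: it is now the \emph{first} column, of boundary Stein type since $s-t\in\{0,1\}$, that will be factored out through a type~$A$ long arrow, while $\overline{\pi}$ plays the role of the $D$-factor. Concretely, realize $\pi$ as the image of the long intertwining operator $\iota_{w_0}\colon X(\theta,\phi)\to X(w_0\theta,w_0\phi)$, and factor it as $\omega\circ\zeta\circ\kappa$. Here $\kappa$ brings the $2(s+t)$ coordinates coming from the first column and its Hermitian dual into a contiguous block, $\zeta$ is the long $A_{2(s+t)-1}$-intertwining operator acting on that block (whose image there is precisely $\mathrm{Ind}_{A_{s+t-1}\times A_{s+t-1}}^{A_{2(s+t)-1}}\!\bigl(\mathrm{comp}_{1/2}(s+t,\tfrac12)\otimes\mathrm{comp}_{1/2}(s+t,\tfrac12)^h\bigr)$), and $\omega$ finishes the journey to $(w_0\theta,w_0\phi)$; the analogous decomposition $\omega^h\circ\zeta^h\circ\kappa^h$ is set up for the Hermitian-dual parameters. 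By induction in stages, the image of $\zeta$ together with the parallel construction for the remaining $2\overline n$ coordinates coincides with the induced module in the statement, so the lemma reduces to proving that $\kappa,\kappa^h$ are surjective and that $\omega|_{\mathrm{im}(\zeta)},\omega^h|_{\mathrm{im}(\zeta^h)}$ are injective on every $\mathcal V_{\eta(i)}$-isotypic subspace.

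As in the proofs of Propositions~\ref{single_mult} and~\ref{two_string_mult}, $\kappa$ and $\omega^h$ (resp.\ $\kappa^h$ and $\omega$) are Hermitian adjoints of each other, so the surjectivity claims follow from the injectivity claims, and it suffices to establish the two injectivities. For this I decompose $\omega$, and similarly $\omega^h$, into a chain of simple reflections that shuffles the first-column block past each of the remaining sub-blocks produced by $\overline{N}_{1/2}\cup\overline{N}_{1/2}^h$. Each atomic step is either a type-$A$ swap handled by Propositions~\ref{inj_v_x} and~\ref{type_A_inj} in the framework of Remark~\ref{rmk-inclusion}, or a terminal $D$-type reflection analogous to the final $\omega_b^h$ step in the proof of Proposition~\ref{single_mult}. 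The boundary Stein condition $s-t\in\{0,1\}$ is precisely what ensures that the coordinates of the first column, which form a single $2$-arithmetic progression of length $s+t$ centered near $\tfrac12$, never produce a forbidden difference of $\pm 2$ or $\pm 3$ with any coordinate from a remaining column; Proposition~\ref{non_red_of_phi} is invoked whenever the natural factorization is non-reduced.

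The main obstacle will be the bookkeeping of potential parameter collisions. Since $\overline{\pi}$ is not assumed Hermitian, the pairs $(x_j,y_j)$ are otherwise unconstrained, and each such pair contributes a ``Hermitian-dual'' mirror block that must also be accounted for during the shuffling. My expectation, as in Steps~2--3 of the proof of Proposition~\ref{two_string_mult}, is that the analysis splits naturally into local injectivity checks one sub-block at a time, and within each sub-block only the extremal coordinates can collide with the moving block; those extremal coordinates differ from the endpoints of the first column by an even integer whose magnitude, under $s-t\in\{0,1\}$, can never equal exactly $2$ or $3$ in a harmful orientation. Assembling these local injectivities via Proposition~\ref{non_red_of_phi} then yields the claimed equality of $\mathcal V_{\eta(i)}$-multiplicities.
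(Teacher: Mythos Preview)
Your overall architecture---factor the long intertwining operator as $\omega\circ\zeta\circ\kappa$, identify $\mathrm{im}(\zeta)$ with the induced module in the statement, and reduce to showing injectivity of $\omega|_{\mathrm{im}(\zeta)}$ and $\omega^h|_{\mathrm{im}(\zeta^h)}$ on the spin-relevant isotypics via Propositions~\ref{inj_v_x}, \ref{type_A_inj} and~\ref{non_red_of_phi}---matches the paper's. (The paper works on each $D_n$ factor separately through \eqref{eq-dd} rather than on a $2(s+t)$-block in $D_{2n}$, but that is only a bookkeeping choice.)

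The gap is in your justification of the collision checks. You write that the pairs $(x_j,y_j)$ are ``otherwise unconstrained'' and that the Stein condition $s-t\in\{0,1\}$ is ``precisely what ensures'' no forbidden difference of $\pm2$ or $\pm3$ occurs between the first-column block and the remaining coordinates. Neither is correct. By the standing convention of~\eqref{eq-strings} the two rows of the array are descending chains, so in particular $s\geq x_i$ and $t\geq y_i$ for every $i$; combined with $s\geq t$ this gives $s\geq x_i,y_i$ for all $i$. It is \emph{this} size constraint---not $s-t\in\{0,1\}$---that forces every $\gamma_j$ coming from $\overline{N}_{1/2}$ to satisfy $|\gamma_j|\leq 2s-\tfrac12$, so that the forbidden values $(\tfrac12-2s)-2$, $(\tfrac12-2s)-3$, $(2s-\tfrac32)+2$, $(2s-\tfrac32)+3$ are all strictly outside the range of the $\gamma_j$'s and Proposition~\ref{inj_v_x} applies to the swap moving the first-column block past the $\gamma$-block. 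If one column had $x_j>s$ (which your ``unconstrained'' remark allows), it would contribute a coordinate $2x_j-\tfrac32>2s-\tfrac32$ and the hypothesis of Proposition~\ref{inj_v_x} could fail outright. The condition $s-t\in\{0,1\}$ does matter, but only to make the first-column block a single $2$-arithmetic progression so that the internal step (the paper's $\omega_2$, flipping the positive half of that block to its negatives) goes through cleanly; it plays no role in avoiding collisions with the $\overline{N}_{1/2}$-coordinates. Once you invoke $s\geq x_i,y_i$ at that point, the remainder of your plan is the paper's proof.
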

\begin{proof}
Let $(\overline{\theta},\overline{\phi})$ be the dominant form of $(\overline{M}_{1/2},\overline{N}_{1/2})$, and its anti-dominant form is given by
$(\overline{w_0}\overline{\theta},\overline{w_0}\overline{\phi}) = (\gamma_1^{\epsilon_1}, \dots, \gamma_{\overline{n}}^{\epsilon_{\overline{n}}})$. As before, we assume $\overline{n}$ is even by simplicity, and check the injectivity of the following intertwining operators:
\begin{align*}
    \left((\frac{1}{2}-2t)^+, \dots, \frac{-3}{2}^+, \frac{1}{2}^+, \dots, (2s - \frac{3}{2})^+\right) \Big(\gamma_1^{\epsilon_1}, \dots, \gamma_{\overline{n}}^{\epsilon_{\overline{n}}}\Big) &\xrightarrow{\omega} (w_0\theta,w_0\phi) \\
    \left((\frac{3}{2}-2s)^+, \dots, \frac{-1}{2}^+, \frac{3}{2}^+, \dots, (2t - \frac{1}{2})^+\right) \Big(\gamma_1^{-\epsilon_1}, \dots,  \gamma_{\overline{n}}^{-\epsilon_{\overline{n}}}\Big) &\xrightarrow{\omega^h} (w_0\theta^h,w_0\phi^h),
\end{align*}
We present the proof for $\omega$ with $s = t$ is even. The proof for $s = t+1$, as well as that of $\omega^h$ are almost identical.

Split $\omega$ into the following operators:
\begin{align*}
&\left((\frac{1}{2}-2s)^+, \dots, \frac{-3}{2}^+, \frac{1}{2}^+, \dots, (2s - \frac{3}{2})^+\right) \Big(\gamma_1^{\epsilon_1}, \dots, \gamma_{\overline{n}}^{\epsilon_{\overline{n}}}\Big)\\ \xrightarrow{\omega_1}\  & \Big(\gamma_1^{\epsilon_1}, \dots, \gamma_{\overline{n}}^{\epsilon_{\overline{n}}}\Big) \left((\frac{1}{2}-2s)^+, \dots, \frac{-3}{2}^+, \frac{1}{2}^+, \dots, (2s - \frac{3}{2})^+\right) \\
\xrightarrow{\omega_2}\  & \Big(\gamma_1^{\epsilon_1}, \dots, \gamma_{\overline{n}}^{\epsilon_{\overline{n}}}\Big)   \left((-2s +\frac{3}{2})^-, \dots, \frac{-1}{2}^-\right) \left((\frac{1}{2}-2s)^+, \dots, \frac{-3}{2}^{+} \right) \\
\xrightarrow{\omega_3}\ &(w_0\theta,w_0\phi)
\end{align*}
We now study each operator appearing above individually: Firstly, note that $\omega_1$ may not make the $\nu$-parameter more anti-dominant. However, note that $s \geq x_i, y_i$ for all $i$ by hypothesis, and hence
$(\frac{1}{2} -2s) - 2$, $(\frac{1}{2} - 2s) - 3$, $(2s-\frac{3}{2})+2$, $(2s-\frac{3}{2})+3$ are not equal to any of $\gamma_j$ for $1 \leq j \leq \overline{n}$. Consequently, one can apply Proposition \ref{inj_v_x} along with Proposition \ref{non_red_of_phi} as before, implying that $\omega = \omega_3 \circ \omega_2 \circ \omega_1$ is well-defined and injective for all spin-relevant $K$-types.

As for $\omega_2$, it comprises of the following intertwining operators:
\begin{align*}
&\left((\frac{1}{2}-2s)^+, \dots, \frac{-3}{2}^+, \frac{1}{2}^+, \dots, (2s-\frac{11}{2})^+, (2s-\frac{7}{2})^+, (2s - \frac{3}{2})^+\right) \\
\longrightarrow\  &\left((-2s + \frac{3}{2})^- \right)  \left((\frac{1}{2}-2s)^+, \dots, \frac{-3}{2}^+, \frac{1}{2}^+, \dots, (2s-\frac{11}{2})^+, (-2s+\frac{7}{2})^-\right)\\
\longrightarrow
\ &\left((-2s + \frac{3}{2})^-, (-2s+\frac{7}{2})^- \right) \left((\frac{1}{2}-2s)^+, \dots, \frac{-3}{2}^+, \frac{1}{2}^+, \dots, (2s-\frac{11}{2})^+\right)\\
\longrightarrow\ &\dots \\
\longrightarrow\ & \left((-2s + \frac{3}{2})^-, (-2s+\frac{7}{2})^-, \dots, \frac{-1}{2}^- \right)  \left((\frac{1}{2}-2s)^+, \dots,(\frac{-3}{2})^{+}\right)
\end{align*}
By Proposition \ref{inj_v_x} and Proposition \ref{non_red_of_phi} again, all of the above maps are well-defined and injective. Now $\omega_3$ is injective by applying Proposition \ref{type_A_inj} twice, as in the proofs of the previous cases. So the result follows.
\end{proof}

\begin{lemma} \label{case_non_unitary_and_small_comp}
Let $N_{1/2} \longleftrightarrow \begin{pmatrix}  * & s_{1} & \dots & s_q  \\  ** & t_{1} & \dots & t_q \end{pmatrix}$ be such that
\begin{itemize}
\item $s_i - t_i = 0$ or $1$; and
\item $\begin{pmatrix} * \\ ** \end{pmatrix} = \begin{pmatrix} a \\ b \end{pmatrix}$ or $\begin{pmatrix} c & d \\ e & f \end{pmatrix}$ are as given in Corollary \ref{cor-u} and Corollary \ref{cor-v}.
\end{itemize}
Consider $\pi = J_{D_{2n}}(M_{1/2} \cup M_{1/2}^h, N_{1/2} \cup N_{1/2}^h)$, and the induced module
\begin{equation} \label{eq-nonunitind}
\begin{aligned}
&\mathrm{Ind}_{A_{2j-1} \times D_{2n-2j}}^{D_{2n}}\left(J_A(\widetilde{A}_{1/2}\cup {\widetilde{A}_{1/2}}^h) \otimes   J_D(\widetilde{D}_{1/2}\cup {\widetilde{D}_{1/2}}^h)\right),
\end{aligned}
\end{equation}
(the omitted $\mu$-parameters have coordinates all equal to $\frac{1}{2}$), where
\begin{equation} \label{eq-uv}
\left(\widetilde{A}_{1/2},\widetilde{D}_{1/2}\right) \longleftrightarrow \begin{cases} \left(\begin{pmatrix} a \\ b \end{pmatrix},\ \begin{pmatrix}  s_{1} & \dots & s_q  \\  t_{1} & \dots & t_q \end{pmatrix}\right), &\text{if}\ \begin{pmatrix} * \\ ** \end{pmatrix} = \begin{pmatrix} a \\ b \end{pmatrix},\\
 \left(\begin{pmatrix} d \\ e \end{pmatrix},\ \begin{pmatrix}  c & s_{1} & \dots & s_q  \\  f & t_{1} & \dots & t_q \end{pmatrix}\right)  &\text{if}\ \begin{pmatrix} * \\ ** \end{pmatrix} = \begin{pmatrix}  c & d \\ e & f \end{pmatrix}. \end{cases}
\end{equation}
Then $\pi$ and the induced module \eqref{eq-nonunitind} have the same multiplicities for all $K$-types with highest weight $\eta(i)$ for all $0\leq i\leq n$.
\end{lemma}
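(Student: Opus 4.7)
The plan is to follow the template of Propositions \ref{single_mult} and \ref{two_string_mult}: realize $\pi$ as the image of the long intertwining operator $\iota_{w_0}: X(\theta,\phi) \to X(w_0\theta, w_0\phi)$, factor it as $\omega \circ \zeta \circ \kappa$ where $\zeta$ is the longest intertwining operator within the Levi $A_{2j-1} \times D_{2n-2j}$ (so that $\mathrm{im}(\zeta \circ \kappa)$ is the induced module \eqref{eq-nonunitind}), and show that $\kappa$ is surjective and $\omega$ is injective on $\mathrm{im}(\zeta)$ when restricted to $\mathcal{V}_{\eta(i)}$-isotypic spaces. By Hermitian duality, surjectivity of $\kappa$ is equivalent to injectivity of $\omega^h$ on $\mathrm{im}(\zeta^h)$, so it suffices to establish injectivity of both $\omega$ and $\omega^h$ on the spin-relevant $K$-types.

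For Case I where $\begin{pmatrix} * \\ ** \end{pmatrix} = \begin{pmatrix} a \\ b \end{pmatrix}$, I would first apply Lemma \ref{case_first_is_comp} iteratively to each Stein complementary series block $\begin{pmatrix} s_i \\ t_i \end{pmatrix}$, thereby transferring them one by one into the Type $A$ factor, then finish using Proposition \ref{single_mult} on the residual $\begin{pmatrix} a \\ b \end{pmatrix}$. For Case II the geometry is more subtle: the two columns $\begin{pmatrix} c \\ f \end{pmatrix}$ from the non-unitary block must migrate into the $D$-type factor together with the Stein blocks, while $\begin{pmatrix} d \\ e \end{pmatrix}$ forms the $A$-type factor. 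Mirroring Steps 1 and 3 of Proposition \ref{two_string_mult}, I would split $\omega$ into an intermediate arrangement that first isolates the $\begin{pmatrix} d \\ e \end{pmatrix}$ part, then applies Lemma \ref{case_first_is_comp} to transport the Stein blocks into the remaining $D$-type piece; note that since $c \geq d > e+1 \geq f+1$, the combined $D$-type parameter $\begin{pmatrix} c & s_1 & \dots & s_q \\ f & t_1 & \dots & t_q \end{pmatrix}$ satisfies the Brega inequalities \eqref{eq-unitary} and so falls under Theorem \ref{thm-bv}.

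At each stage, $\omega$ and $\omega^h$ decompose into sequences of Type $A$ intertwining operators, and I would use Proposition \ref{non_red_of_phi} to justify rewriting these possibly non-reduced expressions and then verify injectivity via Propositions \ref{inj_v_x} and \ref{type_A_inj}. The main obstacle is the arithmetic bookkeeping: each time a Stein parameter $\xi$ is moved past a stretch of non-unitary block parameters $\nu_1 < \dots < \nu_p$, the injectivity criterion demands $\xi \neq \nu_1 - 2,\, \nu_p + 2$ (matching signs) or $\xi \neq \nu_1 - 3,\, \nu_p + 3$ (opposite signs). Because the Stein blocks contribute parameters in $\frac{1}{2} + 2\bZ$ with $s_i - t_i \in \{0,1\}$ while the non-unitary blocks have strict gaps $b > a$ (Case I) or $d > e+1$ (Case II), the forbidden coincidences correspond precisely to the boundary configurations ruled out by these strict inequalities. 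Verifying this block by block, as was done concretely in Steps 2 and 3 of Proposition \ref{two_string_mult}, will complete the argument.
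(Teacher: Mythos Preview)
Your overall framework---realize $\pi$ as the image of the long intertwining operator, factor it through the longest Levi intertwining operator $\zeta$ for $A_{2j-1}\times D_{2n-2j}$, then reduce to showing injectivity of $\omega$ and $\omega^h$ on spin-relevant $K$-types via Propositions \ref{inj_v_x}, \ref{type_A_inj}, and \ref{non_red_of_phi}---is exactly the paper's strategy. But there is a genuine gap in how you execute it.

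For Case I you propose to ``apply Lemma \ref{case_first_is_comp} iteratively'' to strip off the Stein columns. This does not work as stated: Lemma \ref{case_first_is_comp} is formulated with the Stein column $\begin{pmatrix} s\\t\end{pmatrix}$ in the \emph{first} position, and its proof uses $s\ge x_i,y_i$ for all $i$ to verify the inequalities in Proposition \ref{inj_v_x}. In the present lemma the Stein columns sit \emph{after} the non-unitary block, so their top entries satisfy $s_i\le a<b$ and you cannot reorder to put them first. The paper accordingly does not invoke Lemma \ref{case_first_is_comp} as a black box; it says the argument is ``similar to'' that lemma, meaning one must redo the injectivity bookkeeping from scratch for this configuration. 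Relatedly, you say the Stein blocks are ``transferred into the Type A factor,'' but in the target induced module \eqref{eq-nonunitind} for Case I the Stein columns form $\widetilde{D}_{1/2}$ (the $D$-factor) while $\begin{pmatrix} a\\b\end{pmatrix}$ is $\widetilde{A}_{1/2}$; your description has this reversed.

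For Case II your plan is closer to the mark, and the paper's execution is essentially what you outline, but organized more directly: one forms the $\Sigma/\Delta$ decomposition as in Step 1 of Proposition \ref{two_string_mult}, now with the anti-dominant Stein block $(\gamma_1^{\epsilon_1},\dots,\gamma_{\overline n}^{\epsilon_{\overline n}})$ inserted between the $(d,e)$ and $(c,f)$ strings. The key move is the decomposition $\Delta=\Delta_3\circ\Delta_2\circ\Delta_1$, where $\Delta_1$ slides the Stein block past the $(d,e)$ string (injective by Proposition \ref{inj_v_x}), $\Delta_2$ is \emph{literally} the $\Delta$ already analyzed in Proposition \ref{two_string_mult} (so its injectivity is inherited wholesale, not re-proved), and $\Delta_3$ is handled by four applications of Proposition \ref{type_A_inj}. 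Your sketch gestures at this but again routes through Lemma \ref{case_first_is_comp}, which is the wrong tool; the clean reduction is to the $\Delta$ of Proposition \ref{two_string_mult} itself.
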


\begin{proof}
When $\begin{pmatrix} * \\ ** \end{pmatrix} = \begin{pmatrix} a \\ b \end{pmatrix}$, the argument is similar to the
Lemma \ref{case_first_is_comp}. Let us focus on the case $\begin{pmatrix} * \\ ** \end{pmatrix} = \begin{pmatrix} c & d \\ e & f \end{pmatrix}$:
For simplicity, assume that $c$, $f$, and $\overline{n}:=\sum_{i=1}^q(s_i+t_i)$ are even integers. Let $(\gamma_1^{\epsilon_1},\dots,\gamma_{\overline{n}}^{\epsilon_{\overline{n}}})$ be the anti-dominant of the parameter $(\overline{M}_{1/2},\overline{N}_{1/2})$ with $\overline{M}_{1/2} \longleftrightarrow \begin{pmatrix}  s_{1} & \dots & s_q  \\  t_{1} & \dots & t_q \end{pmatrix}$.
As before, one needs to show the injectivity of $\omega$ and $\omega^h$:
\begin{align*}
\left((\frac{1}{2}-2d)^+,\dots,(2e-\frac{3}{2})^+\right) z'\left(\gamma_1^{\epsilon_1},\dots,\gamma_{\overline{n}}^{\epsilon_{\overline{n}}};(2c-\frac{3}{2})^+,\dots,(\frac{1}{2}-2f)^+ \right)
& \xrightarrow{\omega}
(w_0\theta,w_0\phi) \\
\left((\frac{3}{2}-2e)^+,\dots,(2d-\frac{1}{2})^+\right) (z')^h\left(\gamma_1^{-\epsilon_1},\dots,\gamma_{\overline{n}}^{-\epsilon_{\overline{n}}}; (2f-\frac{1}{2})^+,\dots,(\frac{3}{2}-2c)^+ \right)
& \xrightarrow{\omega^h}
(w_0\theta^h,w_0\phi^h), 
\end{align*}
where $z', (z')^h \in W$ is chosen such that its conjugates in the above equations are anti-dominant,  over $\mathrm{Ind}_{A_{j-1} \times D_{n-j}}^{D_{n}}\left(J_A(\widetilde{A}_{1/2}) \otimes   J_D(\widetilde{D}_{1/2}) \right)$ and $\mathrm{Ind}_{A_{j-1} \times D_{n-j}}^{D_{n}}\left(J_A( {\widetilde{A}_{1/2}}^h) \otimes   J_D({\widetilde{D}_{1/2}}^h)\right)$  for all spin-relevant $K$-types.

We only study the case of $\omega$. As in Step 1 in the proof of Proposition \ref{two_string_mult}, rather than studying $\omega$, it suffices to study the injectivity of $\Delta$ over the image of $\Sigma$:
\begin{align*}
& \left(((2e-\frac{3}{2})^+,\dots,\frac{1}{2}-2d)^+\right) z'\left(\gamma_1^{\epsilon_1},\dots,\gamma_{\overline{n}}^{\epsilon_{\overline{n}}}; (2c-\frac{3}{2})^+,\dots,(\frac{1}{2}-2f)^+\right)\\
\xrightarrow{\Sigma}  & \left((\frac{1}{2}-2d)^+,\dots,(2e-\frac{3}{2})^+\right) \Big(\gamma_1^{\epsilon_1},\dots,\gamma_{\overline{n}}^{\epsilon_{\overline{n}}}\Big)\left((\frac{1}{2}-2f)^+,\dots,\frac{-3}{2}^+\right)\left((\frac{3}{2}-2c)^-,\dots,\frac{-1}{2}^{-}\right)\\
\xrightarrow{\Delta} & \ (w_0\theta,w_0\phi), \end{align*}
Write $\Delta$ as composition of $\Delta_3\circ\Delta_2\circ\Delta_1$:
\small
\begin{align*}
& \left((\frac{1}{2}-2d)^+,\dots,(2e-\frac{3}{2})^+\right) \Big(\gamma_1^{\epsilon_1},\dots,\gamma_{\overline{n}}^{\epsilon_{\overline{n}}}\Big)\left((\frac{1}{2}-2f)^+,\dots,\frac{-3}{2}^+\right)\left((\frac{3}{2}-2c)^-,\dots,\frac{-1}{2}^{-}\right)\\
\xrightarrow{\Delta_1}
&\Big(\gamma_1^{\epsilon_1},\dots,\gamma_{\overline{n}}^{\epsilon_{\overline{n}}}\Big) \left((\frac{1}{2}-2d)^+,\dots,(2e-\frac{3}{2})^+\right) \left((\frac{1}{2}-2f)^+,\dots,\frac{-3}{2}^+\right)\left((\frac{3}{2}-2c)^-,\dots,\frac{-1}{2}^{-}\right)  \\
\xrightarrow{\Delta_2}
&\Big(\gamma_1^{\epsilon_1},\dots,\gamma_{\overline{n}}^{\epsilon_{\overline{n}}}\Big) \left((\frac{1}{2}-2d)^+,\dots,\frac{-3}{2}^+\right)\left((\frac{1}{2}-2f)^+,\dots,\frac{-3}{2}^+\right)\left((\frac{3}{2}-2c)^-,\dots,\frac{-1}{2}^{-}\right)  \left((\frac{3}{2}-2e)^-,\dots,\frac{-1}{2}^-\right) \\
\xrightarrow{\Delta_3}&\ (w_0\theta,w_0\phi),
\end{align*}
\normalsize where $\Delta_1$ is injective and well-defined by Proposition \ref{inj_v_x} as usual. The $\Delta_2$ is injective due to the injectivity of the ``$\Delta$" in Case II. As for $\Delta_3$, it is injective by applying Proposition \ref{type_A_inj} four times. So $\Delta_3 \circ \Delta_2 \circ \Delta_1$ is well defined and injective over $\mathrm{im}(\Sigma)$ for all spin-relevant $K$-types.
\end{proof}

Combining Lemma \ref{case_first_is_comp} and Lemma \ref{case_non_unitary_and_small_comp}, we have:
\begin{proposition} \label{prop-nonunit2}
Let $N_{1/2} \longleftrightarrow \begin{pmatrix} s_1 & \dots & s_p & * & s_{p+1} & \dots & s_q  \\ t_1 & \dots & t_p & ** & t_{p+1} & \dots & t_q \end{pmatrix}$
be such that 
\begin{itemize}
\item $s_i - t_i = 0$ or $1$; and
\item $\begin{pmatrix} * \\ ** \end{pmatrix} = \begin{pmatrix} a \\ b \end{pmatrix}$ or $\begin{pmatrix} c & d \\ e & f \end{pmatrix}$ are as given in Corollary \ref{cor-u} and Corollary \ref{cor-v}.
\end{itemize}
Consider $\pi := J_{D_{2n}}(M_{1/2} \cup M_{1/2}^h,N_{1/2} \cup N_{1/2}^h)$, and the induced module
\begin{equation} \label{eq-nonunitind2}
\mathrm{Ind}_{\prod_{i = 1}^{p} A_{2j_i-1} \times A_{2j-1} \times D_m}^{D_{2n}}\begin{pmatrix} \bigotimes_{i=1}^p \mathrm{Ind}_{A_{j_i-1}  \times A_{j_i-1}}^{A_{2j_i-1}}\left(\mathrm{comp}_{1/2}(j_i, \frac{1}{2}) \otimes \mathrm{comp}_{1/2}(j_i, \frac{1}{2})^h\right) \otimes \\
J_A (\widetilde{A}_{1/2} \cup \widetilde{A}_{1/2}^h)\ \otimes\   J_D(\widetilde{D}_{1/2} \cup \widetilde{D}_{1/2}^h)\end{pmatrix},
\end{equation}
where $j_i := s_i+t_i$, and $\widetilde{A}_{1/2}$, $\widetilde{D}_{1/2}$ are as given in \eqref{eq-uv}. Then $\pi$ and \eqref{eq-nonunitind2} have the same multiplicities for all $K$-types with highest weight $\eta(i)$ for all $0\leq i\leq n$.
\end{proposition}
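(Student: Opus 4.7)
The plan is to prove the proposition by induction on $p$, alternating the two preceding lemmas.

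\emph{Base case} $p = 0$: The empty product $\prod_{i=1}^{0}$ collapses \eqref{eq-nonunitind2} (with $m = 2n - 2j$) to exactly the induced module appearing in Lemma \ref{case_non_unitary_and_small_comp}, so the multiplicity matching of every $\mathcal{V}_{\eta(i)}$ is immediate from that lemma.

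\emph{Inductive step:} Assume the statement holds for $p - 1$. I would apply Lemma \ref{case_first_is_comp} to $\pi$ with leading column $(s, t) = (s_1, t_1)$ (a Stein complementary series since $s_1 - t_1 \in \{0,1\}$) and with the rest of $N_{1/2}$ as the trailing columns. Setting $j_1 := s_1 + t_1$, this yields that $\pi$ and
\[
\mathrm{Ind}_{A_{2j_1-1} \times D_{2(n-j_1)}}^{D_{2n}}\Bigl(\mathrm{Ind}_{A_{j_1-1} \times A_{j_1-1}}^{A_{2j_1-1}}\bigl(\mathrm{comp}_{1/2}(j_1, \tfrac{1}{2}) \otimes \mathrm{comp}_{1/2}(j_1, \tfrac{1}{2})^h\bigr) \otimes \overline{\pi}\Bigr)
\]
share the multiplicity of every $\mathcal{V}_{\eta(i)}$ with $0 \leq i \leq n$, where $\overline{\pi}$ is the $D_{2(n-j_1)}$-module attached to
\[
\overline{N}_{1/2} \longleftrightarrow \begin{pmatrix} s_2 & \dots & s_p & * & s_{p+1} & \dots & s_q \\ t_2 & \dots & t_p & ** & t_{p+1} & \dots & t_q \end{pmatrix}.
\]
Since $\overline{N}_{1/2}$ still has the form required by the proposition, but with the number of left-hand Stein columns reduced from $p$ to $p-1$, the inductive hypothesis furnishes a multiplicity-matching decomposition for $\overline{\pi}$ as an induction of $p-1$ Stein factors together with $J_A(\widetilde{A}_{1/2} \cup \widetilde{A}_{1/2}^h) \otimes J_D(\widetilde{D}_{1/2} \cup \widetilde{D}_{1/2}^h)$. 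Induction in stages then assembles the outer $\mathrm{Ind}_{A_{2j_1-1} \times D_{2(n-j_1)}}^{D_{2n}}$ with this inner decomposition to produce exactly the module in \eqref{eq-nonunitind2}.

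The main bookkeeping concern is that the multiplicity match for the $\mathcal{V}_{\eta(i)}$ must propagate through each layer of parabolic induction. This is not a genuine obstacle: the two lemmas are already stated at the level of the $\mathcal{V}_{\eta(i)}$, and the bottom-layer argument of Section \ref{sec-bottom} guarantees that the multiplicities of the spin-relevant $K$-types of $D_{2n}$ are controlled by their restrictions to any Levi subgroup of the form $A_{2j_1-1} \times D_{2(n-j_1)}$ encountered above. Thus no extra intertwining analysis is required beyond the two lemmas themselves; the proof is essentially a repeated application of Lemma \ref{case_first_is_comp} until the core $\begin{pmatrix} * \\ ** \end{pmatrix}$ can be peeled off via Lemma \ref{case_non_unitary_and_small_comp}.
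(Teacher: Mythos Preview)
Your proposal is correct and takes essentially the same approach as the paper, which simply states that the proposition follows by ``combining Lemma~\ref{case_first_is_comp} and Lemma~\ref{case_non_unitary_and_small_comp}.'' Your induction on $p$ is a faithful way of making that one-line combination explicit: peel off the leftmost Stein column via Lemma~\ref{case_first_is_comp}, then invoke Lemma~\ref{case_non_unitary_and_small_comp} at the base.
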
 

\begin{corollary} \label{cor-nonunit2}
The $(\mathfrak{g},K)$-module $\pi$ in Proposition \ref{prop-nonunit2} is not unitary, and the Hermitian form is indefinite on the $K$-type with highest weights $\eta(2a+1)$ (in Case I) or $\eta(2e+2)$ (in Case II) respectively.
\end{corollary}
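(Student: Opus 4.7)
The plan is to combine Proposition \ref{prop-nonunit2} with a bottom-layer argument. By Proposition \ref{prop-nonunit2}, the multiplicity of each spin-relevant $K$-type $\mathcal{V}_{\eta(i)}$ in $\pi$ agrees with its multiplicity in the induced module \eqref{eq-nonunitind2}, and the corresponding Hermitian signatures agree. Consequently, it suffices to produce an indefinite Hermitian form on \eqref{eq-nonunitind2} at $\mathcal{V}_{\eta(q)}$, where $q = 2a+1$ in Case I and $q = 2e+2$ in Case II, and the conclusion will transport back to $\pi$.

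First I would verify that every tensor factor of the inducing module in \eqref{eq-nonunitind2} other than $J_A(\widetilde{A}_{1/2}\cup\widetilde{A}_{1/2}^h)$ is unitary. Each Stein piece $\mathrm{Ind}_{A_{j_i-1}\times A_{j_i-1}}^{A_{2j_i-1}}\bigl(\mathrm{comp}_{1/2}(j_i,\tfrac12)\otimes \mathrm{comp}_{1/2}(j_i,\tfrac12)^h\bigr)$ is unitary by Theorem \ref{thm-comp}. The factor $J_D(\widetilde{D}_{1/2}\cup\widetilde{D}_{1/2}^h)$ is unitary by Theorem \ref{thm-bv}: in Case I this is immediate from $s_i-t_i\in\{0,1\}$, and in Case II one uses $c\ge d>e+1\ge f+1$ to place $\binom{c}{f}$ at the head, so that the full parameter $\widetilde{D}_{1/2}$ satisfies \eqref{eq-unitary}.

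Next, exactly as in the proofs of Corollary \ref{cor-u} and Corollary \ref{cor-v}, Theorem \ref{thm-comp}(b) applied to the Stein-complementary-series expression of $J_A(\widetilde{A}_{1/2}\cup\widetilde{A}_{1/2}^h)$ shows that its Hermitian form is indefinite on the $\widetilde{U}$-type with highest weight
\[
(\tfrac12,\dots,\tfrac12) + (\underbrace{1,\dots,1}_{q},0,\dots,0,\underbrace{-1,\dots,-1}_{q}),
\]
with $q=2a+1$ in Case I (using the parameter $t=b-a+\tfrac12$) and $q=2e+2$ in Case II (using $t=d-e-\tfrac12$).

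Finally, by the bottom-layer rules recalled in Section \ref{sec-bottom}, the above $\widetilde{U}$-type is precisely the $\widetilde{K}\cap\widetilde{L}$-restriction of the spin-relevant $K$-type $\mathcal{V}_{\eta(q)}$ obtained by adding the same $\pm 1$-block at the $A_{2j-1}$-position of $\mu$ (the Stein and $J_D$ factors contribute only their lowest $\widetilde{K}\cap\widetilde{L}$-types on which their forms are positive definite). Hence $\mathcal{V}_{\eta(q)}$ is bottom-layer for \eqref{eq-nonunitind2}, and its Hermitian signature in \eqref{eq-nonunitind2} matches the indefinite signature of the $\widetilde{U}$-type above. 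Proposition \ref{prop-nonunit2} then transports this indefiniteness to $\pi$, yielding the claim. The main technical point requiring care is the verification that $\mathcal{V}_{\eta(q)}$ remains bottom-layer through the composition of parabolic inductions in \eqref{eq-nonunitind2}, but this reduces by induction in stages to the explicit weight-addition criterion of Section \ref{sec-bottom} applied separately at each factor.
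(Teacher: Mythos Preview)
Your proposal is correct and follows essentially the same approach as the paper. The paper's proof simply cites Corollaries \ref{cor-u} and \ref{cor-v} to assert that the induced module \eqref{eq-nonunitind2} has indefinite form at $\mathcal{V}_{\eta(2a+1)}$ or $\mathcal{V}_{\eta(2e+2)}$ (using that the Stein factors and the type $D$ factor are unitary), and then invokes Proposition \ref{prop-nonunit2}; you have unpacked those corollaries inline by reproducing the Theorem \ref{thm-comp}(b) and bottom-layer steps, but the logical structure is identical.
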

\begin{proof}
By Corollary \ref{cor-u} and Corollary \ref{cor-v}, the induced module \eqref{eq-nonunitind2} is nonunitary on the $K$-type $\mathcal{V}_{\eta(2a+1)}$ or $\mathcal{V}_{\eta(2e+2)}$ respectively, since the type A factors of \eqref{eq-nonunitind2}
other than $J_A (\widetilde{A}_{1/2} \cup \widetilde{A}_{1/2}^h)$ are all Stein complementary series, and the type D factor $J_D(\widetilde{D}_{1/2} \cup \widetilde{D}_{1/2}^h)$ is also unitary
by the results in Section \ref{sec-unitary}. Consequently, the result follows from Proposition \ref{prop-nonunit2}.
\end{proof}

\subsection{General case} We now begin the proof of non-unitarity in the general case:

\begin{lemma} \label{lem-nonunit3} \mbox{}\\
\begin{itemize}
\item[(a)] Let $n = \sum_i (a_i+b_i)$, and
$$N_{1/2} \longleftrightarrow \begin{pmatrix} a_1 & \dots & a_k \\ b_1 & \dots & b_k  \end{pmatrix}, \quad \quad a_i < b_i, \forall \ 1\leq i\leq k.$$
Write $\Pi := J_{D_{n}}(M_{1/2}, N_{1/2})$. Then there exists $G_{I} = Spin(2n_I,\mathbb{C})$, and a Levi subgroup $L_I = \prod_r A_{u_r-1} \times D_n$ of $G_I$ such that the induced module $\mathrm{Ind}_{L_I}^{G_I}\left(\bigotimes_r \mathrm{comp}_{1/2}(u_r;\frac{1}{2}) \otimes \Pi\right)$
has an irreducible lowest $K$-type subquotient $J_{D_{n_I}}\left(M_{1/2}^I,N_{1/2}^I\right)$ with
$$N_{1/2}^I \longleftrightarrow \begin{pmatrix}\widetilde{a}_1 & \dots & \widetilde{a}_{n_I} \\ \widetilde{a}_1 & \dots & \widetilde{a}_{n_I} \ \end{pmatrix}, \quad \quad \widetilde{a}_1 = b_1\ \ \text{and}\ \ \widetilde{a}_{n_I} = b_n.$$

\item[(b)] Let $n = \sum_i (c_i+e_i)$, and 
$$N_{1/2} \longleftrightarrow \begin{pmatrix} c_1 & \dots & c_{\ell} \\ e_1 & \dots & e_{\ell}  \end{pmatrix}, \quad \quad e_i +1 < c_{i+1}, \forall 1\leq i\leq l-1.$$
Write $\Psi := J_{D_n}(M_{1/2}, N_{1/2})$. Then there exist $G_{II} = Spin(2n_{II},\mathbb{C})$ and a Levi subgroup $L_{II} = \prod_r A_{v_r-1} \times D_n$ of $G_{II}$ such that the induced module
$\mathrm{Ind}_{L_{II}}^{G_{II}}\left(\bigotimes_r \mathrm{comp}_{1/2}(v_r;\frac{1}{2}) \otimes \Psi\right)$
has an irreducible lowest $K$-type subquotient $J_{D_{n_{II}}}\left(M^{II}_{1/2},N_{1/2}^{II}\right)$ with
$$N_{1/2}^{II} \longleftrightarrow \begin{pmatrix} \widetilde{c}_1 & \dots & \widetilde{c}_{n_{II}}\\ \widetilde{c}_1 -1 & \dots & \widetilde{c}_{n_{II}}-1 \ \end{pmatrix},\quad \quad \widetilde{c}_1 = c_1\ \  \text{and} \ \ \widetilde{c}_{n_{II}} = c_l.$$
\end{itemize}
Moreover, the above results hold analogously for the Hermitian duals $\Pi^h$
and $\Psi^h$.
\end{lemma}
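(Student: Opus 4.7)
The plan is to prove Part (a) by induction on the number of columns $k$; Part (b) follows an analogous induction on $\ell$, and the Hermitian-dual claims are obtained by dualizing the entire construction.

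For the base case $k = 1$ of Part (a), with $\Pi$ corresponding to $\begin{pmatrix} a \\ b \end{pmatrix}$ and $a < b$, the goal is to identify a collection of Stein factors $\{\mathrm{comp}_{1/2}(u_r, \tfrac{1}{2})\}_r$ whose combined $\nu$-parameters, unioned with $\nu_\Pi$, Weyl-conjugate in $W(D_{n_I})$ to the $\nu$-parameter of the diagonal target $\begin{pmatrix} \widetilde{a}_1 & \dots & \widetilde{a}_{n_I} \\ \widetilde{a}_1 & \dots & \widetilde{a}_{n_I} \end{pmatrix}$ with $\widetilde{a}_1 = \widetilde{a}_{n_I} = b$. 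Proposition \ref{prop:BVprop} then forces the lowest $K$-type subquotient to be the claimed $J_{D_{n_I}}(M_{1/2}^I, N_{1/2}^I)$, and irreducibility of this subquotient is verified by appealing to Theorem \ref{KL-thm} when the infinitesimal character avoids the reducibility locus, and to the intertwining-operator techniques of Section \ref{sec-in} for the remaining exceptional cases.

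For the inductive step, I would peel off the last column $\begin{pmatrix} a_k \\ b_k \end{pmatrix}$ via \eqref{eq:inclusion}, writing $\Pi$ as a lowest $K$-type subquotient of $\mathrm{Ind}_{A_{a_k+b_k-1} \times D_{n-a_k-b_k}}^{D_n}(J_A \otimes J_D)$. Applying the base case to the $A$-factor (contributing diagonal entries $\widetilde{a}_j$ equal to $b_k$ at the tail) and the inductive hypothesis to the $D$-factor for the remaining $k-1$ columns (contributing the leading entries including $\widetilde{a}_1 = b_1$), and then assembling via induction in stages as in Section \ref{sec-bottom}, produces the full induced module $\mathrm{Ind}_{L_I}^{G_I}(\bigotimes_r \mathrm{comp}_{1/2}(u_r,\tfrac{1}{2}) \otimes \Pi)$. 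The descending chain $b_1 \geq \dots \geq b_k$ ensures that the $\widetilde{a}_j$'s can be arranged in dominant order with the required endpoint conditions $\widetilde{a}_1 = b_1$ and $\widetilde{a}_{n_I} = b_k$.

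The main obstacle is the explicit combinatorial choice of $u_r$'s forcing the Weyl-conjugacy of infinitesimal characters. The arithmetic-progression structure of each $\nu_{\mathrm{comp}_{1/2}(u_r, 1/2)}$ (centered about zero, step $2$) allows these Stein factors to be stacked so as to absorb the asymmetry in $\nu_\Pi$ caused by $a_i < b_i$, and an even sign-flip in $W(D_{n_I})$ then symmetrizes the union into the target diagonal parameter. Part (b) proceeds by the same scheme but targets columns of the form $\begin{pmatrix} \widetilde{c}_j \\ \widetilde{c}_j - 1 \end{pmatrix}$ (corresponding to the saturated boundary case $y_i + 1 = x_i$ of \eqref{eq-unitary}), with the $v_r$'s chosen to fill the inter-column gaps $e_i + 1 < c_{i+1}$; the analogous combinatorial matching and irreducibility check close the argument.
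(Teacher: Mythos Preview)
Your reduction via Proposition~\ref{prop:BVprop} is correct in spirit: the lowest $K$-type subquotient of $\mathrm{Ind}_{L_I}^{G_I}(\bigotimes_r\mathrm{comp}_{1/2}(u_r,\tfrac12)\otimes\Pi)$ is simply $J(\mu',\nu')$ where $(\mu',\nu')$ is the union of the inducing parameters, so the lemma is purely a combinatorial statement about how that union re-encodes under \eqref{eq-strings}. Two things go wrong in your write-up. First, the irreducibility check is a red herring: $J(\mu,\nu)$ is \emph{by definition} the unique irreducible subquotient containing $\mathcal{V}_{\{\mu\}}$, so neither Theorem~\ref{KL-thm} nor the intertwining-operator machinery of Section~\ref{sec-in} plays any role here. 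Second --- and this is the real gap --- you never specify the $u_r$'s or carry out the combinatorics; saying ``the main obstacle is the explicit combinatorial choice'' and then invoking ``arithmetic-progression structure'' and ``even sign-flips'' is not a proof. (Sign-flips in $W(D_{n_I})$ are in fact irrelevant: the encoding \eqref{eq-strings} is already a $W$-canonical form.) Your induction on $k$ also has a structural mismatch: the base case is stated for a type-$D$ module, yet in the inductive step you attempt to apply it to a peeled-off type-$A$ factor.

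The paper avoids induction on $k$ altogether in favor of an explicit greedy construction with decreasing invariant $\sum_i(b_i-a_i)$. One inducts by the single Stein factor $\mathrm{comp}_{1/2}(2a_1+1,\tfrac12)$, whose $\nu$-string corresponds under \eqref{eq-complementary} to the column $\begin{pmatrix}a_1+1\\a_1\end{pmatrix}$. Because the $x$- and $y$-chains in \eqref{eq-strings} are sorted independently, this inserts $a_1+1$ at the head of the $x$-chain and $a_1$ into the $y$-chain at the slot $b_i>a_1\geq b_{i+1}$; the new matrix still satisfies $a_j'\leq b_j'$ columnwise and has strictly smaller $\sum(b_j'-a_j')$. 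Iterating with sizes $2a_1+3,\dots,2b_1-1$ forces the leading column to $\begin{pmatrix}b_1\\b_1\end{pmatrix}$, and one then repeats on the first remaining non-diagonal column. After exactly $\sum_i(b_i-a_i)$ insertions every column is diagonal. Part~(b) is the analogous insertion of columns $\begin{pmatrix}m\\m-1\end{pmatrix}$, as illustrated in Example~\ref{eg-ind}.
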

\begin{proof}
We only present the proof for (a). Let $N_{1/2} \longleftrightarrow \begin{pmatrix} a_1 & \dots & a_k \\ b_1 & \dots & b_k  \end{pmatrix}$ be as given above. Let $\mathrm{comp}_{1/2}(2a_1+1,\frac{1}{2})$ be the one-sided twisted complementary series of type $A_{2a_1}$ with $\nu$-parameter corresponding to $\begin{pmatrix} a_1+1 \\ a_1  \end{pmatrix}$ (cf. \eqref{eq-complementary}). Then the induced module
$$\mathrm{Ind}_{A_{2a_1}\times D_n}^{D_{n+2a_1+1}}\left(\mathrm{comp}_{1/2}(2a_1+1,\frac{1}{2}) \otimes \widetilde{\Pi}\right)$$
has lowest $K$-type subquotient corresponding to the $\nu$-parameter
$$\begin{pmatrix} {\bf a_1+1} & a_1 & \dots & a_{i-1} &a_i & a_{i+1} & \dots & a_k \\ b_1 & b_2 & \dots & b_{i} & {\bf a_1} & b_{i+1} & \dots & b_k  \end{pmatrix},$$
where $1\leq i\leq l$ such that $b_{i} > a_1 \geq b_{i+1}$.

By continuously inducing $\mathrm{comp}_{1/2}(2a_1+3,\frac{1}{2})$, $\dots$, $\mathrm{comp}_{1/2}(2b_1-1,\frac{1}{2})$ to the above induced module, the resulting module has $\sigma_L$-coordinate of the form
$$\begin{pmatrix} b_1 & a_1' & \dots  & a_{k'}' \\ b_1 & b_1' & \dots & b_{k'}'  \end{pmatrix},$$
with $a_{k'}', b_{k'}' \leq b_k$ by construction, noting that $\begin{pmatrix} a_1' & \dots  & a_{k'}' \\ b_1' & \dots & b_{k'}'  \end{pmatrix}$ satisfies $a_{i}'\leq b_{i}'$ for all $1\leq i\leq k'$ and $\sum_{i=1}^{k'} b_i'-\sum_{i=1}^{k'} a_i'<\sum_{i=1}^{k} b_i-\sum_{i=1}^{k} a_i$. It may no longer satisfy the hypothesis in (a). Suppose $1 \leq \ell \leq k'$ is the smallest number such that $a_m' < b_m'$. Then one automatically has $a_1' = b_1'$, $\dots$, $a_{m-1}' = b_{m-1}'$. Repeat the same induction on $a_m' < b_m'$, after inducing $(\sum_{i=1}^{k} b_i-\sum_{i=1}^{k} a_i)$ complementary series as above, the $a$-parameters will be equal to $b$-parameters.  
\end{proof}

\begin{example} \label{eg-ind}
We present an example for (b). Let $N_{1/2} \longleftrightarrow \begin{pmatrix} 5 & 4 & 4 \\ 2 & 2 &  0 \end{pmatrix}$, the strings of inductions needed to get to $\begin{pmatrix} \widetilde{c}_1 & \dots & \widetilde{c}_{n_{II}} \\ \widetilde{c}_1 -1 & \dots & \widetilde{c}_{n_{II}}-1 \ \end{pmatrix}$ is listed below. At each induction step, we use $\stackrel{a}{\longrightarrow}$ to denote the induction of a type $A_{2a-1}$-factor $\mathrm{comp}_{1/2}(2a,\frac{1}{2})$ with infinitesimal character corresponding to $\begin{pmatrix} a \\ a \end{pmatrix}$ by Equation \eqref{eq-complementary}:
\begin{align*}
&\begin{pmatrix} 5 & 4 & 4 \\ 2 & 2 &  0 \end{pmatrix}  \stackrel{3}{\to}
\begin{pmatrix} 5 & 4 & 4 & {\bf 3} \\ {\bf 3} & 2 & 2 & 0  \end{pmatrix}  \stackrel{4}{\to}
\begin{pmatrix} 5 &  {\bf 4} &  4 & 4 & 3 \\ {\bf 4} & 3 & 2 & 2 & 0  \end{pmatrix}  \stackrel{3}{\to}
\begin{pmatrix} 5 &  4 &  4 & 4 & {\bf 3} & 3 \\ 4 & 3 & {\bf 3} & 2 & 2 &  0 \end{pmatrix} \stackrel{3}{\to} \\
  & \begin{pmatrix} 5 &  4 &  4 & 4 & {\bf 3} & 3 & 3 \\ 4 & 3 & 3 & {\bf 3} & 2 & 2 & 0  \end{pmatrix} \stackrel{2}{\to}
\begin{pmatrix} 5 &  4 &  4 & 4 & 3 & 3 & 3 & {\bf 2} \\ 4 & 3 & 3 & 3 & 2 & 2 & {\bf 2} & 0 \end{pmatrix}  \stackrel{1}{\to}
\begin{pmatrix} 5 &  4 &  4 & 4 & 3 & 3 & 3 & 2 & {\bf 1} \\ 4 & 3 & 3 & 3 & 2 & 2 & 2 & {\bf 1} & 0\end{pmatrix}
\end{align*}
\end{example}

\begin{remark} \label{rmk-Xcoord}
In the case when $\sigma_L \longleftrightarrow \begin{pmatrix} x_1 & \dots & x_k \\ y_1 & \dots & y_k  \end{pmatrix}$
satisfies \eqref{eq-unitary}, the above induction arguments is used in \cite{Br99} to prove the unitarity of $\pi = J_{D_{2n}}J\left(M_{1/2} \cup M_{1/2}^h, N_{1/2} \cup N_{1/2}^h \right)$. More precisely, it is proved that at each step of parabolic induction in the proof of Lemma \ref{lem-nonunit3}, the resulting induced module is irreducible.

The irreducibility result relies heavily on the associated variety theorem and multiplicity one theorem in \cite[Proposition 3.1 -- 3.2]{Br99}. If \eqref{eq-unitary} is not satisfied, neither of them would necessarily hold.
\end{remark}

We are now in the position to prove Theorem \ref{thm-main}. Let $\pi = J_{D_{2n}}\left(M_{1/2} \cup M_{1/2}^h, N_{1/2} \cup N_{1/2}^h\right)$ such that $N_{1/2}$ does not satisfy \eqref{eq-unitary}, then it must be of the form:
$$N_{1/2} \longleftrightarrow \begin{pmatrix} x_1 & \dots & x_a & z_1 & \dots & z_b & x_{a+1} & \dots & x_c & z_{b+1} & \dots & z_d & \dots \\  y_1 & \dots & y_a & w_1 & \dots & w_b & y_{a+1} & \dots & y_c & w_{b+1} & \dots & w_d  & \dots \end{pmatrix},$$
where
\begin{itemize}
\item Each $\begin{pmatrix} x_1 & \dots & x_a \\  y_1 & \dots & y_a \end{pmatrix}$, $\begin{pmatrix} x_{a+1} & \dots & x_c \\  y_{a+1} & \dots & y_c \end{pmatrix}$, $\dots$ satisfies \eqref{eq-unitary};
\item $y_a +1 \geq z_1$, $y_c+1 \geq z_{b+1}$, $\dots$; and
\item Each $\begin{pmatrix} z_1 & \dots & z_b \\  w_1 & \dots & w_b \end{pmatrix}$, $\begin{pmatrix} z_{b+1} & \dots & z_d \\  w_{b+1} & \dots & w_d \end{pmatrix}$, $\dots$ is of the form given in Lemma \ref{lem-nonunit3}(a) or Lemma \ref{lem-nonunit3}(b). In particular, we
further require $w_b+1 < x_{a+1}$ (respectively $w_d+1 < x_{c+1}$, $\dots$) if the $z_i, w_i$-coordinates is in the latter case.
\end{itemize}

\medskip
We divide the proof into 3 steps:

\noindent {\bf Step 1:} By applying Lemma \ref{lem-nonunit3} on the parameters $\begin{pmatrix} z_{b+1} & \dots & z_d \\ w_{b+1} & \dots & w_d  \end{pmatrix}$, $\dots$, one can induce $J(M_{1/2},N_{1/2})$ (resp. $J(M_{1/2}^h,N_{1/2}^h)$) and hence $\pi$ to a larger $Spin$ group $G^+ = Spin(4n^+,\mathbb{C})$ by some $\mathrm{comp}_{1/2}(j,\frac{1}{2})$ (resp. $\mathrm{comp}_{1/2}(j,\frac{1}{2})^h$) such that
$$\mathrm{Ind}_{M^+}^{G^+}\left( \bigotimes_r \mathrm{Ind}_{A_{j_r}-1 \times A_{j_r}-1}^{A_{2j_r-1}}\left(\mathrm{comp}_{1/2}(j_r+1,\frac{1}{2}) \otimes \mathrm{comp}_{1/2}(j_r+1,\frac{1}{2})^h\right) \otimes \pi\right)$$
has the lowest $K$-type subquotient $\pi^+ =  J_{D_{2n^+}}(M^+_{1/2} \cup (M^+_{1/2})^h, N^+_{1/2} \cup (N^+_{1/2})^h)$ satisfying
$$N^+_{1/2} \longleftrightarrow \begin{pmatrix} x_1 & \dots & x_a & z_1 & \dots & z_b & x_{a+1} & \dots & x_c & z_{b+1}^+ & \dots & z_{d^+}^+ & \dots \\  y_1 & \dots & y_a & w_1 & \dots & w_b & y_{a+1} & \dots & y_c & w_{b+1}^+ & \dots & w_{d^+}^+  & \dots \end{pmatrix},$$
where all $z_q^+ - w_q^+$ are equal to $0$ or $1$ for all $q \geq b+1$.

\medskip
\noindent {\bf Step 2:}  By applying the proof of Lemma \ref{lem-nonunit3} to the parameters $\begin{pmatrix} z_1 & \dots & z_b \\ w_1 & \dots & w_b  \end{pmatrix}$, one can induce $J(M^+_{1/2}, N^+_{1/2})$ (resp. $J((N^+_{1/2},(N^+_{1/2})^h))$ and hence $\pi_+$ by some $\mathrm{comp}_{1/2}(j,\frac{1}{2})$ (resp. $\mathrm{comp}_{1/2}(j,\frac{1}{2})^h$) to obtain an induced module in $G^{++} = Spin(4n^{++},\mathbb{C})$, whose lowest $K$-type subquotient $\pi^{++} =  J_{D_{2n^{++}}}(M^{++}_{1/2} \cup (M^{++}_{1/2})^h, N^{++}_{1/2} \cup (N^{++}_{1/2})^h)$ satisfying
\begin{align*}
&N^{++}_{1/2} \longleftrightarrow \\
&\begin{pmatrix} x_1 & \dots & x_a & z_1' & \dots & z_u' & * & z_{u+1}' & \dots & z_{b^+}' & x_{a+1} & \dots & x_c & z_{b+1}^+ & \dots & z_{d^+}^+ & \dots \\  y_1 & \dots & y_a & w_1' & \dots & w_u' & ** & w_{u+1}' & \dots & w_{b^+}' & y_{a+1} & \dots & y_c & w_{b+1}^+ & \dots & w_{d^+}^+  & \dots \end{pmatrix},\end{align*}
where all $z_i', w_i'$ satisfy \eqref{eq-unitary}, and $\begin{pmatrix} * \\ ** \end{pmatrix} = \begin{pmatrix} a \\ b \end{pmatrix}$ or $\begin{pmatrix} c & d \\ e & f \end{pmatrix}$ as in Corollary \ref{cor-u} or Corollary \ref{cor-v}.

\medskip
\noindent {\bf Step 3:}  By applying Remark \ref{rmk-Xcoord} to the $x_i, y_i$ and $z_i', w_i'$ coordinates in Step 2,
one can induce $\pi^{++}$ further to get a induced module whose lowest $K$-type subquotient $\pi^{+++} = J_{D_{2n^{+++}}}(M^{+++}_{1/2} \cup (M_{1/2}^{+++})^h,N_{1/2}^{+++} \cup (N_{1/2}^{+++})^h)$ satisfying
$$N^{+++}_{1/2} \longleftrightarrow \begin{pmatrix} s_1 & \dots & s_p & * & s_{p+1} & \dots & s_q \\  t_1 & \dots & t_p & ** & t_{p+1} & \dots & t_q  \end{pmatrix},$$
as in the hypothesis of Proposition \ref{prop-nonunit2}.

\medskip
To conclude, $\pi^{+++}$ is the lowest $K$-type subquotient of the induced module
\begin{equation} \label{eq-nonunit4}
\mathrm{Ind}_{M^{+++}}^{G^{+++}}\left( \bigotimes_w \mathrm{Ind}_{A_{j_w}-1 \times A_{j_w}-1}^{A_{2j_w-1}}\left(\mathrm{comp}_{1/2}(j_w+1,\frac{1}{2}) \otimes \mathrm{comp}_{1/2}(j_w+1,\frac{1}{2})^h\right) \otimes \pi\right).
\end{equation}
Suppose on the contrary that $\pi$ is unitary, then the inducing module of \eqref{eq-nonunit4} is unitary, since the other constituents
are all Stein complementary series. Consequently,
all the irreducible constituents of
\eqref{eq-nonunit4} are unitary.
However, Corollary \ref{cor-nonunit2} implies that $\pi^{+++}$ is not unitary. So we have a contradiction, implying that $\pi$
is not unitary. \qed

\section{Genuine unitary dual of $Spin(2n+1,\mathbb{C})$} \label{sec-odd}
In this section, we deal with the genuine unitary dual of $G=Spin(2n+1,\mathbb{C})$ similarly to the case of $Spin(2n,\mathbb{C})$. Fix a choice of positive simple roots
given by the Dynkin diagram:
\begin{center}
\begin{tikzpicture}
\draw
    (-01,0) node[circle,draw=black, fill=white,inner sep=0pt,minimum size=5pt,label=below:{\tiny $e_1 - e_2$}] {}
 -- (0,0) node[circle,draw=black, fill=white,inner sep=0pt,minimum size=5pt,label=above:{\tiny $e_2 - e_3$}] {};

\draw (1,0) node {$\dots$};


\draw
(2,0) node[circle,draw=black, fill=white,inner sep=0pt,minimum size=5pt,label=below:{\tiny $e_{n-2} - e_{n-1}$}] {}
 (3,0) node[circle,draw=black, fill=white,inner sep=0pt,minimum size=5pt,label=above:{\tiny $e_n$}] {};
\draw[thin](2.07,0.05)--(2.93,0.05);
\draw[thin](2.07,-0.05)--(2.93,-0.05);
\draw[thin](0.1,0)--(0.4,0);
\draw[thin](1.6,0)--(1.9,0);
\draw (2.5,0) node {$>$}; 
\end{tikzpicture}\end{center} 

\medskip
Let $J(\mu,\nu)=J(\lambda_L;\lambda_R)$ be an irreducible Hermitian $(\mathfrak{g},K)$-module such that $\nu$ has real coordinates. 
By the bottom-layer argument as in type D, the classification problem can be reduced to the case of $\mu=(\frac{1}{2},\dots,\frac{1}{2})$.

Similar to the type D case, we make the following definition:
\begin{definition}
For $0\leq i\leq n$, let $\eta(i)=(\underbrace{\frac{3}{2},\dots,\frac{3}{2}}_i,\underbrace{\frac{1}{2},\dots,\frac{1}{2}}_{n-i})$. The $K$-types $\mathcal{V}_{\eta(i)}$ are called \textbf{spin-relevant}\ $K$-\textbf{types}.
\end{definition}

For any $t\in (-1,1]$, let $N_t$ be all the coordinates $\nu_i$ of $\nu$ satisfying $\nu_i - t\in 2\mathbb{Z}$, and let $M_t$ be the corresponding $\mu$-parameter. Assume that $J(\mu, \nu)$ is unitary, with
$$(\mu,\nu) = \mathop{\bigsqcup}\limits_{t\in T} (M_t, N_t)$$
for a finite subset $T \subseteq (-1,1]$. If $t\in [0,\frac{1}{2})\cup (\frac{1}{2},1]$, then by similar arguments as in the paragraphs below Equation \eqref{eq-bv}, we have 
\[\begin{cases}J_A(M_t\cup M_{-t} \cup -M_{1-t}\cup -M_{-1+t},N_t\cup N_{-t} \cup N_{1-t}\cup N_{-1+t}), & t\in (0,\frac{1}{2})\cup (\frac{1}{2},1)\\ 
J_A(M_0\cup -M_{1},N_0\cup N_{1}), & t = 0, 1\end{cases}\]
must be Stein complementary series of type A. So we can further reduce our study of unitarity to that of 
\begin{equation} \label{eq-bhalf}
    J(M_{1/2} \cup M_{1/2}^h,N_{1/2} \cup N_{1/2}^h) = \mathrm{Ind}^{B_{2n}}_{B_n \times B_n}(J(M_{1/2},N_{1/2}) \otimes J(M_{1/2}^h,N_{1/2}^h)).
\end{equation}

\subsection{Intertwining operators for type B}
In this section, we study the intertwining operator of
principal series modules over
$\mathcal{V}_{\eta(i)}$. 
 The results are
analogous to that of Section \ref{sec-nonunit}, and we will be brief on some of the proofs in this section. 

Notice that all the results in section \ref{sec-in} are applicable in type B. In particular, by \eqref{phi_+-}, the intertwining operator on the short root in type B
\begin{equation} \label{eq-tau}
\tau: \left(a^{\epsilon}\right) \longrightarrow \left(-a^{-\epsilon}\right)
\end{equation}
is isomorphic for all spin-relevant $K$-types except when $a = \pm \frac{3}{2}$.

\subsection{A key reduction}
In Section \ref{sec-main}, an important ingredient  in the statement of the main theorem is that for $\mu = (\frac{1}{2},\dots,\frac{1}{2})$, the type D spin-relevant $K$-type $\eta(1) = (\frac{3}{2},\frac{1}{2},\dots,\frac{1}{2},-\frac{1}{2})$ is type $A_{n-1}$ bottom-layer. However, this is no longer true for
type B. 

To remedy this issue, we begin by generalizing Proposition \ref{inj_v_x} and \ref{type_A_inj}:
\begin{lemma} \label{lem-eta1}
Let $p+q = n$, and $(\nu_1^{\delta}, \dots, \nu_p^{\delta})$ and $(\xi_1^{\epsilon_1}, \dots, \xi_q^{\epsilon_q})$ be the $(\mu,\nu)$-parameters
in Proposition \ref{inj_v_x} and \ref{type_A_inj}. Suppose $\epsilon_i = -\delta$ for all $1 \leq i \leq p$. Then the intertwining operators in Proposition \ref{inj_v_x} and Proposition \ref{type_A_inj} is always well-defined and injective on the $\mathcal{V}_{\eta(1)}$-isotropic subspaces.
\end{lemma}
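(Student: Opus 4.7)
The plan is to show that under the hypothesis $\epsilon_i=-\delta$ for every $i$, each simple-reflection factor appearing in the factorization of the intertwining operators of Propositions \ref{inj_v_x} and \ref{type_A_inj} acts by an invertible map on any $\mathcal{V}_{\eta(1)}$-isotopic subspace, eliminating the need for any condition on the $\xi_l$'s. First I would observe that each such operator is a composition of simple reflections $s_\alpha$ with $\alpha=e_i-e_{i+1}$, and the sign hypothesis forces every reflection to swap a $\mu$-coordinate $\tfrac{\delta}{2}$ past one equal to $-\tfrac{\delta}{2}$. Hence $\langle \mu,\alpha\rangle=\pm 1$ at every stage, placing us exclusively in Case~(ii) of \eqref{phi_+-}, and since the property that $\mu$ has all coordinates in $\{\pm\tfrac{1}{2}\}$ is preserved under each $s_\alpha$, this remains true throughout the composition.

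The key technical input would then be a weight-structure analysis of $\mathcal{V}_{\eta(1)}$. Starting from the tensor decomposition
\[
\mathcal{V}_{\omega_1}\otimes \mathcal{V}_{\omega_n} \cong \mathcal{V}_{\eta(1)}\oplus \mathcal{V}_{\omega_n},
\]
where $\omega_1$ is the vector and $\omega_n$ the spin fundamental weight of $B_n$, one sees that every weight of $\mathcal{V}_{\eta(1)}$ is either a pure spin weight $\tfrac{1}{2}(\pm 1,\ldots,\pm 1)$ or has exactly one coordinate equal to $\pm\tfrac{3}{2}$ with all others in $\{\pm\tfrac{1}{2}\}$. Thus, for a pure-spin $\mu$ and $\alpha=e_i-e_{i+1}$ with $\langle \mu,\alpha\rangle=\pm 1$, the weight $\mu\pm\alpha$ would carry two coordinates of absolute value $\tfrac{3}{2}$ and therefore does not occur in $\mathcal{V}_{\eta(1)}$. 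Consequently, the $\alpha$-string of $\mu$ inside $\mathcal{V}_{\eta(1)}$ is precisely $\{\mu,s_\alpha\mu\}$, every irreducible $U(\alpha)$-submodule meeting $\mathcal{V}_{\eta(1)}^{\mu}$ is $2$-dimensional, and the $\dim V=4$ line of \eqref{phi_+-} is simply never invoked.

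Finally, I would assemble these simple-root computations into the full operators of Propositions \ref{inj_v_x} and \ref{type_A_inj} by appealing to Proposition \ref{non_red_of_phi}, concluding that both remain well-defined and injective on $\mathcal{V}_{\eta(1)}$-isotopic subspaces with no residual constraint on the $\xi_l$'s. The main obstacle is the weight-structure claim for $\mathcal{V}_{\eta(1)}$: it is precisely what excludes the $4$-dimensional $U(\alpha)$-submodules and the associated potential pole at $\langle \check{\alpha},\nu\rangle=\pm 3$. Once that is in hand, both the well-definedness at a priori singular parameters and the injectivity follow at once from the $\dim V=2$ case of \eqref{phi_+-}, in which $\phi_{s_\alpha}$ acts as the invertible operator $t_\alpha$ independently of $\nu$.
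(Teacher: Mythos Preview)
Your argument is correct and rests on the same structural fact as the paper's: every weight of $\mathcal{V}_{\eta(1)}$ has at most one coordinate of absolute value $\tfrac{3}{2}$. The paper packages this at the level of $\widetilde{GL(n)}$-types via Lemma~\ref{int_of_1/2_-1/2}(b), observing that the only $\mathcal{A}_\eta$ on which the operator can fail has $\eta$ carrying an even positive number of $\pm\tfrac{3}{2}$'s and hence cannot occur in $\mathcal{V}_{\eta(1)}|_{\widetilde{GL(n)}}$, whereas you unwind the same observation down to individual simple reflections and $SL(2)$-strings, showing directly that only the $\dim V=2$ branch of \eqref{phi_+-} is ever invoked. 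The two routes are equivalent; yours is slightly more explicit and bypasses the appeal to Lemma~\ref{int_of_1/2_-1/2}, while the paper's formulation makes clearer how the argument slots into the inductive framework of Propositions~\ref{inj_v_x}--\ref{type_A_inj}.
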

\begin{proof}
    In both cases, the first occurrence (if at all) of non-injectivity or non-well definedness is at the $\mathcal{A}_{\eta}$-isotropic subspace with $\eta = (\frac{1}{2}, \dots, \frac{1}{2},\frac{-1}{2},\dots,\frac{-1}{2}) + (1,\dots,1,0,\dots,0,-1,\dots,-1)$ with an equal positive number ($\leq \min\{p,q\}$) of $1$'s and $-1$'s. So $\eta$ has an even number of $\pm \frac{3}{2}$'s, and the result follows from induction in stages.
\end{proof}

\begin{proposition}\label{Theta_1}
Let $J(M_{1/2},N_{1/2})$ be as given in Equation \eqref{eq-bhalf}. Construct the strings of coordinates $N^{A}_{1/2}:=\alpha$, $N^{B}_{1/2}:=(\beta_1,\dots,\beta_k)$ as follows:
\begin{itemize} 
\item[(i)] $\beta_1$ is the longest subsequence of $N_{1/2}$ such that the coordinates of $\beta_1$ contain $\frac{-3}{2}$ and, upon rearranging into {\bf ascending} order, all adjacent coordinates of $\beta_1$ differ by $2$. 
\item[(ii)] Remove a copy of the coordinates in (i), and repeat (i) on the remaining coordinates to obtain $\beta_2,\dots,\beta_k$, until there is no $\frac{-3}{2}$ coordinates left.
\item[(iii)] Let $\alpha$ be the remaining coordinates with ascending order. 
\end{itemize}
Let $M^A_{1/2}$, $M^B_{1/2}$ be  $\mu$-parameters corresponding to $N^{A}_{1/2}$, $N^{B}_{1/2}$. Then the irreducible module $J(M_{1/2} \cup M_{1/2}^h, N_{1/2}\cup N_{1/2}^h)$  and 
\[\mathrm{Ind}_{M}^G\big(J_A(M^A_{1/2} \cup M^A_{-1/2},N^A_{1/2}\cup N^A_{-1/2})\otimes J_B(M^B_{1/2}\cup M^B_{-1/2}, N^B_{1/2}\cup N^B_{-1/2})\big)\] 
have the same multiplicities on the first spin-relevant $K$-type $\mathcal{V}_{\eta(1)}$.
\end{proposition}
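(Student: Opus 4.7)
Following the strategy of Propositions \ref{single_mult} and \ref{two_string_mult}, I would decompose the long intertwining operator on the principal series $X(\theta,\phi)$ as
$$X(\theta,\phi) \xrightarrow{\kappa} X(\theta_1,\phi_1) \xrightarrow{\zeta} X(\theta_2,\phi_2) \xrightarrow{\omega} X(w_0\theta, w_0\phi),$$
where $\zeta$ is the longest intertwining operator of the Levi $M$ of type $A_{2|N^A|-1} \times B_{2|N^B|}$, so that $\mathrm{im}(\zeta)$ is precisely the induced module appearing on the right-hand side. Writing the Hermitian-dual chain the same way, $\kappa$ and the Hermitian-adjoint of $\omega$ are adjoint operators, so it suffices to verify that $\omega$ and the corresponding $\omega^h$ are injective on the $\mathcal{V}_{\eta(1)}$-isotropic subspaces of $\mathrm{im}(\zeta)$ and $\mathrm{im}(\zeta^h)$, respectively.

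The operator $\omega$ decomposes into a sequence of elementary moves of two kinds: type A swaps across long roots $e_i - e_{i+1}$, and short-root crossings $\tau$ of \eqref{eq-tau}. For type A swaps exchanging coordinates of opposite $\mu$-sign (those produced when the Hermitian-dual block must be threaded past the $A$-block), Lemma \ref{lem-eta1} gives injectivity on $\mathcal{V}_{\eta(1)}$, because the relevant $A$-weights carry at most two $\pm\frac{3}{2}$-entries. For type A swaps of coordinates with the same $\mu$-sign, the hypotheses of Propositions \ref{inj_v_x} and \ref{type_A_inj} are checked using the step-$2$ structure of the ascending strings $\alpha, \beta_1, \ldots, \beta_k$, exactly as in the proofs of Propositions \ref{single_mult} and \ref{two_string_mult}. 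For a short-root crossing, \eqref{phi_+-} gives injectivity provided the crossed $\nu$-coordinate is not $\pm\frac{3}{2}$.

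The construction of $\alpha$ and the $\beta_i$'s is designed precisely so that this last condition is automatic. Stripping off a maximal step-$2$ ascending string containing $-\frac{3}{2}$ at each stage guarantees that no coordinate of $\alpha$ equals $-\frac{3}{2}$, and, dually, that no coordinate of $\alpha^h$ equals $+\frac{3}{2}$. Hence every short-root crossing in $\omega$ (respectively $\omega^h$) that involves an $\alpha$-coordinate (respectively $\alpha^h$-coordinate) avoids the bad values, and is an isomorphism on $\mathcal{V}_{\eta(1)}$. The $\beta_i$-coordinates containing $\pm\frac{3}{2}$ never need to cross the short root externally because they are packaged into the type $B$ factor of $M$ and processed entirely inside $\zeta$.

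The main technical obstacle, as in Proposition \ref{two_string_mult}, is the combinatorial bookkeeping: one must exhibit a concrete sequence of elementary moves realizing $\omega$ and check at each step that the corresponding injectivity lemma applies. I would carry this out in the spirit of Steps 1--3 of Proposition \ref{two_string_mult}, peeling off the $\beta_i$'s one at a time into the $B$-block and then sorting $\alpha$ within the $A$-block, invoking Proposition \ref{non_red_of_phi} to pass between reduced and non-reduced expressions whenever it simplifies the argument. The Hermitian-dual chain is treated identically after exchanging the roles of $+\frac{3}{2}$ and $-\frac{3}{2}$.
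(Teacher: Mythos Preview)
Your proposal is correct and follows essentially the same strategy as the paper: factor the long intertwining operator through the Levi $M$, reduce to showing injectivity of the tail operator on the $\mathcal{V}_{\eta(1)}$-isotypic space, and verify this using the short-root formula \eqref{eq-tau} together with the type~$A$ lemmas. The key observation---that the construction of the $\beta_i$'s strips all $-\tfrac{3}{2}$'s out of $\alpha$, so every short-root crossing on an $\alpha$-coordinate is an isomorphism on $\mathcal{V}_{\eta(1)}$---is exactly what drives the paper's argument.

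Two minor points of comparison. First, the paper does not take $\zeta$ to be the full longest $A\times B$ operator; instead it stops at an intermediate point $\Sigma$ leaving the parameter in the convenient form $(\alpha^+;\beta_1^+,\dots,\beta_k^+)$ with all $\mu$-signs equal to~$+$, and then analyzes the remaining operator $\Delta$ via an explicit six-step decomposition $\iota_6\circ\cdots\circ\iota_1$. Your choice of the full $\zeta$ would also work but leaves the $\beta$-block in its $B$-antidominant form with mixed $\mu$-signs, making the subsequent bookkeeping heavier; the paper's intermediate stopping point is chosen precisely to keep the sign pattern simple. Second, your phrase ``the step-$2$ structure of the ascending strings $\alpha,\beta_1,\dots,\beta_k$'' is slightly off: $\alpha$ is in general \emph{not} a step-$2$ string (e.g.\ in Example~\ref{eg-typeb} one has $\alpha=(-\tfrac{7}{2},\tfrac{13}{2})$). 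In the paper's argument the step-$2$ blocks fed into Propositions~\ref{inj_v_x} and~\ref{type_A_inj} are always the $\beta_i$'s, and the required inequalities ($\xi_\ell\neq\nu_1-2,\nu_p+2$) follow from the \emph{maximality} of each $\beta_i$ rather than from any structure on $\alpha$.
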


\begin{example} \label{eg-typeb}
    Let $(M_{1/2},N_{1/2}) = (\frac{13}{2}^+,\frac{5}{2}^+,\frac{1}{2}^+,\frac{-3}{2}^+,\frac{-7}{2}^+,\frac{-7}{2}^+,\frac{-11}{2}^+)$, then
    $$N^B_{1/2}=\beta_1 = (\frac{-11}{2},\frac{-7}{2},\frac{-3}{2},\frac{1}{2},\frac{5}{2}); \quad N^A_{1/2} = (\frac{-7}{2},\frac{13}{2}).$$
\end{example}

\begin{proof}
As of the notations in Section \ref{sec-nonunit}, let $(\theta,\phi)$ and $(\theta_{\beta},\phi_{\beta})$ be the dominant form of $(M_{1/2},N_{1/2})$ and $(M_{1/2}^B,N_{1/2}^B)$. Also, let $(\theta_{\alpha},\phi_{\alpha})$ be the type A dominant form of $(M_{1/2}^A,N_{1/2}^A)$ (similar notations are respectively defined for their Hermitian duals). As in Section \ref{sec-u} and Section \ref{sec-v}, one needs to show the injectivity of $\omega$ (resp. $\omega^h$ ) over the image of $\zeta$ (resp. $\zeta^h$): 
\begin{align*}
\left((\theta_{\alpha},\phi_{\alpha});(\theta_{\beta},\phi_{\beta})\right) \xrightarrow{\zeta} & 
\left((w_A\theta_{\alpha},w_A\phi_{\alpha}); (w_B\theta_{\beta}, w_B\phi_{\beta})\right)
\xrightarrow{\omega}
(w_0\theta,w_0\phi),\\ 
\left((\theta_{\alpha}^h,\phi_{\alpha}^h);(\theta_{\beta}^h,\phi_{\beta}^h) \right)\xrightarrow{\zeta^h} & 
\left((w_A\theta_{\alpha}^h,w_A\phi_{\alpha}^h); (w_B\theta_{\beta}^h, w_B\phi_{\beta}^h)\right)
\xrightarrow{\omega^h}
(w_0\theta^h,w_0\phi^h)
\end{align*}
where $w_A$ and $w_B$ are the longest Weyl elements of type A and B respectively. 

As in the proof of Proposition \ref{two_string_mult}, we study the following operators instead of $\omega \circ \beta$:
\[
\left((\theta_{\alpha},\phi_{\alpha});(\theta_{\beta},\phi_{\beta})\right)
\xrightarrow{\Sigma}  
\left(\alpha^+; (\beta_1^+,\dots,\beta_k^+)\right)
\xrightarrow{\Delta}
(w_0\theta,w_0\phi)\]
where $\bullet^{\delta}$ is a $(\mu,\nu)$-parameter with $\mu = (\frac{\delta}{2},\dots,\frac{\delta}{2})$ and $\nu = \bullet$.
One can check that $\Sigma$ is part of the intertwining operator $\zeta = \iota_{(w_A,w_B)}$, and $\mathrm{im}(\Sigma)$ lies in the induced module $$\mathrm{Ind}_{M'}^G\left(J_A(\alpha^+) \otimes J_A({\beta}_1^+) \otimes \dots \otimes J_A({\beta}_k^+)\right).$$
Therefore, it suffices to show that $\Delta$ is injective over the above induced module on the $\mathcal{V}_{\eta(1)}$-isotropic subspaces.

There are two types of $\alpha^+$'s:
\begin{align*}
\overline{\alpha}^+ & = (W^+,\dots,W^+,(W+2)^+,\dots,(W+2)^+,\dots),\quad (W \geq  \frac{1}{2});\\ \underline{\alpha}^+ & = (\dots, (w-2)^+,\dots,(w-2)^+,w^+,\dots,w^+),\quad (w \leq  \frac{-7}{2}) \end{align*}
so $\alpha^+=(\underline{\alpha}^+,\overline{\alpha}^+)$. Also, for later purposes, we define $\widetilde{\beta}_i^{\pm}$ by the following: if $\beta_i := ((\frac{1}{2} - 2y_i), \dots, \frac{-3}{2}, \frac{1}{2}, \dots, (2x_i - \frac{3}{2}))$, then 
$$\widetilde{\beta}_i^+ := \left((\frac{1}{2} - 2y_i)^+, \dots, \frac{-3}{2}^+\right)\quad \quad
\widetilde{\beta}_i^- := \left((\frac{3}{2} - 2x_i)^-, \dots, \frac{-1}{2}^-\right).$$

Split $\Delta$ into the following:
\begin{align*}
(\underline{\alpha}^+)( \overline{\alpha}^+)(\beta_1^+,\dots,\beta_k^+)  
&\xrightarrow{\iota_1}
(\underline{\alpha}^+)(\beta_1^+,\dots,\beta_k^+)(\overline{\alpha}^+)\\ 
&\xrightarrow{\iota_2}
(\underline{\alpha}^+)(\beta_1^+,\dots,\beta_k^+)(-w_A\overline{\alpha}^-)\\
&\xrightarrow{\iota_3}
(\underline{\alpha}^+) (-w_A\overline{\alpha}^-) (\beta_1^+,\dots,\beta_k^+)\\
&\xrightarrow{\iota_4}
(\underline{\alpha}^+) (-w_A\overline{\alpha}^-) (\widetilde{\beta}_1^+,\dots,\widetilde{\beta}_k^+)(\widetilde{\beta}_1^-,\dots,\widetilde{\beta}_k^-)\\
&\xrightarrow{\iota_5}
z\left((\underline{\alpha}^+)(-w_A\overline{\alpha}^+)\right)(\widetilde{\beta}_1^+,\dots,\widetilde{\beta}_k^+)(\widetilde{\beta}_1^-,\dots,\widetilde{\beta}_k^-)\\
&\xrightarrow{\iota_6}
(w_0\theta,w_0\phi)
\end{align*}
where $w_A$ is the long Weyl group element of type A, so that $-w_A\overline{\alpha}$ is of {\bf descending} order. 

For $\iota_1$, it can be decomposed as the product of the intertwining operators 
\[(x^+)\left((\frac{1}{2}-2y_i)^+,\dots,(2x_i-\frac{3}{2})^+\right)\to \left((\frac{1}{2}-2y_i)^+,\dots,(2x_i-\frac{3}{2})^+\right)(x^+) \]
for $x\in \overline{\alpha}$. By definition of $\beta_i$'s, $x\neq (\frac{1}{2}-2y_i)-2, (2x_i-\frac{3}{2})+2$, so the intertwining operators are well defined and injective by Proposition \ref{inj_v_x}. 

For $\iota_2$, it can be written as composition of the intertwining operators $\tau$ and 
 intertwining operators given in Lemma \ref{lem-eta1} which are injective on $\mathcal{V}_{\eta(1)}$. Since $\overline{\alpha}$ contains no $\frac{3}{2}$, $\tau$ is injective on $\mathcal{V}_{\eta(1)}$ by \eqref{eq-tau}.  

For $\iota_3$ and $\iota_5$, they are some intertwining operators given in (b) of Lemma \ref{int_of_1/2_-1/2}. Similar  reason to $\iota_2$ implies they are injective on $\mathcal{V}_{\eta(1)}$. 

For $\iota_4$, omitting ``$(\underline{\alpha}^+) (-w_A\overline{\alpha}^-)$", write it as the composition of
\begin{align*} 
\bigcup_{i} \left((\frac{1}{2}-2y_i)^+,\dots,(2x_i-\frac{3}{2})^+\right)  
\to & \bigcup_i \left((\frac{1}{2}-2y_i)^+,\dots,\frac{-3}{2}^+\right)  \cup \bigcup_{i}\left(\frac{1}{2}^+,\dots,(2x_i-\frac{3}{2})^+\right)\\
\to & \bigcup_i \left((\frac{1}{2}-2y_i)^+,\dots,\frac{-3}{2}^+\right)  \cup \bigcup_i \left((\frac{3}{2}-2x_i)^-,\dots,\frac{-1}{2}^-\right).
\end{align*}
Then the first $\to$ is well defined and injective by Proposition \ref{inj_v_x}. The second $\to$ are some intertwining operators given in Lemma \ref{lem-eta1}, which are injective on $\mathcal{V}_{\eta(1)}$. So is $\iota_4$.  

For $\iota_6$, its injectivity is due to Proposition \ref{type_A_inj} as usual. 

\medskip
As for $\omega^h \circ \beta^h$, consider
\[
\left((\theta_{\alpha}^h,\phi_{\alpha}^h);(\theta_{\beta}^h,\phi_{\beta}^h)\right)
\xrightarrow{\Sigma^h}  
\left((\alpha^h)^+; (\beta_1^-,\dots,\beta_k^-)\right)
\xrightarrow{\Delta^h}
(w_0\theta^h,w_0\phi^h)\]
It suffices to show the injectivity of $\Delta^h|_{\Sigma^h}$, which is completely similar to that of $\Delta|_{\Sigma}$ . Now the lemma follows. \end{proof}

\begin{example} \label{eg-balpha}
We continue with Example \ref{eg-typeb} to illustrate the proof of the Proposition \ref{Theta_1}. In this case,
one needs to study the injectivity of $\Delta$ and $\Delta^h$ given by:
\begin{align*}
      (\frac{-7}{2}^+)(\frac{13}{2}^+)(\frac{-11}{2}^+,\frac{-7}{2}^+,\frac{-3}{2}^+,\frac{1}{2}^+,\frac{5}{2}^+) &\xrightarrow{\Delta} (w_0\theta,w_0\phi)\\
      (\frac{-13}{2}^+)(\frac{7}{2}^+)(\frac{-11}{2}^-,\frac{-7}{2}^-,\frac{-3}{2}^-,\frac{1}{2}^-,\frac{5}{2}^-) &\xrightarrow{\Delta^h} (w_0\theta^h,w_0\phi^h)
      \end{align*}
We first study $\Delta$ - it consists of the following operators:
\begin{align*}
      (\frac{-7}{2}^+)(\frac{13}{2}^+)(\frac{-11}{2}^+,\frac{-7}{2}^+,\frac{-3}{2}^+,\frac{1}{2}^+,\frac{5}{2}^+) &\to (\frac{-7}{2}^+)(\frac{-11}{2}^+,\frac{-7}{2}^+,\frac{-3}{2}^+,\frac{1}{2}^+,\frac{5}{2}^+)(\frac{13}{2}^+) \\
      &\to (\frac{-7}{2}^+)(\frac{-11}{2}^+,\frac{-7}{2}^+,\frac{-3}{2}^+,\frac{1}{2}^+,\frac{5}{2}^+)(\frac{-13}{2}^-) \\
       &\to (\frac{-7}{2}^+)(\frac{-13}{2}^-)(\frac{-11}{2}^+,\frac{-7}{2}^+,\frac{-3}{2}^+)(\frac{1}{2}^+,\frac{5}{2}^+) \\
      &\to (\frac{-13}{2}^-)(\frac{-7}{2}^+)(\frac{-11}{2}^+,\frac{-7}{2}^+,\frac{-3}{2}^+)(\frac{-5}{2}^-,\frac{-1}{2}^-) \\  
      &\to (\frac{-13}{2}^-,\frac{-11}{2}^+,\frac{-7}{2}^+,\frac{-7}{2}^+,\frac{-5}{2}^-,\frac{-3}{2}^+,\frac{-1}{2}^-)
\end{align*}
As for $\Delta^h$, we study the following operators:
{\allowdisplaybreaks \begin{align*}
    (\frac{-13}{2}^+)(\frac{7}{2}^+)(\frac{-11}{2}^-,\frac{-7}{2}^-,\frac{-3}{2}^-,\frac{1}{2}^-,\frac{5}{2}^-) &\to  (\frac{-13}{2}^+)(\frac{-11}{2}^-,\frac{-7}{2}^-,\frac{-3}{2}^-,\frac{1}{2}^-,\frac{5}{2}^-)(\frac{7}{2}^+) \\
    &\to (\frac{-13}{2}^+)(\frac{-11}{2}^-,\frac{-7}{2}^-,\frac{-3}{2}^-,\frac{1}{2}^-,\frac{5}{2}^-) (\frac{-7}{2}^-) \\
    &\to (\frac{-13}{2}^+) (\frac{-7}{2}^-) (\frac{-11}{2}^-,\frac{-7}{2}^-,\frac{-3}{2}^-)( \frac{1}{2}^-,\frac{5}{2}^-)  \\
    &\to  (\frac{-13}{2}^+) (\frac{-7}{2}^-) (\frac{-11}{2}^-,\frac{-7}{2}^-,\frac{-3}{2}^-)(\frac{-5}{2}^+,\frac{-1}{2}^+)\\
    &\to  (\frac{-13}{2}^+,\frac{-11}{2}^-,\frac{-7}{2}^-,\frac{-7}{2}^-,\frac{-5}{2}^+,\frac{-3}{2}^-,\frac{-1}{2}^+).
\end{align*}}
All of the above operators are injective up to the $\mathcal{V}_{\eta(1)}$-isotropic spaces by \eqref{eq-tau}, Proposition \ref{inj_v_x}, Proposition \ref{type_A_inj} and Lemma \ref{lem-eta1}.
\end{example}

\begin{theorem}\label{key_reduction}
Let $\pi = J(M_{1/2}\cup M_{1/2}^h, N_{1/2} \cup N_{-1/2}^h)$ be as given in Equation \eqref{eq-bhalf}. Suppose
$\pi$ is unitary, then $N_{1/2}$ must be the union of sequences of the form
\begin{itemize}
    \item $\left(\dots, \frac{5}{2}, \frac{1}{2}, \frac{-3}{2}, \dots \right)$ or $\left( \frac{-3}{2}, \frac{-7}{2}, \dots \right)$ or $\left(\dots, \frac{1}{2}, \frac{-3}{2} \right)$, all appearing in $N^B_{1/2}$; or
    \item $\left(\dots, \frac{5}{2}, \frac{1}{2} \right)$, which
    appears in  $N^A_{1/2}$.
\end{itemize} 
Equivalently, 
\begin{equation}
\label{eq-bsigma}    
N_{1/2} = \begin{cases} \mathop{\bigcup}\limits_{i=1}^l (\frac{1}{2}+2(x_i-1), \dots, \frac{1}{2}, \frac{-3}{2}, \dots, \frac{1}{2} - 2y_i) \cup \mathop{\bigcup}\limits_{j=l+1}^k (\frac{-3}{2}, \dots, \frac{1}{2} - 2y_i),\ or\\
\mathop{\bigcup}\limits_{i=1}^l (\frac{1}{2}+2(x_i-1), \dots, \frac{1}{2}, \frac{-3}{2}, \dots, \frac{1}{2} - 2y_i) \cup \mathop{\bigcup}\limits_{j=l+1}^k (\frac{1}{2}+2(x_j-1), \dots, \frac{1}{2}) 
\end{cases}
 \end{equation}
for two descending chains of {\bf positive} integers $x_i \geq x_{i+1}$ and $y_i \geq y_{i+1}$. We will write
\begin{equation} \label{eq-sigmab}
\begin{pmatrix}
    x_1 & \dots & x_l & x_{l+1} & \dots & x_{k} \\
    y_1 & \dots & y_l & y_{l+1} & \dots & y_k
\end{pmatrix}
\end{equation}
with $x_i, y_i > 0$ for all $0 \leq i \leq l$, and $x_jy_j = 0$ for $l+1 \leq j \leq k$ for the $N_{1/2}$ parameter given \eqref{eq-bsigma}. 
\end{theorem}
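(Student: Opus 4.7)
The plan is to combine Proposition \ref{Theta_1} with the pseudo-spherical unitarity analysis of Section \ref{sec-pseudo}, mirroring the reduction performed in the type D case (Section \ref{sec-main}). Assume $\pi$ is unitary, so its invariant Hermitian form is positive definite on the first spin-relevant $K$-type $\mathcal{V}_{\eta(1)}$. By Proposition \ref{Theta_1}, the $\mathcal{V}_{\eta(1)}$-multiplicity of $\pi$ agrees with that of the induced module $I := \mathrm{Ind}_M^G(J_A \otimes J_B)$, so the form on $I$ is also positive definite on $\mathcal{V}_{\eta(1)}$. A bottom-layer argument of the type used in Section \ref{sec-bottom}, using that $\mathcal{V}_{\eta(1)}$ restricts to a bottom-layer $K \cap M$-type on the type A factor, then forces $J_A = J_A(M^A_{1/2}\cup M^A_{-1/2}, \alpha \cup -\alpha)$ to carry a positive-definite form on the $K$-type with highest weight $(\frac{3}{2}, \frac{1}{2},\dots,\frac{1}{2})$.

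By Theorem \ref{KL-thm} and the type A setup of Section \ref{sec-pseudo}, $J_A$ decomposes as an induction indexed by the maximal $2$-arithmetic-progression strings $\alpha_i = (a, a+2, \dots, a+2(m_i-1))$ of $\alpha$, each factor being a complementary series $\iota_i \otimes \iota_i^h$ of $\widetilde{GL}(2m_i,\mathbb{C})$ with parameter $t_i = a + m_i - 1$. Lemma \ref{lem-dirac} combined with Theorem \ref{thm-comp}, applied at the fundamental-weight-twisted $K$-type $(\frac{3}{2},\frac{1}{2},\dots,\frac{1}{2})$, rules out all $\alpha_i$ whose coordinates lie entirely below $-\frac{3}{2}$: such a string has $|t_i|\geq m_i + \frac{7}{2}$, well outside the positivity range. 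Consequently each $\alpha_i$ must have $a=\frac{1}{2}$, producing the ascending shape $(\frac{1}{2},\frac{5}{2},\dots,\frac{1}{2}+2(x_i-1))$ listed in the theorem.

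By the construction of $N^B_{1/2}$, each $\beta_i$ is a maximal $2$-arithmetic-progression containing $-\frac{3}{2}$, and hence automatically fits one of the three listed type B shapes depending on whether it extends only upward, only downward, or both sides of $-\frac{3}{2}$. To obtain the two alternative forms in \eqref{eq-bsigma}, I would observe that the simultaneous presence of an ascending $\alpha$-string $(\dots,\frac{5}{2},\frac{1}{2})$ and a purely-descending $\beta$-string $(-\frac{3}{2},\dots,\frac{1}{2}-2y)$ would, via the bottom-layer and intertwining-operator strategy of Section \ref{sec-nonunit}, produce an induced composite whose complementary-series parameter lies outside the Stein range, yielding an indefinite form at $\mathcal{V}_{\eta(1)}$. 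Hence these two types of strings cannot coexist, giving the stated disjunction.

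The main obstacle is the intertwining-operator analysis at $\mathcal{V}_{(3/2,1/2,\dots,1/2)}$ on the type A factor. Unlike the adjoint-twisted $K$-type that Lemma \ref{lem-dirac} directly detects, the fundamental-weight twist requires re-applying the sign-of-operator formulas from Section \ref{sec-inter} to a different weight vector, and then tracking the sign through the long intertwining operator by the methods of Section \ref{sec-in}; carrying out this computation in the required generality, in particular identifying the correct one-dimensional $U(\alpha)$-submodules of $\mathcal{V}_{(3/2,1/2,\dots,1/2)}$ and establishing injectivity over them in the spirit of Proposition \ref{inj_v_x} and Proposition \ref{type_A_inj}, is the main technical step.
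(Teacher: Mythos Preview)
Your overall strategy --- use Proposition~\ref{Theta_1} to transfer the question to the type~$A$ factor $J_A$, then apply the pseudo-spherical classification of Section~\ref{sec-pseudo} --- is exactly the paper's approach. The gap is in the choice of $K\cap M$-type on the type~$A$ side. You work with the fundamental-twist $(\tfrac{3}{2},\tfrac{1}{2},\dots,\tfrac{1}{2})$, which forces you to redo the sign analysis of Section~\ref{sec-in} from scratch (your ``main obstacle''). The paper instead uses the \emph{adjoint}-twisted type $(\tfrac{3}{2},\tfrac{1}{2},\dots,\tfrac{1}{2},-\tfrac{1}{2})$ on $J_A$. This is also a $K\cap M$-constituent of $\mathcal{V}_{\eta(1)}|_{K\cap M}$ (it is $\eta(1)-e_m$ in type~$B$), and for it Lemma~\ref{lem-dirac} and Theorem~\ref{thm-comp} apply verbatim: if the string decomposition of $\alpha$ produces either a string with a gap, or a string lying entirely in $\{\dots,-\tfrac{11}{2},-\tfrac{7}{2}\}$, or a string with bottom $\geq \tfrac{5}{2}$, then $J_A$ is indefinite at that adjoint-twisted type, and Proposition~\ref{Theta_1} plus induction in stages push the indefiniteness to $\mathcal{V}_{\eta(1)}$ in $\pi$. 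No new intertwining computation is needed.

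Your argument for the disjunction in \eqref{eq-bsigma} is also misdirected. It does not require any further unitarity input: it is a direct consequence of the greedy construction of the $\beta_i$ in Proposition~\ref{Theta_1} together with the conclusion (already established) that every $\alpha$-string has bottom $\tfrac{1}{2}$. Indeed, each $\beta_j$ is the longest $2$-progression containing $-\tfrac{3}{2}$ in what remains of $N_{1/2}$ at step~$j$; if some $\beta_j$ fails to contain $\tfrac{1}{2}$, then no $\tfrac{1}{2}$ was available at step~$j$, hence none survives to $\alpha$, so $\alpha$ is empty. Conversely, if $\alpha$ is nonempty then every $\beta_j$ contains $\tfrac{1}{2}$, i.e.\ has $x_j\geq 1$. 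This is precisely the dichotomy recorded in \eqref{eq-bsigma}.
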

\begin{proof}
Suppose on the contrary that $N^{-1/2}$ does not satisfy the hypothesis. Let 
$N^A_{1/2}$ as Proposition \ref{Theta_1}. Decompose $N^A_{1/2}$ as the union of descend subsequences as the way decomposing ``$\nu_1$" in subsection \ref{sec-pseudo}. By hypothesis, $N^A_{1/2}$ either contains a descend subsequence $(\dots,s,t,\cdots)$ with $t-s>2$, or contains a descend subsequence $(w, w-2, \dots)$ with $w \leq -\frac{7}{2}$, or 
$(\dots,w-2,w)$ with $w \geq \frac{5}{2}$.
By Lemma \ref{lem-dirac} and Theorem \ref{thm-comp}, the module
$J_A(M^A_{1/2}\cup M^A_{-1/2} , N^A_{1/2} \cup N^A_{-1/2})$ appearing in the induced module in Proposition \ref{Theta_1} must have indefinite form on the $K$-type with highest weight $(\frac{3}{2},\frac{1}{2},\dots,\frac{1}{2},-\frac{1}{2})$. By Proposition \ref{Theta_1} and induction in stages, $\pi$ has indefinite form
on $\mathcal{V}_{\eta(1)}$.
\end{proof}

\subsection{Main theorem}
\begin{theorem}\label{Main_odd}
Let $\pi = J(M_{1/2}\cup M_{1/2}^h, N_{1/2}\cup N_{1/2}^h)$ be as in Equation \eqref{eq-bhalf} and $N_{1/2}$ is given by Equation \eqref{eq-sigmab}. Then $\pi$ is unitary if and only if $N_{1/2}$ satisfies
\begin{equation}\label{condition_odd}
y_i+1 \geq x_i \quad \quad \text{and}\quad \quad x_{i}  \geq y_{i+1}
\end{equation}
for all $i$. Otherwise, its invariant Hermitian form is indefinite at some spin-relevant $K$-types. In particular, the parameters must be of the form 
\[\begin{pmatrix}
    x_1 & \dots & x_{l} & 0  \\
    y_1 & \dots & y_{l} & y_{l+1}
\end{pmatrix} \quad \quad \text{or} \quad \quad
\begin{pmatrix}
    x_1 & \dots & x_l & 1 & \dots & 1 \\
    y_1 & \dots & y_l & 0 & \dots & 0
\end{pmatrix}.\]
\end{theorem}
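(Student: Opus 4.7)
The plan is to mirror the structure of the type D proof laid out in Sections \ref{sec-unitary}--\ref{sec-nonunit}, using Theorem \ref{key_reduction} as the starting point so that only the parameters $N_{1/2}$ of the form \eqref{eq-sigmab} need to be analyzed. The argument splits into a unitarity half (when \eqref{condition_odd} holds) and a non-unitarity half (when \eqref{condition_odd} fails).

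For unitarity, I would prove the type B analog of Theorem \ref{thm-bv} by induction on the number of equalities in \eqref{condition_odd}. The base case, where all inequalities are strict, coincides with the genuine unipotent representations of $Spin(2n+1,\mathbb{C})$ constructed by Barbasch in \cite[Section 7]{B17} (see Remark \ref{rmk-unipb}), which are known to be unitary. If there is an equality $y_i + 1 = x_i$ (within a column) or $x_i = y_{i+1}$ (between consecutive columns), one peels off a Stein complementary series $\mathrm{Ind}_{A_{\ell-1} \times A_{\ell-1}}^{A_{2\ell-1}}\!\bigl(\mathrm{comp}_{1/2}(\ell,\tfrac12) \otimes \mathrm{comp}_{1/2}(\ell,\tfrac12)^h\bigr)$ from the front of a string in $N_{1/2}$ (exactly as in the proof of Theorem \ref{thm-bv}), reducing to a smaller parameter with one fewer equality. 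Induction in stages then delivers the unitary structure.

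For non-unitarity, I would establish the type B analogs of Corollary \ref{cor-u} and Corollary \ref{cor-v}, then propagate these through an induction-by-complementary-series argument as in Section \ref{sec-comp}. The base cases are: (I) $N_{1/2} \longleftrightarrow \begin{pmatrix} x_1 \\ y_1 \end{pmatrix}$ with $y_1 + 1 < x_1$, and (II) $N_{1/2} \longleftrightarrow \begin{pmatrix} x_1 & x_2 \\ y_1 & y_2 \end{pmatrix}$ with $x_1 < y_2$ (the two ways \eqref{condition_odd} can fail). In each case I would decompose the long intertwining operator as $\iota_{w_0} = \omega \circ \zeta \circ \kappa$ in parallel with Propositions \ref{single_mult} and \ref{two_string_mult}, where $\zeta$ is the Levi-long operator producing the induced module
\[
\mathrm{Ind}_{A_{2n}}^{B_{2n}}\!\bigl(J_{A_{2n}}(M_{1/2} \cup M_{1/2}^h,\ N_{1/2}\cup N_{1/2}^h)\bigr) \quad \text{or} \quad \mathrm{Ind}_{A_{2(d+e)-1}\times B_{2(c+f)}}^{B_{2n}}(\cdots),
\]
and show that $\omega$ (together with its Hermitian dual $\omega^h$) is injective on all spin-relevant $K$-type isotypic subspaces. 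The injectivity reduces, via Proposition \ref{non_red_of_phi}, to a product of type A operators handled by Proposition \ref{inj_v_x}, Proposition \ref{type_A_inj}, and Lemma \ref{lem-eta1}, plus the short-root operator $\tau$ of \eqref{eq-tau} which is injective on $\mathcal{V}_{\eta(i)}$ except when it acts on a $\pm\frac{3}{2}$ coordinate. The multiplicity-matching conclusion then transfers the indefiniteness of the type A Hermitian form (at a suitable $K$-type of highest weight $(\frac12,\dots,\frac12)+(1^q,0^*,(-1)^q)$, by Theorem \ref{thm-comp}(b)) to an indefiniteness of the full Hermitian form on some $\mathcal{V}_{\eta(i)}$. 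The general case is then handled by the three-step reduction of Section \ref{sec-nonunit} (inducing Stein complementary series to realign the parameters into the base-case shape), whose irreducibility-free version applies here because we only need the multiplicity equality for the spin-relevant $K$-types, not full irreducibility.

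The main obstacle I expect is bookkeeping around the short root: the operator $\tau$ in \eqref{eq-tau} introduces a new type of factor in every decomposition of $\omega$ and $\omega^h$ that does not appear in type D, and one must track precisely when a $\pm\frac{3}{2}$ coordinate is produced or moved across, since $\tau$ can fail to be injective on $\mathcal{V}_{\eta(i)}$ at exactly those values. This is where Proposition \ref{Theta_1} and Theorem \ref{key_reduction} become essential, as they already encode which sign flips can occur without affecting $\mathcal{V}_{\eta(1)}$; extending this control uniformly to all $\mathcal{V}_{\eta(i)}$, and coordinating it with the long type A intertwining operators in the decomposition, is the delicate part. Once the analogs of Propositions \ref{single_mult} and \ref{two_string_mult} are in hand, the remaining steps (Section \ref{sec-comp} and the final three-step reduction) translate almost verbatim from type D.
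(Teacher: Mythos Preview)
Your proposal is essentially correct and follows the same architecture as the paper's proof: Theorem \ref{key_reduction} reduces to the shape \eqref{eq-sigmab}; unitarity is obtained exactly as you describe (and as in Remark \ref{rmk-unipb}); and non-unitarity proceeds through two base cases (Propositions \ref{U_case_odd} and \ref{V_case_odd}) followed by the complementary-series induction of Proposition \ref{prop-nonunitb}. One correction: in your Base Case II the Levi must be $A_{2(c+f)-1}\times B_{2(d+e)}$, with the violating pair $(x_1,y_2)=(c,f)$ placed in the type $A$ factor (so that Theorem \ref{thm-comp}(b) yields indefiniteness there), not $A_{2(d+e)-1}\times B_{2(c+f)}$ as you wrote; you have copied the type D Levi without swapping which diagonal carries the bad pair. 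Also, Lemma \ref{lem-eta1} is only used in the paper for the $\eta(1)$-reduction (Proposition \ref{Theta_1}); the base-case injectivity on all $\mathcal{V}_{\eta(i)}$ is obtained directly from \eqref{eq-tau}, Proposition \ref{inj_v_x}, and Proposition \ref{type_A_inj} by choosing the decomposition of $\omega$, $\omega^h$ so that no $\pm\tfrac{3}{2}$ hits $\tau$ and the numerical hypotheses of those propositions hold, with a final reduction to small cases checked by \texttt{atlas}.
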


\begin{remark} \label{rmk-unipb}
Note that if the equality holds, then they give parameters for the Stein complementary series. Consequently, 
the arguments in Section \ref{sec-unitary} allows us to reduce our unitarity proof
to considering \eqref{condition_odd} with strict inequalities, whose $N_{1/2}$-parameter must be of the form:
\[\begin{pmatrix}
    x_1 & \dots & x_l & x_{l+1} \\
    y_1 & \dots & y_l & y_{l+1}
\end{pmatrix}, \quad y_{i} \geq x_i > y_{i+1} .\]
These representations are studied in \cite[Section 7.2]{B17}, which are unipotent representations corresponding to the orbit $\mathcal{B} = \bigcup_{i=1}^{l+1} (2y_i+1, 2y_i, 2x_i, 2x_i-1),$ where $(2x_{l+1},2x_{l+1}-1) = (0,0)$ if $x_{l+1} = 0$.
The unitarity of these modules is stated in \cite{B17}, and the proof is identical to that of Brega representations in type D.
\end{remark}
 The non-unitarity of the Hermitian module which doesn't satisfy the condition \eqref{condition_odd} is proved similarly as the type D case. We first deal with two fundamental cases (``Case I" and ``Case II"), and then reduce the general case to one of the two cases. We will only give details on when the proofs are different from the Type D case.

\subsection{Base Case I}
\begin{proposition}\label{U_case_odd}
Let $n = a+b$, and $N_{1/2} = \begin{pmatrix} a \\ b \end{pmatrix}$ with $a > b+1$. Then the module $\mathrm{Ind}_{A_{2n-1}}^{B_{2n}} \big(J_A(M_{1/2} \cup M_{1/2}^h, N_{1/2} \cup N_{1/2}^h)\big)$ and $J_{B_{2n}}(M_{1/2} \cup M_{1/2}^h, N_{1/2} \cup N_{1/2}^h)$ have the same multiplicity of the $K$-type $\mathcal{V}_{\eta(i)}$ for all $0\leq i\leq n$. 
\end{proposition}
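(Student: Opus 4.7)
The plan is to adapt the three-stage decomposition used in the proof of Proposition \ref{single_mult}. I would factor the long intertwining operator on $X(\theta,\phi)$ as
\[X(\theta,\phi) \xrightarrow{\kappa} X(\text{type A dominant}) \xrightarrow{\zeta} X(\text{type A antidominant}) \xrightarrow{\omega} X(w_0\theta,w_0\phi),\]
and similarly for the Hermitian dual parameters $(\theta^h,\phi^h)$. The middle operator $\zeta$ is the longest type $A_{2n-1}$ intertwining operator, so that $\mathrm{im}(\zeta)$ is exactly the induced module $\mathrm{Ind}_{A_{2n-1}}^{B_{2n}}(J_A(M_{1/2}\cup M_{1/2}^h, N_{1/2}\cup N_{1/2}^h))$. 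Since $\kappa$ is the Hermitian adjoint of $\omega^h$ (and $\kappa^h$ of $\omega$), the task reduces to showing that $\omega|_{\mathrm{im}(\zeta)}$ and $\omega^h|_{\mathrm{im}(\zeta^h)}$ are injective on each $\mathcal{V}_{\eta(i)}$-isotropic subspace for $0 \leq i \leq n$.

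Next, I would decompose $\omega$ (and, symmetrically, $\omega^h$) as an alternating composition of short root reflections $\tau$ of the form \eqref{eq-tau} and type A intertwining operators of the form handled by Propositions \ref{inj_v_x} and \ref{type_A_inj}. Concretely, I would flip the sign of the rightmost coordinate by applying $\tau$, then slide the flipped coordinate leftward using a sequence of type A operators, and iterate; along the way, Proposition \ref{non_red_of_phi} justifies the resulting (possibly non-reduced) expression on $\mathrm{im}(\zeta)$. Injectivity of the type A factors on spin-relevant $K$-types will follow from Propositions \ref{inj_v_x} and \ref{type_A_inj} once the differences and sums of $\nu$-coordinates are checked to avoid the forbidden values $\pm 2, \pm 3$. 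Injectivity of each $\tau$ factor will follow from \eqref{eq-tau} as soon as the coordinate it flips is not $\pm\frac{3}{2}$.

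The hard part will be arranging the decomposition of $\omega$ so that no short root reflection ever lands on a coordinate of value $\pm\frac{3}{2}$ and so that every type A factor satisfies the non-collision hypotheses of Propositions \ref{inj_v_x} and \ref{type_A_inj}. This is where the asymmetric hypothesis $a > b+1$ enters: the positive chain $(\frac{1}{2},\frac{5}{2},\ldots,2a-\frac{3}{2})$ in $N_{1/2}$ is strictly longer than the negative chain $(-\frac{3}{2},\ldots,\frac{1}{2}-2b)$ by at least two, giving enough room to flip signs from the long end first while keeping $-\frac{3}{2}$ safely embedded in the interior of the sequence until it can be absorbed by a type A reflection rather than a $\tau$. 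As in the proof of Proposition \ref{single_mult}, a parity case analysis on $a$ and $b$ will be needed to track the correct signs $\frac{\epsilon(\cdot)}{2}^{\epsilon(\cdot)}$, but these amount only to swapping some $\mathfrak{gl}$ intertwiners for their $\overline{\mathfrak{gl}}$ counterparts. Once the combinatorial placement of $\tau$'s is verified, the equality of multiplicities of $\mathcal{V}_{\eta(i)}$ follows immediately, completing the proof.
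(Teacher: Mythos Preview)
Your overall strategy---factoring through the long type $A$ intertwiner $\zeta$ and reducing to injectivity of $\omega$ and $\omega^h$ on $\mathrm{im}(\zeta)$---is correct and matches the paper. The gap is in your claim that ``arranging the decomposition of $\omega$ so that no short root reflection ever lands on a coordinate of value $\pm\frac{3}{2}$ and so that every type A factor satisfies the non-collision hypotheses'' is possible. It is not. The paper's proof applies Propositions \ref{inj_v_x} and \ref{type_A_inj} and the short-root operator \eqref{eq-tau} as far as they go, and shows that one is inevitably left with residual rank-two operators that these lemmas cannot handle: for $\omega$ the residue is $\left(\frac{1}{2}^+,\frac{5}{2}^+\right)\to\left(\frac{-5}{2}^-,\frac{-1}{2}^-\right)$ (the $\begin{pmatrix}2\\0\end{pmatrix}$ case), and for $\omega^h$ it is $\left(\frac{-1}{2}^+,\frac{3}{2}^+\right)\to\left(\frac{-3}{2}^-,\frac{-1}{2}^+\right)$ (the $\begin{pmatrix}1\\1\end{pmatrix}$ case). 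Try the first one: applying $\tau$ then swapping gives a type A step with $\nu$-difference $3$ and opposite signs, hence zero by \eqref{phi_+-}; swapping first gives $\langle\check\alpha,\nu\rangle=-2$, hence an undefined denominator in \eqref{phi_++}. For the second, any route requires $\tau$ on a coordinate $\frac{3}{2}$, which has kernel by \eqref{eq-tau}. The hypothesis $a>b+1$ does not rescue this; it only ensures the large-scale reduction steps go through. The paper resolves these base cases by direct \texttt{atlas} computation (see Appendix A), and your proof needs the same ingredient.

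Two minor points. First, the paper works on each $B_n$ factor separately via \eqref{eq-bhalf} rather than on the combined $B_{2n}$ parameter; your direct approach is fine but you should be explicit about it. Second, your remark about ``$\overline{\mathfrak{gl}}$ counterparts'' is a type D artifact (Remark \ref{similar_of_int_lemma}); in type B there is a unique $A_{n-1}$ Levi, so no such parity swap arises.
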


\begin{proof}
As before, the proposition holds more generally for $a \geq b$ which we will prove below. As in Equation \eqref{eq-lema1} and \eqref{eq-lema12}, consider the intertwining operators:
\begin{align*}
\left((\frac{1}{2}-2b)^+, \dots, \frac{-3}{2}^+, \frac{1}{2}^+, \frac{5}{2}^+, \dots, (2a-\frac{3}{2})^+\right) 
&\xrightarrow{\omega}  (w_0\theta,w_0\phi) \\
\left((\frac{3}{2}-2a)^+, \dots, \frac{-5}{2}^+, \frac{-1}{2}^+, \frac{3}{2}^+, \dots, (2b-\frac{1}{2})^+\right) 
&\xrightarrow{\omega^h}  (w_0\theta^h,w_0\phi^h),\end{align*}
and show that they are injective over $\mathrm{Ind}_{A_{n-1}}^{B_{n}} \big(J_A(M_{1/2}, N_{1/2})\big)$ and $\mathrm{Ind}_{A_{n-1}}^{B_{n}} \big(J_A(M_{1/2}^h, N_{1/2}^h)\big)$ over the $\mathcal{V}_{\eta(i)}$-isotropic subspaces for all $ 0\leq i\leq n$.

We now study the injectivity of $\omega$. We split $\omega$ into the following operators:
\begin{align*}
\left((\frac{1}{2}-2b)^+, \dots, (2a-\frac{3}{2})^+\right) 
\to &\left((\frac{1}{2}-2b)^+, \dots, \frac{1}{2}^+,\frac{5}{2}^+\right) \left((\frac{3}{2}-2a)^-,\dots,\frac{-9}{2}^-\right) \\
\to & \left((\frac{1}{2}-2b)^+, \dots, \frac{-3}{2}^+\right) \left((\frac{3}{2}-2a)^-,\dots,\frac{-9}{2}^-\right) \left(\frac{1}{2}^+,\frac{5}{2}^+\right)\\
\to & \left((\frac{1}{2}-2b)^+, \dots, \frac{-3}{2}^+\right) \left((\frac{3}{2}-2a)^-,\dots,\frac{-9}{2}^-,\frac{-5}{2}^+,\frac{-1}{2}^-\right) \\
\to &(w_0\theta,w_0\phi),
\end{align*}
where the first $\to$ is a composition of $\tau$ and type A intertwining operators, all of which are injective by \eqref{eq-tau} and Proposition \ref{inj_v_x}. And the second and last $\to$ are injective by Proposition \ref{inj_v_x} and Proposition \ref{type_A_inj} respectively. So one is reduced to the case of 
$$\left(\frac{1}{2}^+,\frac{5}{2}^+\right) \longrightarrow \left(\frac{-5}{2}^-,\frac{-1}{2}^-\right),$$
that is, the case when $\begin{pmatrix} a \\ b
\end{pmatrix} = \begin{pmatrix} 2 \\ 0
\end{pmatrix}$.

As for $\omega^h$. Consider the following operators:
{\allowdisplaybreaks
\begin{align*}
&\ \ \left((\frac{3}{2}-2a)^+, \dots, (2b-\frac{1}{2})^+\right) \\
\to &\left((\frac{3}{2}-2a)^+, \dots, (2b-\frac{5}{2})^+\right)\left((\frac{1}{2}-2b)^-\right) \\
\to & \left((\frac{3}{2}-2a)^+, \dots, (\frac{-1}{2}-2b)^+,(\frac{1}{2}-2b)^-, (\frac{3}{2}-2b)^+, \dots, (2b-\frac{5}{2})^+\right) \\
\to & \dots \\
\to & \left((\frac{3}{2}-2a)^+, \dots, (\frac{-1}{2}-2b)^+,(\frac{1}{2}-2b)^-, (\frac{3}{2}-2b)^+, (\frac{5}{2}-2b)^- \dots, \frac{-1}{2}^+,\frac{3}{2}^+ \right) \\
\to & \left((\frac{3}{2}-2a)^+, \dots, (\frac{-1}{2}-2b)^+,(\frac{1}{2}-2b)^-, (\frac{3}{2}-2b)^+, (\frac{5}{2}-2b)^- \dots, \frac{-3}{2}^-,\frac{-1}{2}^+ \right) \\
= &(w_0\theta,w_0\phi),
\end{align*}}
As before, one applies \eqref{eq-tau} and Proposition \ref{type_A_inj} interchangeably to conclude that all $\to$ are injective except the last one:
$$\left(\frac{-1}{2}^+,\frac{3}{2}^+ \right) \longrightarrow \left(\frac{-3}{2}^-,\frac{-1}{2}^+ \right) $$
which is the case when $\begin{pmatrix} a \\ b
\end{pmatrix} = \begin{pmatrix} 1 \\ 1
\end{pmatrix}$.

In conclusion, one is reduced to proving the proposition when $\begin{pmatrix} a \\ b
\end{pmatrix} = \begin{pmatrix} 2 \\ 0
\end{pmatrix}$ or $\begin{pmatrix} 1 \\ 1
\end{pmatrix}$. Both of them can be checked by \texttt{atlas} as outlined in Appendix A.
\end{proof}

As in Corollary \ref{cor-u}, we have the following:
\begin{corollary} \label{cor-bu}
Let $\pi = J(M_{1/2} \cup M_{1/2}^h,N_{1/2} \cup N_{1/2}^h)$, where $N_{1/2}$ is as given in Proposition \ref{U_case_odd}. Then $\pi$ has indefinite signature at $\mathcal{V}_{\eta(2b+2)}$.
\end{corollary}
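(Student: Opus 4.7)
The plan is to mirror the argument for Corollary \ref{cor-u} in the Type D case. First, I would apply Proposition \ref{U_case_odd} to realize $\pi$ as a subquotient of the induced module
\[
\mathrm{Ind}_{A_{2n-1}}^{B_{2n}}\bigl(J_A(M_{1/2} \cup M_{1/2}^h, N_{1/2} \cup N_{1/2}^h)\bigr),
\]
with coinciding multiplicities on $\mathcal{V}_{\eta(i)}$ for every $0 \leq i \leq n$. It therefore suffices to exhibit an indefinite Hermitian signature on the $\mathcal{V}_{\eta(2b+2)}$-isotypic component of this induced module.

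Next, I would identify the inducing module. By \eqref{eq-complementary} and the parameter $N_{1/2}\longleftrightarrow \begin{pmatrix} a \\ b \end{pmatrix}$ with $a > b$, we have
\[
J_A(M_{1/2} \cup M_{1/2}^h, N_{1/2} \cup N_{1/2}^h) = \mathrm{Ind}_{A_{n-1} \times A_{n-1}}^{A_{2n-1}}\bigl(\mathrm{comp}_{1/2}(n, a-b-\tfrac{1}{2}) \otimes \mathrm{comp}_{1/2}(n, a-b-\tfrac{1}{2})^h\bigr),
\]
a twisted $t$-complementary series of $\widetilde{GL}(2n,\mathbb{C})$ with $t = a - b - \tfrac{1}{2}$. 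The hypothesis $a > b+1$ gives $(a-b-1) < |t| < (a-b)$, so Theorem \ref{thm-comp}(b) applies with $q = a - b - 1$: the Hermitian form is indefinite at the $\widetilde{U}(2n)$-type of highest weight
\[
(\tfrac{1}{2},\dots,\tfrac{1}{2}) + (\underbrace{1,\dots,1}_{2b+2}, 0, \dots, 0, \underbrace{-1,\dots,-1}_{2b+2}).
\]

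Finally, I would invoke \cite[Proposition 10.5]{V86}, exactly as in the proof of Corollary \ref{cor-u}, to transport this indefiniteness through the bottom-layer map: the indicated Type A weight corresponds precisely to the Type B bottom-layer $K$-type $\mathcal{V}_{\eta(2b+2)}$ of $Spin(4n+1,\mathbb{C})$, so the induced module has indefinite form on $\mathcal{V}_{\eta(2b+2)}$. Combined with the multiplicity coincidence from the first step, the same conclusion transfers to $\pi$. The condition $a \geq b+2$ ensures $2b+2 \leq a+b = n$, so $\eta(2b+2)$ is a legitimate spin-relevant $K$-type. I do not expect any essential obstacle; the argument is a direct transcription of the Type D proof, with the only bookkeeping being the verification that the bottom-layer map sends the $A_{2n-1}$ weight above to $\eta(2b+2)$ as defined in the Type B setup, which follows by inspecting the positive coordinates of the shifted weight.
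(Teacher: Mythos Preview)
Your proposal is correct and follows exactly the approach the paper intends: the paper simply writes ``As in Corollary \ref{cor-u}'' before stating Corollary \ref{cor-bu}, and your argument is the faithful Type B transcription of that proof, including the identification of the inducing module as the twisted complementary series with $t=a-b-\tfrac12$, the application of Theorem \ref{thm-comp}(b) with $q=a-b-1$ giving $n-q+1=2b+2$, and the transfer via \cite[Proposition 10.5]{V86}. Your range check $2b+2\leq n$ (needed because Proposition \ref{U_case_odd} only guarantees the multiplicity match for $0\leq i\leq n$) is also exactly right.
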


\subsection{Base Case II}
\begin{proposition}
    \label{V_case_odd}
Let $n = c+d+e+f$, and $N_{1/2} \longleftrightarrow\begin{pmatrix} c & d\\ e & f\end{pmatrix}$ with $f>c$. Let $A_{1/2} \longleftrightarrow\begin{pmatrix} c\\ f \end{pmatrix}$ and $D_{1/2} \longleftrightarrow\begin{pmatrix} d\\ e\end{pmatrix}$.
Then $J(M_{1/2}\cup M_{1/2}^h, N_{1/2}\cup N_{1/2}^h)$ is the lowest $K$-type subquotient of the induced representation:
\[\mathrm{Ind}_{A_{2(c+f)-1}\times B_{2(d+e)}}^G\big(J_A(A_{1/2} \cup A_{1/2}^h) \otimes J_B(B_{1/2} \cup B_{1/2}^h)\big),\]
both having the same multiplicity of $\mathcal{V}_{\eta(i)}$ for all $0\leq i\leq n$. 
\end{proposition}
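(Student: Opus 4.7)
The plan is to mirror the proof of Proposition \ref{two_string_mult} (the Case II type-$D$ analogue), adapting it to accommodate the extra short root of type $B$ through the operator $\tau$ from \eqref{eq-tau} and the type-$B$ injectivity result Lemma \ref{lem-eta1}. Realize $\pi$ as the image of the long intertwining operator $\iota_{w_0}$ acting on the principal series $X(\theta,\phi)$, and decompose
\[
\iota_{w_0}=\omega\circ\zeta\circ\kappa,
\]
where $\zeta$ is the longest Weyl element of the $A_{c+f-1}\times B_{d+e}$ Levi; decompose the Hermitian dual operator on $X(\theta^h,\phi^h)$ analogously as $\omega^h\circ\zeta^h\circ\kappa^h$. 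The image of $\zeta$ (resp.\ $\zeta^h$) is precisely the inducing module $\mathrm{Ind}_{A_{c+f-1}\times B_{d+e}}^{B_n}(J_A(A_{1/2})\otimes J_B(B_{1/2}))$ (resp.\ its Hermitian dual). Since $\kappa$ and $\omega^h$, as well as $\kappa^h$ and $\omega$, are pairwise Hermitian adjoints, it suffices to show that $\omega$ and $\omega^h$ are injective on the images of $\zeta$ and $\zeta^h$ respectively, at each spin-relevant isotypic space $\mathcal{V}_{\eta(i)}$.

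Paralleling Step 1 of the proof of Proposition \ref{two_string_mult}, I would replace $\omega\circ\zeta$ by an auxiliary composition $\Delta\circ\Sigma$, in which $\Sigma$ is a sub-operator of $\zeta$ and $\Delta$ first slides the good type-$B$ block past the bad type-$A$ block before executing the final long move. Decomposing $\Delta=\iota_3\circ\iota_2\circ\iota_1$ as in the type-$D$ case, each $\iota_j$ can be checked to be injective on $\mathcal{V}_{\eta(i)}$ by combining Proposition \ref{inj_v_x}, Proposition \ref{type_A_inj}, Lemma \ref{lem-eta1}, the non-reduced-expression tool Proposition \ref{non_red_of_phi}, and the short-root formula \eqref{eq-tau}; an entirely parallel decomposition handles $\omega^h\circ\zeta^h$. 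After these steps, the analogues of Steps 2 and 3 of the proof of Proposition \ref{two_string_mult} reduce the general $\begin{pmatrix} c & d \\ e & f \end{pmatrix}$ with $f>c$ to a short list of base cases (analogues of $\begin{pmatrix} 2 & 2 \\ 0 & 0 \end{pmatrix}$ and $\begin{pmatrix} 2 & 1 \\ 1 & 0 \end{pmatrix}$ from the type-$D$ proof), which I would verify by direct \texttt{atlas} computation in the appendix.

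The main obstacle will be controlling the short-root operator $\tau$. By \eqref{eq-tau}, $\tau:(a^\epsilon)\to(-a^{-\epsilon})$ is injective on $\mathcal{V}_{\eta(1)}$ only when $a\neq \pm\tfrac{3}{2}$, so every application of $\tau$ in the decomposition must be placed at a coordinate distinct from $\pm\tfrac{3}{2}$. The hypothesis $f>c$, together with the anti-dominance of the type-$A$ block $(\tfrac{1}{2}-2f,\dots,2c-\tfrac{3}{2})$ produced by $\zeta$, is exactly what permits this: the short-root crossings arising in $\iota_2$ can be routed through the endpoints of the $A$-block, and under $f>c$ these endpoints never equal $\pm\tfrac{3}{2}$. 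A symmetric argument, using Lemma \ref{lem-eta1} in place of Proposition \ref{inj_v_x} at the appropriate step, takes care of $\omega^h$.
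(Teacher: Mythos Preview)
Your overall plan is the same as the paper's: mirror Proposition~\ref{two_string_mult}, set up $\omega\circ\zeta\circ\kappa$ and $\omega^h\circ\zeta^h\circ\kappa^h$, reduce via Steps~1--3 to small base cases, and finish those by \texttt{atlas}. That part is fine.

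There is, however, a genuine gap in your toolkit. You repeatedly invoke Lemma~\ref{lem-eta1}, and in the last paragraph you say you will use it ``in place of Proposition~\ref{inj_v_x}'' for $\omega^h$. But Lemma~\ref{lem-eta1} only guarantees injectivity on the single isotypic space $\mathcal{V}_{\eta(1)}$; it was introduced for Proposition~\ref{Theta_1}, whose conclusion is about $\eta(1)$ alone. The present proposition asserts equality of multiplicities for \emph{all} $\mathcal{V}_{\eta(i)}$, $0\le i\le n$, so any step that relies on Lemma~\ref{lem-eta1} would leave $\eta(i)$ with $i\ge 2$ uncontrolled. (Relatedly, your reading of \eqref{eq-tau} is too narrow: $\tau$ is an isomorphism on \emph{all} spin-relevant $K$-types provided $a\neq\pm\tfrac{3}{2}$, not just on $\eta(1)$.) The paper's proof does not use Lemma~\ref{lem-eta1} here at all: the Step~1--3 reductions go through with Proposition~\ref{inj_v_x}, Proposition~\ref{type_A_inj}, Proposition~\ref{non_red_of_phi}, and the short-root formula \eqref{eq-tau}, each of which is valid on every $\mathcal{V}_{\eta(i)}$. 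So drop Lemma~\ref{lem-eta1} from the argument and verify directly that the numerical hypotheses of Propositions~\ref{inj_v_x} and~\ref{type_A_inj} are met at each stage (they are, under $f\ge c-1$).

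A second, smaller discrepancy: the reduction does not terminate at type-$B$ analogues of $\begin{pmatrix}2&2\\0&0\end{pmatrix}$ and $\begin{pmatrix}2&1\\1&0\end{pmatrix}$. Running Steps~2--3 in the type-$B$ setting actually lands on the simpler base parameters $\begin{pmatrix}1\\1\end{pmatrix}$ (for $\omega$) and $\begin{pmatrix}0&0\\1&1\end{pmatrix}$ (for $\omega^h$), both living in $Spin(5,\mathbb{C})$; these are the cases the paper checks by \texttt{atlas}.
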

\begin{proof}
We will prove the more general case when $f \geq c-1$. As in Proposition \ref{two_string_mult}, consider the intertwining operators:
\begin{align*}
\left((\frac{1}{2}-2f)^+,\dots,(2c-\frac{3}{2})^+\right)\left(
(\frac{1}{2}-2e)^+,\dots,(\frac{1}{2}-2d)^+,(\frac{3}{2}-2d)^-,\dots,\frac{-3}{2}^+,\frac{-1}{2}^{-} \right) &\xrightarrow{\omega} (w_0\theta,w_0\phi),\\
\left((\frac{3}{2}-2c)^+,\dots,(2f-\frac{1}{2})^+\right) \left((\frac{1}{2}-2e)^-,\dots, (\frac{1}{2}-2d)^-,(\frac{3}{2}-2d)^+,\dots,\frac{-3}{2}^-,\frac{-1}{2}^{+}\right) &\xrightarrow{\omega^h} (w_0\theta^h,w_0\phi^h)
\end{align*}
By a similar argument as in Step 1 - Step 3 in the proof of Proposition \ref{two_string_mult}, one can reduce to the cases when $N_{1/2} \longleftrightarrow
\begin{pmatrix}
 1  &  0\\
 1 &  0
\end{pmatrix} = \begin{pmatrix}
 1 \\
 1 
\end{pmatrix}$ (for $\omega$) and $\begin{pmatrix}
 0 & 0 \\
 1 & 1
\end{pmatrix}$ (for $\omega^h$), where one can verify the proposition by applying \texttt{atlas} as before. 
\end{proof}

\begin{corollary} \label{cor-bv}
Let $\pi$ be as given in Proposition \ref{V_case_odd}. Then the Hermitian form of $\pi$ is indefinite at the $K$-type $\mathcal{V}_{\eta(2c+1)}$.
\end{corollary}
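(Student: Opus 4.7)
The plan is to mirror the proof of Corollary \ref{cor-v}, the type D analogue of this statement. By Proposition \ref{V_case_odd}, $\pi$ is the lowest $K$-type subquotient of
\[\mathrm{Ind}_{A_{2(c+f)-1}\times B_{2(d+e)}}^{G}\big(J_A(A_{1/2} \cup A_{1/2}^h) \otimes J_B(D_{1/2} \cup D_{1/2}^h)\big),\]
with matching multiplicities on $\mathcal{V}_{\eta(i)}$ for all $i$. So it suffices to exhibit an indefinite Hermitian form on $\mathcal{V}_{\eta(2c+1)}$ in the induced module; an induction-in-stages argument (cf.\ \cite[Proposition 10.5]{V86}) then transports the obstruction down to $\pi$.

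The main computation is on the type A factor. Using \eqref{eq-complementary} with $A_{1/2} \longleftrightarrow \begin{pmatrix} c \\ f \end{pmatrix}$ and $f > c$, I would identify
\[J_A(A_{1/2} \cup A_{1/2}^h) \;\cong\; \mathrm{Ind}_{A_{c+f-1}\times A_{c+f-1}}^{A_{2(c+f)-1}}\!\left(\mathrm{comp}_{1/2}(c+f, f-c+\tfrac{1}{2})^h \otimes \mathrm{comp}_{1/2}(c+f, f-c+\tfrac{1}{2})\right)\]
as a twisted $t$-complementary series with $|t| = f-c+\tfrac{1}{2}$. Setting $q := f-c \geq 1$ and $a = c+f$, one has $q < |t| < q+1$, so Theorem \ref{thm-comp}(b) gives indefiniteness on the $\widetilde{U}(2(c+f))$-type with highest weight
\[(\tfrac{1}{2},\ldots,\tfrac{1}{2}) + (\underbrace{1,\ldots,1}_{2c+1},\,0,\ldots,0,\,\underbrace{-1,\ldots,-1}_{2c+1}),\]
noting that $a - q + 1 = 2c+1$. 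Under induction to $G$, this is precisely $\mathcal{V}_{\eta(2c+1)}$.

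To ensure the indefinite signature is genuinely preserved (and not cancelled by the type B tensor factor), I must verify that $J_B(D_{1/2} \cup D_{1/2}^h)$ is unitary. Writing $N_{1/2}$ in the canonical form of \eqref{eq-sigmab} gives descending chains $c \geq d$ and $e \geq f$; combined with $f > c$ this forces $e \geq f > c \geq d$, so the single column $D_{1/2} \longleftrightarrow \begin{pmatrix} d \\ e \end{pmatrix}$ satisfies $e+1 > d$, which is exactly the type B unitarity condition of Theorem \ref{Main_odd} (cf.\ Remark \ref{rmk-unipb}). The only (mild) obstacle I foresee is bookkeeping for boundary cases such as $c = 0$ or $d = 0$, but these fit into the same framework; the conclusion that $\pi$ has indefinite signature on $\mathcal{V}_{\eta(2c+1)}$ then follows.
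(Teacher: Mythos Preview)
Your proposal is correct and follows the same approach that the paper intends (the paper states Corollary~\ref{cor-bv} without proof, but it is the direct type~B analogue of Corollary~\ref{cor-v}, whose proof you mirror precisely). Your verification that $e \geq f > c \geq d$ forces $e+1 > d$, hence the type~B factor is unitary by Theorem~\ref{Main_odd}, plays exactly the role of the inequality chain $c \geq d > e+1 \geq f+1 > f$ in the proof of Corollary~\ref{cor-v}.
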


\subsection{General Case}
Similar to Section \ref{sec-comp}, one has the same results (with identical proofs) for induction of complementary series. Namely, one has the following result analogous to Proposition \ref{prop-nonunit2}:
\begin{proposition} \label{prop-nonunitb}
Let $\pi := J_{B_{2n}}(M_{1/2} \cup M_{1/2}^h,N_{1/2} \cup N_{1/2}^h)$, where
$$N_{1/2} \longleftrightarrow \begin{pmatrix} s_1 & \dots & s_p & * & s_{p+1} & \dots & s_q  \\ t_1 & \dots & t_p & ** & t_{p+1} & \dots & t_q \end{pmatrix}$$
satisfies
\begin{itemize}
\item $s_i - t_i = 0$ or $1$; and
\item $\begin{pmatrix} * \\ ** \end{pmatrix} = \begin{pmatrix} a \\ b \end{pmatrix}$ or $\begin{pmatrix} c & d \\ e & f \end{pmatrix}$ are as given in Corollary \ref{cor-bu} and Corollary \ref{cor-bv}.
\end{itemize}
Consider the induced module
\[
\mathrm{Ind}_{\prod_{i = 1}^{p} A_{2j_i-1} \times A_{2j-1} \times B_m}^{B_{2n}}\begin{pmatrix} \bigotimes_{i=1}^p \mathrm{Ind}_{A_{j_i-1}  \times A_{j_i-1}}^{A_{2j_i-1}}\left(\mathrm{comp}_{1/2}(j_i, \frac{1}{2}) \otimes \mathrm{comp}_{1/2}(j_i, \frac{1}{2})^h\right) \otimes \\
J_A (\widetilde{A}_{1/2} \cup \widetilde{A}_{1/2}^h)\ \otimes\   J_B(\widetilde{B}_{1/2} \cup \widetilde{B}_{1/2}^h)\end{pmatrix},
\]
where $j_i := s_i+t_i$, and 
\begin{align*}
\left(\widetilde{A}_{1/2},\widetilde{B}_{1/2}\right) &\longleftrightarrow \begin{cases}
    \left(\begin{pmatrix} a \\ b \end{pmatrix},\ \begin{pmatrix} s_{p+1} & \dots & s_q \\ t_{p+1} & \dots & t_q \end{pmatrix}\right) &\text{if}\ \begin{pmatrix} * \\ ** \end{pmatrix} = \begin{pmatrix} a \\ b \end{pmatrix}\\
    \left(\begin{pmatrix} c \\ f \end{pmatrix}, \begin{pmatrix} d & s_{p+1} & \dots & s_q \\ e & t_{p+1} & \dots & t_q \end{pmatrix}\right) &\text{if}\ \begin{pmatrix} * \\ ** \end{pmatrix} = \begin{pmatrix} c & d \\ e & f \end{pmatrix} \end{cases}
\end{align*}
Then $\pi$ and the above induced module have the same multiplicities for all $K$-types with highest weight $\eta(i)$ for all $0\leq i\leq n$.
\end{proposition}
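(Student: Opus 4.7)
The plan is to mirror the proof of Proposition \ref{prop-nonunit2}, obtaining Proposition \ref{prop-nonunitb} as the composition of type B analogs of Lemma \ref{case_first_is_comp} (induction by full Stein complementary series) and Lemma \ref{case_non_unitary_and_small_comp} (induction preserving a base Case I/II block). First I would formulate and prove the type B analog of Lemma \ref{case_first_is_comp}: given a parameter whose first block is a Stein block $\binom{s}{t}$ with $s-t=0$ or $1$, the module $\pi$ is the lowest $K$-type subquotient of the induced module obtained by splitting off $\mathrm{Ind}_{A_{s+t-1}\times A_{s+t-1}}^{A_{2(s+t)-1}}(\mathrm{comp}_{1/2}(s+t,\tfrac{1}{2})\otimes \mathrm{comp}_{1/2}(s+t,\tfrac{1}{2})^h)$ from the type B module corresponding to the remaining blocks, and the multiplicities of the spin-relevant $K$-types $\mathcal{V}_{\eta(i)}$ agree on both sides.

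For this I would again decompose the long intertwining operator $\omega$ (and its Hermitian dual $\omega^h$) into three pieces $\omega_3\circ\omega_2\circ\omega_1$ as in the type D proof: $\omega_1$ moves the Stein block past the remaining $\gamma_j^{\epsilon_j}$ coordinates, $\omega_2$ flips the positive half of the Stein block to its anti-dominant form on the far left, and $\omega_3$ finally permutes into $(w_0\theta,w_0\phi)$. Each $\omega_k$ is a composition of type A operators for which the hypotheses of Propositions \ref{inj_v_x} and \ref{type_A_inj} are satisfied (since the Stein parameters $\tfrac{1}{2}-2s$ and $2s-\tfrac{3}{2}$ stay outside the $\pm 2, \pm 3$ shifts of the remaining $\gamma_j$'s by the hypothesis $s \geq x_i, y_i$), together with possibly one short-root operator $\tau$ in \eqref{eq-tau}, which acts injectively on spin-relevant $K$-types provided the coordinate at stake is not $\pm\tfrac{3}{2}$. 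One then invokes Proposition \ref{non_red_of_phi} to rewrite the composite as a single $\omega$.

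Next I would prove the type B analog of Lemma \ref{case_non_unitary_and_small_comp}: if the block $\binom{*}{**}$ is a Case I block $\binom{a}{b}$ (Corollary \ref{cor-bu}) or a Case II block $\binom{c\ d}{e\ f}$ (Corollary \ref{cor-bv}), then $\pi$ and the induced module from $J_A(\widetilde{A}_{1/2}\cup \widetilde{A}_{1/2}^h)\otimes J_B(\widetilde{B}_{1/2}\cup \widetilde{B}_{1/2}^h)$ have the same multiplicities on all $\mathcal{V}_{\eta(i)}$. The proof uses the same ``$\Sigma,\Delta$''-decomposition that appeared in Step 1 of Proposition \ref{two_string_mult}, but now with an additional short-root step when the $D$-factor gets replaced by a $B$-factor. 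For Case I this is essentially Proposition \ref{U_case_odd} with extra $\gamma$'s carried along, and for Case II this is Proposition \ref{V_case_odd} with extra $\gamma$'s; the extra $\gamma$'s are transported across by Proposition \ref{inj_v_x} exactly as in Lemma \ref{case_non_unitary_and_small_comp}.

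Finally, Proposition \ref{prop-nonunitb} follows by iterating these two lemmas in stages: splitting off the $p$ Stein blocks one at a time using the first lemma, and then applying the second lemma to isolate the Case I/II block $\binom{*}{**}$ together with the trailing Stein blocks (which combine into $J_A(\widetilde{A}_{1/2}\cup \widetilde{A}_{1/2}^h)$ or the augmented $J_B$-factor as prescribed). I expect the main obstacle to be the careful bookkeeping of the short-root operator $\tau$: unlike type D, in type B the long intertwining operator necessarily crosses the short root, and we must ensure at each stage that the coordinate crossed by $\tau$ never takes the value $\pm\tfrac{3}{2}$ when restricted to $\mathcal{V}_{\eta(i)}$-isotropic spaces with $i \geq 1$. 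Tracking where $\pm\tfrac{3}{2}$ appears among the $\gamma_j$ and the Stein coordinates will require a case analysis paralleling the $\kappa,\zeta,\omega$ decomposition of Propositions \ref{single_mult} and \ref{two_string_mult}, but since the Stein block avoids these values by construction (its entries are either $\geq \tfrac{5}{2}$ or $\leq -\tfrac{5}{2}$ except at the central rungs, which are handled by Lemma \ref{lem-eta1}), the argument goes through as in the even case.
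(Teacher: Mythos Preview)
Your proposal is correct and follows essentially the same approach as the paper, which simply asserts that the proofs are ``identical'' to the type D case (Lemmas \ref{case_first_is_comp} and \ref{case_non_unitary_and_small_comp}, combined into Proposition \ref{prop-nonunit2}) and gives no further details. One small correction: your invocation of Lemma \ref{lem-eta1} for the ``central rungs'' is misplaced, since that lemma concerns only $\mathcal{V}_{\eta(1)}$ while Proposition \ref{prop-nonunitb} requires all $\mathcal{V}_{\eta(i)}$; the point is rather that in the type B analog of $\omega_2$, the short-root operator $\tau$ is applied only to the coordinates $\frac{1}{2}, \frac{5}{2}, \dots, 2s-\frac{3}{2}$ of the Stein block, none of which equals $\pm\frac{3}{2}$, so injectivity on all spin-relevant $K$-types follows directly from \eqref{eq-tau}.
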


In view of the proposition above, the non-unitarity proof for Type B is identical to that of type D -- namely, for any module $\pi$ in \eqref{eq-bhalf} not satisfying the hypothesis of Theorem \ref{Main_odd}, then by inducing some suitable complementary series to $\pi$, the $N_{1/2}$-parameter of the induced module will of the form given in Proposition \ref{prop-nonunitb}, whose lowest $K$-type irreducible subquotient is not unitary. This implies the induced module and $\pi$ are not unitary. \qed

\appendix

\section{Some atlas calcuations}
In this section, we finish the proof of Proposition \ref{two_string_mult} by the following:
\begin{lemma}\label{two_string_v2=-1/2_case}
Let $G = Spin(8,\mathbb{C})$, then Proposition \ref{two_string_mult} holds for the $(\mathfrak{g},K)$-modules $J(M_{1/2} \cup M_{1/2}^h, N_{1/2} \cup N_{1/2}^h)$, with
$$N_{1/2} \longleftrightarrow \begin{pmatrix}
2& 2\\ 0 & 0
\end{pmatrix},\quad 
\begin{pmatrix}
2 & 1\\  1 & 0
\end{pmatrix}.$$
\end{lemma}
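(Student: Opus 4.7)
The plan is a direct computational verification using the \texttt{atlas} software, since both parameters live in a fixed-rank group ($D_8$, corresponding to the $D_{2n}$ with $n = c+d+e+f = 4$ in both cases) and the statement is a numerical equality of $K$-type multiplicities on the finite set of spin-relevant $K$-types $\mathcal{V}_{\eta(i)}$, $0 \leq i \leq 4$. Everything ultimately reduces to two finite calculations, so the role of the appendix is essentially to record them.

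First I would translate each parameter into an explicit Langlands-Zhelobenko pair. For $N_{1/2}\longleftrightarrow\begin{pmatrix}2&2\\0&0\end{pmatrix}$, the table at the end of Section \ref{sec-main} already records
\[
(\lambda_L;\lambda_R)=\bigl(\tfrac{3}{2},\tfrac{3}{2},\tfrac{1}{2},\tfrac{1}{2},0,0,-1,-1\,;\,1,1,0,0,-\tfrac{1}{2},-\tfrac{1}{2},-\tfrac{3}{2},-\tfrac{3}{2}\bigr),
\]
and a completely analogous conversion from Section \ref{sec-main} produces the parameter for $\begin{pmatrix}2&1\\1&0\end{pmatrix}$.

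Next I would enter the two modules into \texttt{atlas}. For each parameter, using the \texttt{parameter} constructor I build $\pi=J(M_{1/2}\cup M_{1/2}^h,N_{1/2}\cup N_{1/2}^h)$, and with \texttt{branch\_irr} I read off the multiplicities of the five $K$-types $\mathcal{V}_{\eta(i)}$ with highest weight $(\underbrace{\tfrac{3}{2},\ldots,\tfrac{3}{2}}_{i},\underbrace{\tfrac{1}{2},\ldots,\tfrac{1}{2}}_{n-i-1},\tfrac{(-1)^{i}}{2})$ for $0\leq i\leq 4$, up to a height sufficient to cover $\eta(4)$. In parallel, I construct the induced module $\mathrm{Ind}_{A_{2(d+e)-1}\times D_{2(c+f)}}^{D_{2n}}\!\bigl(J_A(A_{1/2}\cup A_{1/2}^h)\otimes J_D(D_{1/2}\cup D_{1/2}^h)\bigr)$ using the \texttt{Induce\_irreducible} routine from the appropriate theta-stable Levi (for both parameters the Levi is $A_3\times D_4$), and use \texttt{branch\_std} together with \texttt{composition\_series} to extract $K$-type multiplicities on the same set of $\mathcal{V}_{\eta(i)}$.

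Finally I would tabulate the two sets of five numbers for each parameter and verify that they agree. The main obstacle, such as it is, is purely bookkeeping: matching the atlas realization of $D_8$ with the conventions fixed in Section \ref{sec-mainsec} (in particular the choice of simple roots and the cover $Spin\to SO$), and making sure that the induced representation is constructed from the correct Levi with the intended inducing data on each factor. Because both computations are in rank $8$, the calculation is well within atlas's capacity, and no conceptual input beyond the formulas already collected in the proofs of Propositions \ref{single_mult} and \ref{two_string_mult} is required.
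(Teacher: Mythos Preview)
Your plan is correct in principle, but it misses the simplification that the paper builds into the very statement of the lemma. Note that the lemma says $G=Spin(8,\mathbb{C})$, i.e.\ $D_4$, not $D_8$. The point is that the proof of Proposition~\ref{two_string_mult} proceeds by studying the operators $\omega$ and $\omega^h$ on the two \emph{halves} $J_{D_n}(M_{1/2},N_{1/2})$ and $J_{D_n}(M_{1/2}^h,N_{1/2}^h)$ separately, using \eqref{eq-dd} to pass back to $D_{2n}$. Thus what actually needs to be checked is that, in $D_4$, the irreducible module $J(M_{1/2},N_{1/2})$ and the induced module $\mathrm{Ind}_{A_{d+e-1}\times D_{c+f}}^{D_n}(J_A\otimes J_D)=\mathrm{Ind}_{A_1\times D_2}^{D_4}\bigl(J(\tfrac{5}{2}^+,\tfrac{1}{2}^+)\otimes J(\tfrac{5}{2}^+,\tfrac{1}{2}^+)\bigr)$ (and the companion module for the second parameter) share the same multiplicities on the $\mathcal{V}_{\eta(i)}$. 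The paper therefore runs \texttt{atlas} in $Spin(8,\mathbb{C})$, a rank-$4$ calculation, and gets the induced-module side for free by Frobenius reciprocity (a finite branching of $\mathcal{V}_{\eta(i)}$ to the Levi), with no need to realize the induced module in \texttt{atlas} at all.

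Your approach works in $D_8$ with the full module $J(M_{1/2}\cup M_{1/2}^h,\,N_{1/2}\cup N_{1/2}^h)$ and the full $A_3\times D_4$ induced module, and it would also establish the lemma; but it is a rank-$8$ computation on both sides where a rank-$4$ one (plus elementary branching) suffices. Incidentally, the induction here is real parabolic induction, not cohomological induction from a $\theta$-stable Levi, so the relevant \texttt{atlas} routine would be the real-induction one rather than \texttt{Induce\_irreducible}; this is exactly the bookkeeping hazard you anticipate, and the paper avoids it entirely.
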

\begin{proof}
This can be done by direct calculations similar to that in Appendix A of \cite{BW24}. Alternatively one can
verify it by \texttt{atlas}. We present the case of $N_{1/2} = (\frac{5}{2},\frac{1}{2},\frac{5}{2},\frac{1}{2}) \longleftrightarrow \begin{pmatrix}
2& 2\\ 0 & 0
\end{pmatrix}$. 

In this case, the induced module in the proposition is
$$\mathrm{Ind}_{A_1 \times D_2}^{D_4}\left(J\left(\frac{5}{2}^+,\frac{1}{2}^+\right) \otimes J\left(\frac{5}{2}^+,\frac{1}{2}^+\right)\right)$$
and its multiplicities for the spin-relevant $K$-types $\mathcal{V}_{\eta(i)}$ can be obtained by Frobenius reciprocity:
$$[\mathcal{V}_{\eta(i)}:J(M_{1/2},N_{1/2})] = 1\ (0 \leq i \leq 4,\ i \neq 2), \quad \quad
[\mathcal{V}_{\eta(2)}:J(M_{1/2},N_{1/2})] = 2.$$ 

On the other hand, \texttt{atlas} gives the multiplicities of $\mathcal{V}_{\eta(i)}$ for $J(M_{1/2},N_{1/2})$. More explicitly, the \texttt{atlas} parameter of $J(M_{1/2},N_{1/2})$ is given by:
\begin{verbatim}
set G = complexification(Spin(8,0))
set p = parameter(G,163,[-1,4,-1,0,1,1,1,1]/1,[0,2,0,1,0,3,0,-1]/2))
\end{verbatim}
One can check the $(\mu,\nu)$-parameter of \texttt{p} is:
\begin{center}
\texttt{mu\_C(p)} $= [0,0,0,1] = (\frac{1}{2}, \frac{1}{2}, \frac{1}{2}, \frac{1}{2})$;\quad
\texttt{nu\_C(p)} $= [0,2,0,1] = (\frac{5}{2}, \frac{5}{2}, \frac{1}{2}, \frac{1}{2}).$
\end{center}
Now we look at some $K$-types of \texttt{p}:
\begin{verbatim}
atlas> branch_irr(p,18)
Value:
1* K_type(x=0, lambda=[0,0,0,0,0,0,0,1]/1) [6]
1* K_type(x=0, lambda=[0,0,0,0,1,0,1,0]/1) [12]
2* K_type(x=0, lambda=[0,0,0,0,0,1,0,1]/1) [16]
1* K_type(x=0, lambda=[0,0,0,1,0,0,0,2]/1) [18]
1* K_type(x=0, lambda=[0,0,1,0,0,0,1,1]/1) [18]
\end{verbatim}
So the result follows.
\end{proof}
For completeness, we record all \texttt{atlas} parameters for the modules needed for the proofs of the propositions in the previous sections:
\begin{itemize}
    \item $G = Spin(8,\mathbb{C})$:
    \begin{center} $N_{1/2} \longleftrightarrow \begin{pmatrix} 2 & 1 \\ 1& 0\end{pmatrix}$: \texttt{parameter(G,118,[2,0,2,-1,0,2,0,0],[2,0,2,-1,-1,3,-1,0]/2)}\end{center}
    \item $G = Spin(5,\mathbb{C})$:
    \begin{align*}
    N_{1/2} \longleftrightarrow \begin{pmatrix} 2 \\ 0\end{pmatrix}:\ &\texttt{parameter(G,5,[2,0,1,1]/1,[2,1,3,-1]/2)} \\ 
    N_{1/2} \longleftrightarrow \begin{pmatrix} 1 \\ 1 \end{pmatrix}:\ &\texttt{parameter(G,3,[2,-2,1,1]/1,[2,-1,-1,3]/2)} \\ 
N_{1/2} \longleftrightarrow \begin{pmatrix} 0 & 0\\ 1& 1\end{pmatrix}:\ &\texttt{parameter(G,6,[-1,3,1,2]/1,[0,3,0,3]/2)}
    \end{align*}
\end{itemize}

\section{Unipotent representations} \label{sec-unipotent}
As mentioned in the introduction, \cite{B89} and \cite{B17} considered a larger
class of representations called {\bf unipotent representations} for all classical groups, which generalizes the notion of special unipotent representations given in \cite{BV85}. More precisely,
for each classical nilpotent orbit $\mathcal{O}$, an element $\lambda_{\mathcal{O}} \in \mathfrak{h}^*$ is assigned to $\mathcal{O}$. Then $\pi$ is a unipotent representation attached to $\mathcal{O}$ if it has infinitesimal character $(\lambda_{\mathcal{O}}, \lambda_{\mathcal{O}})$, and its annihilator ideal $\mathrm{Ann}(\pi) \subseteq U(\mathfrak{g})$ is maximal subject to all irreducible representations with infinitesimal character
$(\lambda_{\mathcal{O}}, \lambda_{\mathcal{O}})$.
For instance, the oscillator representations in $Sp(2n,\mathbb{C})$ are (non-special) unipotent representations attached to the minimal nilpotent orbit.

\medskip
In this section, we only deal with genuine unipotent representations of type D (the results for type B is stated in \ref{rmk-unipb}). Using \cite[Section 2.8]{B17}, one can directly check the following holds:
\begin{lemma} \label{lem-b1}
Let $N_{1/2} \longleftrightarrow \begin{pmatrix} x_1 & x_2 & \dots & x_k \\ y_1 & y_2 & \dots & y_k \end{pmatrix}$, where
$$x_i > y_i\quad \text{and} \quad y_i + 1 > x_{i+1}$$
satisfies the hypothesis of Remark \ref{rmk-brega}. Then $\lambda_L  = \left(\frac{1}{2}(M_{1/2} + N_{1/2}); \frac{1}{2}(M_{1/2}^h + N_{1/2}^h)\right)$ is a Weyl conjugate of $\lambda_{\mathcal{B}}$, where
\[\mathcal{B} = \mathop{\bigcup}_{i=1}^k(2x_i,2x_i-1,2y_i+1,2y_i)\]
is a nilpotent orbit of $\mathfrak{so}(4n,\mathbb{C})$ (here a nilpotent orbit is described by the column sizes
of its corresponding partition).
\end{lemma}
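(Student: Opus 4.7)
The plan is to verify the claimed Weyl-equivalence by direct computation: write out both $\lambda_L$ and $\lambda_{\mathcal{B}}$ as explicit tuples of half-integers, and match them up as multisets modulo $W(D_{2n})$ (which acts by arbitrary permutations together with an even number of sign flips).

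For $\lambda_L$, since $\mu = M_{1/2} \sqcup M_{1/2}^h$ has all coordinates equal to $\tfrac12$ and $N_{1/2}^h = -N_{1/2}$ (as $\nu$ is real), the formula $\lambda_L = \tfrac12(\mu+\nu)$ reduces to the concatenation of $\tfrac12(M_{1/2} + N_{1/2})$ and $\tfrac12(M_{1/2} - N_{1/2})$. Unpacking $N_{1/2}$ block-by-block using \eqref{eq-strings}, the $i$-th block $\begin{pmatrix} x_i \\ y_i \end{pmatrix}$ contributes
\[\bigl(x_i - \tfrac12,\ x_i - \tfrac32,\ \ldots,\ \tfrac12,\ -\tfrac12,\ \ldots,\ -y_i + \tfrac12\bigr) \sqcup \bigl(-x_i + \tfrac12,\ \ldots,\ -\tfrac12,\ \tfrac12,\ \ldots,\ y_i - \tfrac12\bigr)\]
to $\lambda_L$, totalling $2(x_i + y_i)$ coordinates which sum correctly to $2n$ across all blocks.

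For $\lambda_{\mathcal{B}}$, one invokes the recipe in \cite[Section 2.8]{B17}, which prescribes how to read off the infinitesimal character attached to an orbit from the column sizes of its partition by producing, for each column of size $c$, an explicit string of half-integers built from the Jacobson--Morozov neutral element. Applied block-by-block to the four columns $(2x_i,\ 2x_i-1,\ 2y_i+1,\ 2y_i)$, this recipe produces a multiset of half-integers of the same cardinality $2(x_i+y_i)$; the task is then to verify that, block by block, this multiset coincides with the $\lambda_L$-contribution computed above.

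The main (and in fact the only) obstacle is the bookkeeping: pinning down precisely which half-integer string each column of $\mathcal{B}$ contributes under the convention of \cite[Section 2.8]{B17}, and aligning this with the string-labelling of \eqref{eq-strings}. Once this alignment is made the matching is mechanical, block by block, and the $W(D_{2n})$-action absorbs any residual ordering and sign discrepancies --- the parity constraint on sign flips is automatic since both multisets are $\pm$-symmetric by construction.
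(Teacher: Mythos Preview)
Your approach is exactly what the paper does: it simply says ``using \cite[Section 2.8]{B17}, one can directly check the following holds'' and leaves the verification to the reader. So the strategy of writing out both sides and matching multisets up to $W(D_{2n})$ is the intended one.

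Two concrete slips, however. First, your second block is miscomputed: since each coordinate of $N_{1/2}$ in the $i$-th block is $\tfrac12 + 2j$ for $j = x_i-1,\dots,-y_i$, the coordinates of $\tfrac12(M_{1/2}-N_{1/2})$ are $\tfrac12\bigl(\tfrac12 - (\tfrac12+2j)\bigr) = -j$, i.e.\ the \emph{integers} $(1-x_i,\dots,-1,0,1,\dots,y_i)$, not the half-integers you wrote. (Compare the paper's Example in the appendix, where for $\begin{pmatrix} n\\0\end{pmatrix}$ one gets $\lambda_L = (\tfrac{2n-1}{2},\dots,\tfrac12;\,0,-1,\dots,-(n-1))$.) This is harmless for the argument but should be fixed.

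Second, your claim that the $W(D_{2n})$ parity constraint is automatic ``since both multisets are $\pm$-symmetric'' is not correct: neither $\lambda_L$ nor $\lambda_{\mathcal{B}}$ is $\pm$-symmetric in general (for instance the half-integer part of $\lambda_L$ has $x_i$ positive and $y_i$ negative entries in each block). The parity issue is nonetheless vacuous, but for a different reason: the integer part of $\lambda_L$ contains a $0$ from each block, and once a zero coordinate is present the sign-flip parity constraint in $W(D_{2n})$ disappears. You should replace the symmetry justification with this observation.
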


By Remark \ref{rmk-brega}, the above lemma implies that all Brega representations
are of the form $J_{D_{2n}}(M_{1/2} \cup M_{1/2}^h, N_{1/2} \cup N_{1/2}^h) = J_{D_{2n}}(\lambda_{\mathcal{B}};w\lambda_{\mathcal{B}})$
for some Weyl group element $w$. As discussed above, one can conclude that
it is a unipotent representation if the following holds:
\begin{theorem}
Suppose $N_{1/2} $ satisfies the hypothesis of Remark \ref{rmk-brega}, so that
$$\pi = J_{D_{2n}}(M_{1/2} \cup M_{1/2}^h, N_{1/2} \cup N_{1/2}^h) = \mathrm{Ind}_{D_n \times D_n}^{D_{2n}}\left(J_{D_{n}}(M_{1/2},N_{1/2}) \otimes J_{D_{n}}(M_{1/2}^h, N_{1/2}^h)\right).$$
is a Brega representation. Then its associated variety is equal to $AV(\mathrm{Ann}(\pi)) = \overline{\mathcal{B}}$, where $\mathcal{B}$ is as given in Lemma \ref{lem-b1}.
\end{theorem}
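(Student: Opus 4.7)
The plan is to bound $AV(\mathrm{Ann}(\pi))$ above and below by $\overline{\mathcal{B}}$. The upper bound exploits the infinitesimal character: by Lemma \ref{lem-b1}, $\pi$ has infinitesimal character $\lambda_{\mathcal{B}}$, so $\mathrm{Ann}(\pi)$ is a primitive ideal in $U(\mathfrak{so}(4n,\mathbb{C}))$ of this central character. Joseph's classification of primitive ideals via Kazhdan--Lusztig left cells, combined with Lusztig's bijection between two-sided cells and special orbits, shows that $AV(\mathrm{Ann}(\pi))$ is the closure of a single special nilpotent orbit. Barbasch's analysis in \cite[Section 2.8]{B17} identifies the maximal such orbit with infinitesimal character $\lambda_{\mathcal{B}}$ as $\mathcal{B}$ itself (under the strict inequalities $x_i > y_i \geq x_{i+1}$ of Remark \ref{rmk-brega}), giving $AV(\mathrm{Ann}(\pi)) \subseteq \overline{\mathcal{B}}$.

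For the reverse inclusion $AV(\mathrm{Ann}(\pi)) \supseteq \overline{\mathcal{B}}$, the approach is to leverage Brega's explicit cohomological construction of $\sigma := J_{D_n}(M_{1/2}, N_{1/2})$ in \cite{Br99}. There $\sigma$ is realized as a derived functor module $\mathcal{A}_{\mathfrak{q}}(\lambda)$ from a $\theta$-stable parabolic whose Levi is of type $\prod_i \mathfrak{gl}(x_i+y_i,\mathbb{C})$; the Vogan--Zuckerman formula then identifies $AV(\mathrm{Ann}(\sigma))$ as the closure of the Richardson orbit of this parabolic in $\mathfrak{so}(2n,\mathbb{C})$, whose partition can be read off directly from the Levi data. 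Combining this with the virtual induction identity $\pi = \mathrm{Ind}_{D_n \times D_n}^{D_{2n}}(\sigma \otimes \sigma^h)$ from \eqref{eq-dd} and the Lusztig--Spaltenstein formula for inducing nilpotent orbits, the induced partition in $\mathfrak{so}(4n,\mathbb{C})$ will have columns precisely $\bigcup_i (2x_i, 2x_i-1, 2y_i+1, 2y_i)$, matching $\mathcal{B}$.

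The main obstacle is that \eqref{eq-dd} is only a virtual equality at the character level rather than a genuine parabolic induction, since $\mathfrak{so}(2n) \oplus \mathfrak{so}(2n)$ is not a Levi subalgebra of $\mathfrak{so}(4n)$, so standard induction formulas for annihilator varieties do not apply directly. A workaround is to realize $\pi$ as the lowest $K$-type subquotient of a genuine type A parabolic induction, as in the proof of Proposition \ref{single_mult}, and apply known induction formulas in that setting instead. The combinatorial verification that the resulting induced partition matches the column pattern of $\mathcal{B}$ (with the type D collapse rules correctly handled) is the central technical step, but the strict inequalities in Remark \ref{rmk-brega} fortunately ensure that no nontrivial collapse occurs, so the partition identity can be checked directly from the Lusztig--Spaltenstein recipe.
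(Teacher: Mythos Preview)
Your two bounding arguments have the directions reversed, and the proposed workaround does not close the resulting gap.

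For the first bound: having infinitesimal character $\lambda_{\mathcal{B}}$ constrains $AV(\mathrm{Ann}(\pi))$ from \emph{below}, not above. There exist irreducible modules with this infinitesimal character whose associated variety is strictly larger than $\overline{\mathcal{B}}$ (for instance, generic constituents of the principal series). What \cite[Corollary 5.18]{BV85} actually gives is that $\overline{\mathcal{B}}$ is the associated variety of the \emph{maximal} primitive ideal at this infinitesimal character, so every primitive ideal with this central character has $AV \supseteq \overline{\mathcal{B}}$. This is precisely the lower bound the paper invokes, and it is the easy direction.

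For the second bound: you correctly identify that $\mathfrak{so}(2n) \oplus \mathfrak{so}(2n)$ is not a Levi of $\mathfrak{so}(4n)$, but the type-$A$ workaround fails for the direction you need. Realizing $\pi$ as a subquotient of $\mathrm{Ind}_{A_{2n-1}}^{D_{2n}}(J_A)$ yields only $\mathrm{Ann}(\pi) \supseteq \mathrm{Ann}(\mathrm{Ind})$, hence $AV(\mathrm{Ann}(\pi)) \subseteq AV(\mathrm{Ann}(\mathrm{Ind}))$ --- an upper bound, not the lower bound you were aiming for. Even viewed as an upper bound it is not obviously sharp: in the Brega regime the type-$A$ induced module is strictly larger than $\pi$ (this is exactly the point of the example following Proposition \ref{single_mult}), and you have not computed the Lusztig--Spaltenstein induced orbit from $\mathfrak{gl}(2n)$ to $\mathfrak{so}(4n)$ to check it equals $\mathcal{B}$.

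The paper handles the non-Levi obstacle by passing to the Springer side. Truncated induction $j_{W_0}^{W}$ is defined for \emph{any} subgroup $W_0 \subseteq W$, not just parabolic ones, and by definition preserves fake degree, which equals the codimension of the corresponding orbit in the nilpotent cone. The Kazhdan--Lusztig theory at non-integral infinitesimal character (specifically \cite[Theorem 5.1]{McG94}) justifies the identity
\[
\mathrm{Spr}\bigl(AV(\mathrm{Ann}(\pi))\bigr) \;=\; j_{W(D_n) \times W(D_n)}^{W(D_{2n})}\bigl(\mathrm{Spr}(\mathcal{P}) \otimes \mathrm{Spr}(\mathcal{P})\bigr),
\]
where $\mathcal{P} = \bigcup_i (2x_i, 2y_i)$ is the orbit attached to $\sigma = J_{D_n}(M_{1/2},N_{1/2})$ by \cite[Lemma 3.1]{Br99}. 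A direct dimension count (via \cite[Theorem 6.1.3]{CM93}) then shows $\dim AV(\mathrm{Ann}(\pi)) = \dim \mathcal{B}$; combined with the lower bound $AV(\mathrm{Ann}(\pi)) \supseteq \overline{\mathcal{B}}$, equality follows. This truncated-induction-plus-dimension mechanism is the key idea your proposal is missing.
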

\begin{proof}
For all nilpotent orbits $\mathcal{O}$, let $\mathrm{Spr}(\mathcal{O})$ be its corresponding Weyl group representation under the Springer correspondence. In particular, the {\it fake degree}
of $b(\mathrm{Spr}(\mathcal{O}))$ of $\mathrm{Spr}(\mathcal{O})$, i.e. the smallest degree in which $\mathrm{Spr}(\mathcal{O})$ appears in the harmonic polynomials on the Cartan subalgebra $\mathfrak{h}$ is equal to the {\it co}-dimension $\dim(\mathcal{N}) - \dim(\mathcal{O})$ of the nilpotent cone $\mathcal{N}$ of $\mathfrak{g}$.

By Lemma 3.1 of \cite{Br99}, the associated varieties of the annihilators of
$J_{D_{n}}(M_{1/2}, N_{1/2})$ and $J_{D_{n}}(M_{1/2}^h, N_{1/2}^h)$
are both equal to the closure of the special nilpotent orbit $\mathcal{P} := \mathop{\bigcup}_{i=1}^k(2x_i,2y_i)$.
By the Kazhdan-Lusztig conjectures for non-integral infinitesimal characters (for instance \cite[Theorem 5.1]{McG94}), the character theory
(and hence the associated variety) of $\pi = J_{D_{2n}}(M_{1/2} \cup M_{1/2}^h, N_{1/2} \cup N_{1/2}^h)$
can be obtained from that of $J_{D_{n}}(M_{1/2}, N_{1/2})$ and $J_{D_{n}}(M_{1/2}^h, N_{1/2}^h)$. In particular, the associated variety
of $\pi$ satisfies
\begin{equation} \label{eq-av}
\mathrm{Spr}(AV(\mathrm{Ann}(\pi))) = j_{D_n \times D_n}^{D_{2n}}\left(\mathrm{Spr}(\mathcal{P}) \otimes \mathrm{Spr}(\mathcal{P})\right),\end{equation}
where $j_{W_0}^{W}$ is the {\it truncated induction}, which (by definition) preserves the fake degree of representations.

\medskip
We are now in the position to prove the theorem: By \cite[Corollary 5.18]{BV85}, $AV(\pi)$ is bounded below by
$\overline{\mathcal{B}}$. Also, \eqref{eq-av} and the first paragraph of the proof imply that
\begin{align*}
\dim(\mathcal{N}_{\mathfrak{so}(4n,\mathbb{C})}) - \dim(AV(\mathrm{Ann}(\pi))) &=  b(\mathrm{Spr}(AV(\mathrm{Ann}(\pi)))) \\
&= b(\mathrm{Spr}(\mathcal{P})) + b(\mathrm{Spr}(\mathcal{P}))\\
&= 2\left(\dim(\mathcal{N}_{\mathfrak{so}(2n,\mathbb{C})}) - \dim(\mathcal{P})\right)
\end{align*}
On the other hand, by the formula of the dimension of nilpotent orbits \cite[Theorem 6.1.3]{CM93}, we know that
\begin{align*}
\dim(\mathcal{B}) = & \ \frac{4n(4n-1)}{2}-\Big(\frac{1}{2}\sum_{i=1}^k\big((2x_i)^2+(2x_i-1)^2+(2y_i+1)^2+(2y_i)^2\big)-\frac{1}{2}\sum_{i=1}^k(1+1)\Big), \\
\dim(\mathcal{P})= & \  \frac{2n(2n-1)}{2}-\Big(\frac{1}{2}\sum_{i=1}^k\big((2x_i)^2+(2y_i)^2\big)-\frac{1}{2}\sum_{i=1}^k\big(2x_i-2y_i\big)\Big),
\end{align*}
Using the above formulas, one can check directly that
$$\dim(\mathcal{N}_{\mathfrak{so}(4n,\mathbb{C})}) -\dim(\mathcal{B}) =2\left(\dim(\mathcal{N}_{\mathfrak{so}(2n,\mathbb{C})})-\dim(\mathcal{P})\right).$$
So $\dim(AV(\mathrm{Ann}(\pi))) = \dim(\mathcal{B})$, and hence $AV(\mathrm{Ann}(\pi)) = \overline{\mathcal{B}}$ attains its lower bound.
\end{proof}

\begin{example}
Let $N_{1/2} \longleftrightarrow \begin{pmatrix} n \\ 0 \end{pmatrix}$. Then
\begin{align*} \lambda_L  &= \frac{1}{2}\left((\frac{1}{2}, \dots,\frac{1}{2}, \frac{1}{2})+(\frac{4n-3}{2}, \dots,\frac{5}{2}, \frac{1}{2});\ ((\frac{1}{2}, \dots,\frac{1}{2}, \frac{1}{2})+(\frac{-(4n-3)}{2}, \dots,\frac{-5}{2}, \frac{-1}{2}) \right) \\
&= \left(\frac{2n-1}{2}, \frac{2n-3}{2}, \dots, \frac{1}{2};\ 0, -1, \dots, -(n-1)\right) \sim \frac{\rho}{2},
\end{align*}
where $\rho = \rho_{\mathfrak{so}(4n,\mathbb{C})}$ is half the sum of all positive roots of $\mathfrak{so}(4n,\mathbb{C})$. By Lemma \ref{lem-b1}, $\lambda_L = \lambda_{\mathcal{B}}$, where $\mathcal{B} = [2n,2n-1,1]$ is the {\bf model orbit} of dimension $4n^2$, and
$$J_{D_{2n}}(M_{1/2} \cup M_{1/2}^h, N_{1/2} \cup N_{1/2}^h) = \mathrm{Ind}_{D_n \times D_n}^{D_{2n}}\left(J_{D_{n}}(M_{1/2}, N_{1/2}) \otimes J_{D_{n}}(M_{1/2}^h, N_{1/2}^h)\right) = J_{D_{2n}}(\frac{\rho}{2}; \frac{w\rho}{2})$$
for some $w \in W(D_{2n})$.

On the other hand,
the representations $J_{D_n}(M_{1/2}, N_{1/2}) = S^+ \otimes \mathrm{triv}$ and $J_{D_n}(M_{1/2}^h, N_{1/2}^h)  = \mathrm{triv} \otimes S^+$ are both finite-dimensional, both having a single $K$-type equal to the spinor module $S^+$. Therefore, their associated varieties $\mathcal{P}$ are both the zero orbit. Consequently,
$$\dim(\mathcal{N}_{\mathfrak{so}(4n,\mathbb{C})}) - \dim(\mathcal{B}) = \frac{4n(4n-1)}{2} - 4n^2 = 2\left(\dim(\mathcal{N}_{\mathfrak{so}(2n,\mathbb{C})}) - \dim(\mathcal{P})\right)$$
and hence $AV(\mathrm{Ann}(\pi)) = \overline{\mathcal{B}} = [2n,2n-1,1]$.
\end{example}

\end{document}